\definecolor{webgreen}{rgb}{0,.5,0}
\definecolor{webbrown}{rgb}{.6,0,0}
\definecolor{RoyalBlue}{cmyk}{1, 0.50, 0, 0}
\newcommand{\R}     {\mathbb{R}}
\newcommand{\N}     {\mathbb{N}}
\newcommand{\Z}     {\mathbb{Z}}
\newcommand{\cws}{\stackrel{*}{\to}}
\newcommand{\supp}{\mathrm{supp}}
\newcommand{\diag}{\mathrm{diag}}
\renewcommand{\arg}{\mathrm{arg}}
\renewcommand{\det}{\mathrm{det}}
\newcommand{\dd}{\mathrm{d}}
\newcommand{\ic}{\mathrm{i}}
\newcommand{\RS}{\boldsymbol{\mathfrak{R}}}
\newcommand{\z} {{\boldsymbol z}}
\newcommand{\n} {{\vec n}}
\newcommand{\vc}    {{\vec c}}
\def\cal{\mathcal}
\let\Re=\undefined
\DeclareMathOperator{\Re}{Re}
\let\Im=\undefined
\DeclareMathOperator{\Im}{Im}
\def\ge{\geqslant}
\def\le{\leqslant}
\newtheorem{theorem}{Theorem}[section]
\newtheorem{proposition}[theorem]{Proposition}
\newtheorem{corollary}[theorem]{Corollary}
\newtheorem{lemma}[theorem]{Lemma}
\newtheorem{definition}[theorem]{Definition}
\theoremstyle{remark}
\newtheorem*{remark}{Remark}
\numberwithin{equation}{section}
\begin{document}

\title[Multiple orthogonal polynomials and Jacobi matrices on trees]{Self-adjoint Jacobi matrices on trees and \\ multiple orthogonal polynomials }

\author[A.I. Aptekarev]{Alexander I. Aptekarev}
\address{Keldysh Institute of Applied Mathematics, Russian Academy of Science, Moscow, Russian Federation}
\email{\href{mailto:aptekaa@keldysh.ru}{aptekaa@keldysh.ru}}

\thanks{The research of the first author was carried out with support from a grant of the Russian Science Foundation (project RScF-14-21-00025).
The work of the second author done in the last section
 of the paper was supported by a grant of the Russian Science Foundation (project RScF-14-21-00025)
 and his research
on the rest of the paper was supported by the grant NSF-DMS-1464479 and Van Vleck Professorship Research Award.
The research of the third author was supported in part by a grant from the Simons Foundation, CGM-354538.}

\author[S. Denisov]{Sergey A. Denisov}
\address{Department of Mathematics, University of Wisconsin-Madison, 480 Lincoln Dr., Madison, WI 53706, USA}
\email{\href{mailto:denissov@math.wisc.edu}{denissov@math.wisc.edu}}

\author[M. Yattselev]{Maxim L. Yattselev}
\address{Department of Mathematical Sciences, Indiana University-Purdue University Indianapolis, 402~North Blackford Street, Indianapolis, IN 46202, USA}
\email{\href{mailto:maxyatts@math.iupui.edu}{maxyatts@math.iupui.edu}}

\begin{abstract}
We consider a set of measures on the real line and the
corresponding system of multiple orthogonal polynomials (MOPs) of the
first and second type. Under some very mild assumptions, which are
satisfied by Angelesco systems, we define self-adjoint Jacobi
matrices on certain rooted trees. We express their Green's functions and
the matrix elements in terms of MOPs. This provides a generalization of the well-known connection between the theory of polynomials orthogonal on the real line and Jacobi matrices on $\mathbb{Z}_+$  to higher dimension. We illustrate importance of this connection by proving ratio asymptotics for MOPs using methods of operator theory.
\end{abstract}

\subjclass{}

\keywords{}

\maketitle

\setcounter{tocdepth}{3}
\tableofcontents

\section{Introduction}

The theory of polynomials orthogonal on the real line is known to
play an important role in the spectral theory of Jacobi matrices. In
this paper, we show that the theory of multiple orthogonal
polynomials (MOPs) is related to the spectral theory of Jacobi
matrices on rooted trees. We will start this introduction by
recalling definition and main properties of MOPs.

\subsection{Multiple orthogonal polynomials}
\label{S1.1}

In what follows we shall set $\N:=\{1,2,\ldots\}$ and $\Z_+:=\{0,1,2\ldots\}$. Consider a vector $$\vec{\mu}:=(\mu_1,\ldots,\mu_d), \quad d\in\N,$$  of positive finite Borel measures defined on $\mathbb{R}$ and let $$\vec{n}:=(n_1,\ldots,n_d)\in\mathbb{Z}_+^d,\quad |\vec{n}|:=\sum\limits_{j=1}^dn_j.$$
In this paper, we always assume that  $\supp \,\mu_j$ is not a finite set of points and that
\[
\int_{\mathbb{R}} x^ld\mu_j(x)<\infty
\]
for every $j\in \{1,\ldots,d\}$ and every $l\in \Z_+$.

\begin{definition}
Polynomials $\big\{A_{\vec{n}}^{(j)}\big\}_{j=1}^d$ that satisfy
\begin{eqnarray}
\deg A_n^{(j)}\leqslant n_j-1\quad\nonumber  {\rm\it for\,\, all\,\,} j\in \{1,\ldots,d\} \\
\int_{\mathbb{R}}\sum\limits_{j=1}^d A_{\vec{n}}^{(j)}(x)x^{l}d\mu_j(x)=0
\label{1.1} \quad  {\rm\it for\,\, all\,\,} l\in \{0,\ldots, |\vec{n}|-2\}
\end{eqnarray}
are called type I multiple orthogonal polynomials.
\end{definition}

\begin{remark}
In the definition above, we let $A_n^{(j)}=0$ if $n_j-1<0$.
\end{remark}

\begin{definition}
Polynomial $P_{\vec{n}}$ is called type II multiple orthogonal polynomial if it  satisfies
\begin{eqnarray}
 \deg P_{\vec{n}}\leqslant|\vec{n}|,\nonumber\\
 \label{1.2}
\int_{\mathbb{R}} P_{\vec{n}}(x)x^ld\mu_j(x)=0
\quad  {\rm\it for\,\, all\,\,} j\in \{1,\ldots,d\}\,\,{\rm \it and\,\, all} \,\,  l\in \{0,\ldots, n_j-1\}\,.
\end{eqnarray}
\end{definition}
Orthogonality relations  \eqref{1.1} and \eqref{1.2} define enough linear homogeneous equations to determine the coefficients of $A_\n^{(j)}$ and $P_\n$. Thus, polynomials of the first and second type always exist. The question of uniqueness is more involved. If $ P_\n$ is defined uniquely up to a constant, then the multi-index $\vec{n}$ is called normal and we choose the following normalization
\[
 P_{\vec{n}}(x)=x^{|\vec{n}|}+\cdots\,,
\]
i.e., the polynomial $ P_{\vec{n}}$ is monic. It turns out that $\vec{n}$ is normal  if and only if the following linear form
\begin{equation}\label{sad1}
Q_{\vec{n}}(x):=\sum\limits_{j=1}^d A_{\vec{n}}^{(j)}(x)d\mu_j(x)
\end{equation}
is defined uniquely up to multiplication by a constant. In this case, we will normalize the polynomials of the first type by
\begin{equation}\label{n_2}
\int_{\mathbb{R}} x^{|\vec{n}|-1}Q_{\vec{n}}(x)=1\,.
\end{equation}
Following Mahler \cite{Mah68}, we shall say that
\begin{definition}
The vector $\vec{\mu}$  is called perfect if all the multi-indices $\vec{n}\in \mathbb{Z}_+^d$ are normal.
\end{definition}

Together with the multiple orthogonal polynomials we shall also need their functions of the second kind.
\begin{definition}
\label{def:1.3}
Functions \( \big\{ R_\n^{(j)}\big\} \) defined by
\begin{equation}
\label{Rnj}
R_\n^{(j)}(z) :=  \int\frac{P_\n(x)}{z-x}d\mu_j(x), \quad j\in\{1,\ldots,d\},
\end{equation}
are called the functions of the second kind associated to the polynomial \( P_\n \). Similarly,
\begin{equation}
\label{Ln}
L_\n(z) := \int_{\mathbb{R}}\frac{Q_{\vec{n}}(x)}{z-x}
\end{equation}
is the function of second kind associated to the linear form \( Q_\n \).
\end{definition}

Given a measure $\mu$ on the real line, denote by \( \widehat\mu \) the following Cauchy-type integral
\begin{equation}
\label{markov}
\widehat{\mu}(z):=\int_{\mathbb{R}}\frac{d\mu(x)}{z-x}\,, \quad z\not\in\supp\,\mu\,,
\end{equation}
which, following the initial work of Markov \cite{Mar95}, is often referred to as a Markov function. Then, it follows from Definition~\ref{def:1.3} and orthogonality relations \eqref{1.2} that polynomials
\begin{equation}
\label{Pnj}
P_\n^{(j)}(z) := \int_{\mathbb{R}} \frac{P_\n(z)-P_\n(x)}{z-x}d\mu_j(x)\,, \quad j\in\{1,\ldots,d\}\,,
\end{equation}
satisfy
\[
R_\n^{(j)}(z) = P_\n(z)\widehat\mu_j(z) - P_\n^{(j)}(z) = \mathcal O\big(z^{-n_j-1}\big)\,, \quad j\in\{1,\ldots,d\}\,,
\]
where \( \mathcal O(\cdot) \) holds as \( z\to\infty \). Thus, to each vector of Markov functions \( (\widehat\mu_1,\ldots,\widehat\mu_d) \), type II multiple orthogonal polynomials allow us to define a vector of rational approximants \( (P_\n^{(1)}/P_\n,\ldots,P_\n^{(d)}/P_\n) \). Similarly, the polynomial
\[
A_\n^{(0)}(z) :=\int_{\mathbb{R}} \frac{Q_\n(z)-Q_\n(x)}{z-x}= \sum_{j=1}^d\int_{\mathbb{R}}\frac{A_\n^{(j)}(z)-A_\n^{(j)}(x)}{z-x}d\mu_j(x)
\]
satisfies
\begin{equation}
\label{LnInfty}
L_\n(z) = \sum_{j=1}^d A_\n^{(j)}(z)\widehat\mu_j(z) - A_\n^{(0)}(z) = z^{-|\n|} + \mathcal O\big(z^{-|\n|-1}\big)\,,
\end{equation}
where again \( \mathcal O(\cdot) \) holds as \( z\to\infty \). Hence, to each vector of Markov functions \( (\widehat\mu_1,\ldots,\widehat\mu_d) \), type I multiple orthogonal polynomials allow us to define a linear form that approximates this vector.

Multiple orthogonal polynomials and the corresponding approximants were introduced by Hermite in \cite{Herm73} as the main tool in his famous proof of the transcendency of \( e \). Later, Pad\'e undertook a systematic study of the case \( d=1 \) \cite{Pade92} (in this case both types of polynomials coincide up to an index shift and normalization). Nowadays, MOPs  and the corresponding approximants are often referred to as Hermite-Pad\'e polynomials and Hermite-Pad\'e approximants. For more information about multiple orthogonal polynomials, we refer the reader to survey papers \cite{Nut84,AS_PAS92,Ap98} and monograph~\cite{NikishinSorokin}.  For some recent results in the theory of MOPs, we refer the reader to \cite{vv5,MR3137137,vv3,vv7,vv4,vv1,MR3489559,vv2,vv6,MR3687129,MR3807896}.

\subsection{Lattice recurrence relations}
\label{S1.1b}

MOPs satisfy various recurrences (see, e.g.,  \cite{VA11,ApKal_Pal98,ApKalLLRocha06}).  We will be interested in the relationship between the nearest neighbors on the lattice $\vec{n}\in \mathbb{Z}_+^d$, where $\vec{n}$ is the index of orthogonal polynomial. Henceforth, we denote by  $\vec{e}_1:=(1,0,\ldots,0), \ldots, \vec{e}_d:=(0,\ldots,0,1)$ the standard basis in \( \R^d \).
For the linear forms $\{Q_{\vec{n}}\}$, we have (see, e.g., \cite{Ismail,VA11})
\begin{equation}\label{1.5}
xQ_{\vec{n}}(x)=Q_{\vec{n}-\vec{e}_j}(x)+b_{\vec{n}-\vec{e}_j,j}Q_{\vec{n}}(x)+\sum\limits_{l=1}^d a_{\vec{n},l}
Q_{\vec{n}+\vec{e}_l}(x)\,,\quad j\in\{1,\ldots,d\}, \quad \n \in \N^d,
\end{equation}
and type II polynomials satisfy
\begin{equation}\label{1.7}
xP_{\vec{n}}(x)=P_{\vec{n}+\vec{e}_j}(x)+b_{\vec{n},j}P_{\vec{n}}(x)+\sum\limits_{l=1}^d a_{\vec{n},l}
P_{\vec{n}-\vec{e}_l}(x)\,,\quad j\in\{1,\ldots,d\}, \quad \vec{n}\in \mathbb{Z}_+^d.
\end{equation}
In this equation, we let $P_{\vec{n}-\vec{e}_l}=0$ and $a_{\vec{n},l}=0$ if at least one of the components in the vector $\vec{n}-\vec{e}_l$ is negative.

It is known that the real-valued parameters  $\{a_{\vec{n},j}\}$ and  $\{b_{\vec{n},j}\}$ are uniquely determined by
 $\vec{\mu}$ (see formulas \eqref{por1} and \eqref{por} from Appendix~\ref{appA}).  From the definition of the polynomials of the second type, it is clear that, e.g., $\{P_{n\vec{e}_j}\}, n\in \mathbb{Z}_+$, are
monic polynomials orthogonal on the real line with respect to a
single measure $\mu_j$ and, when written for $\vec{n}=n\vec{e}_j$,
 exactly one of the equations \eqref{1.7} represents the
standard three term recurrence which will be discussed later. In general, setting some of the indices in $\vec{n}=(n_1,\ldots,n_d)$ to zero, e.g., letting $\vec{n}=(n_1,\ldots,n_l,0,\ldots,0)$ reduces the system to the one defined by truncated vector $(\mu_1,\ldots,\mu_l)$ and the corresponding recursions on the boundary can be viewed as lower-dimensional recursions.

If $d=1$, type II polynomials $\{P_n\}$ are the standard monic polynomials orthogonal on the real line with respect to the measure $\mu_1$ and
\[
A_n^{(1)}=\frac{P_{n-1}}{\|P_{n-1}\|_{\mu_1}^2}\,, \quad n\in \mathbb{N}\,.
\]
Equations \eqref{1.7} specialize to the standard three-term recurrence
\begin{equation}\label{sd_1}
xP_n(x)=P_{n+1}(x)+b_{n,1}P_{n}(x)+ a_{n,1}P_{n-1}(x)\,.
\end{equation}

Later in the paper, when $d=1$, will write $\mu$, $a_{n-1}$, $b_n$ instead of  $\mu_1$, $a_{n,1}$, $b_{n,1}$. It is known that $a_n>0, b_n\in \mathbb{R}$ for all $n\in \Z_+$ and, if $\mu$ is compactly supported, then
\begin{equation}
\sup_{n} a_n<\infty, \quad \sup_{n}|b_n|<\infty
\end{equation}
as follows from \eqref{1.19}, \eqref{lope}, and \eqref{lope1}  below.

Coefficients $\{a_n\}$ and $\{b_n\}$ define a one-sided tri-diagonal operator $\cal{H}$ that can be symmetrized to get a self-adjoint bounded operator $\cal{J}$, i.e., the Jacobi matrix,  (see formulas \eqref{1.14} and \eqref{1.20} below). Conversely, we can start with arbitrary $\{a_n\}, \{b_n\}$ that satisfy \[a_n>0,\quad  \sup_{n}a_n<\infty, \quad\sup_n|b_n|<\infty\] and define a self-adjoint bounded Jacobi matrix $\cal{J}$. Polynomials $\{P_n\}$ are determined by solving recursion \eqref{sd_1} with initial conditions  $P_{-1}=0,\,P_0=1$. Then, one can show that there exists a unique  measure $\mu$  for which $\{P_n\}$ are monic orthogonal. This $\mu$ turns out to be compactly supported.

 If $d>1$, unlike the one-dimensional case, we can not prescribe $\{a_{\vec{n},j}\}$ and $\{b_{\vec{n},j}\}$ arbitrarily. In fact, coefficients in \eqref{1.5} and \eqref{1.7}  satisfy the so-called ``consistency conditions'' which is a system of nonlinear difference equations (see, e.g., Theorem 3.2 in \cite{VA11}):
 \begin{eqnarray}\label{sd_i1}
 b_{\vec{n}+\vec{e}_i,j}- b_{\vec{n},j}=b_{\vec{n}+\vec{e}_j,i}- b_{\vec{n},i},\\ \label{sd_i2}
 \sum_{k=1}^d a_{\vec{n}+\vec{e}_j,k} -\sum_{k=1}^d a_{\vec{n}+\vec{e}_i,k}=b_{\vec{n}+\vec{e}_j,i}b_{\vec{n},j}-b_{\vec{n}+\vec{e}_i,j}b_{\vec{n},i},\\
 a_{\vec{n},i}(b_{\vec{n},j}-b_{\vec{n},i})=a_{\vec{n}+\vec{e}_j,i}(b_{\vec{n}-\vec{e}_i,j}-b_{\vec{n}-\vec{e}_i,i}),\label{sd_i3}
 \end{eqnarray}
 where $\vec{n}\in \N^d$ and $i,j\in \{1,\ldots,d\}$.  Relations \eqref{sd_i1}--\eqref{sd_i3} can be viewed as a
  discrete integrable system (see, e.g.,  \cite{Bob_DIS04}) whose associated Lax pair was studied in  \cite{ApDerVA16}.  
  
\subsection{Angelesco systems}

 In the one-dimensional case, recurrence relations \eqref{sd_1} establish a connection between the theory of orthogonal polynomials and the spectral theory of Jacobi matrices \cite{NikishinSorokin}. Therefore, it is natural to ask what self-adjoint operators are related to multidimensional equations \eqref{1.5} and \eqref{1.7}? There were several results in this direction. In \cite{ApDerVA15,ApDerMikiVA16}, equations \eqref{1.5} and \eqref{1.7} were combined to obtain the electro-magnetic Schr\"{o}dinger operator defined on $\ell^2(\mathbb{Z}_+^d)$. These operators  were  symmetrized but only in very special cases.
  In \cite{Kal94,ApKal_Pal98,ApLLRocha05,ApKalLLRocha06, ApKalS09, Ap14}, the recurrences along the diagonal (the so-called ``step-line'') were related to higher-order difference relations on $\mathbb{Z}_+$. They were not self-adjoint, in general.  

The main goal of this paper is to introduce bounded self-adjoint operators defined on $\ell^2(\cal{T})$, where $\cal{T}$ is a tree (finite or infinite) for which $\{P_{\vec{n}}\}$ and $\{Q_{\vec{n}}\}$ turn out to be the generalized eigenfunctions  after  suitable normalization.  This will be done under the following assumptions on $\vec{\mu}$ and $\{a_{\vec{n},j}\},  \{b_{\vec{n},j}\}$:
\begin{equation}\label{1.9}
\left\lbrace
\begin{array}{l}
{\rm (A)}\; \vec{\mu}\;-\;\mbox{perfect}\,,\medskip \\
{\rm (B)}\; 0<a_{\vec{n},j}\,\,{\rm for\,\, all}\,\, \n\in \Z_+^d \text{ such that } n_j>0, \,\,j\in \{1,\ldots,d\}\,,\medskip\\
{\rm (C)}\;\,\sup \limits_{\vec{n}\in \mathbb{N}^d,j\in \{1,\ldots,d\}}a_{\vec{n},j}<\infty\,,\,  \sup \limits_{\vec{n} \in \mathbb{N}^d,j\in \{1,\ldots,d\}}|b_{\vec{n},j}|<\infty  \, .
\end{array}
\right.
\end{equation}
We will show that conditions \eqref{1.9} are satisfied by Angelesco systems which is defined as follows.

\begin{definition}
We say that $\vec\mu$ is an Angelesco system of measures if
\begin{equation}\label{1.10}
\Delta_i\cap\Delta_j=\varnothing,\quad i,j\in\{1,\ldots,d\},
\end{equation}
where $\Delta_i:= \mathrm{Ch}(\supp\, \mu_i)$ and  \( \mathrm{Ch}(\cdot) \) stands for the convex hull. We note here that, $\{\Delta_i\}$ is the system of $d$ closed segments separated by $d-1$ nonempty open intervals.   Without loss of generality, we can assume that $\Delta_1<\ldots<\Delta_d$ ($E_1<E_2$ if \( \sup E_1 <\inf  E_2 \)).
\end{definition}

Angelesco systems, being important in theory of Hermite-Pad\'e approximation and in other areas of analysis and number theory, were studied in numerous papers, see, e.g.,  \cite{Ang19,Nik79,GRakh81,Apt_Szego,Y16} and references therein.

The theory of Schr\"{o}dinger operators on graphs has been an active
 topic lately which was motivated  by their applications in the study of some  problems in
mathematical physics \cite{aizenman,froese,klein}, most notably the delocalization in Anderson model. For the general
 spectral theory of operators on trees and more
references, see, e.g., \cite{keller}. We believe that our paper will
set the ground for further development in the theory of MOPs and
spectral theory of difference operators on graphs. Among the problems for future research in this direction we mention
the problem of finding the spectrum and the spectral type of the Jacobi matrices on
the trees generated by MOPs and building the spectral theory for
Nikishin system of MOPs (see, e.g., \cite{Nik80,NikishinSorokin,gu} for definition of Nikishin system and recent developments).
Multiple orthogonal polynomials for some classical weights were recently studied in, e.g., \cite{walt1} and the recurrence coefficients were found explicitly. These results allow one to write the Jacobi matrix on the tree in the exact form. We are planning to study them in subsequent publications.

In the next section, we recall the classical connection between Jacobi matrices and orthogonal polynomials. In section~\ref{sec:JMT} we introduce Jacobi matrices on trees and explain their relationship to the theory of MOPs. Then, in  section~\ref{sec:JMAS}, we explore the fact that  Angelesco systems satisfy assumption \eqref{1.9}. In particular, we show how results on ratio asymptotics for MOPs can be obtained using the established connection between MOPs and Jacobi matrices.  Appendix~\ref{appA} contains the proof that Angelesco systems satisfy \eqref{1.9}  and some general results.  In Appendix~\ref{ApB}, we apply matrix Riemann-Hilbert problem technique to prove the asymptotics of the recurrence coefficients and MOPs for Angelesco system with analytic weights that is also used in section~\ref{sec:JMAS}.

\section{Classical Jacobi matrices}
\label{sec:CJM}

In this section we quickly review the connection between orthogonal polynomials and the spectral theory of Jacobi matrices. Hereafter, we adopt the following notation:
\begin{itemize}
\item If $\mu$ is a measure on $\mathbb{R}$, then we set
\[
\langle f,g\rangle_\mu:=\int_{\mathbb{R}} f\overline gd\mu\,,\quad \|f\|_\mu:=\langle f,f\rangle_\mu^{\frac 12}\,,\quad  \|\mu\|:=\int_{\mathbb{R}}d\mu\,.
\]

\item Let $\cal{G}$ be a graph and $\cal{V}$ be the set of its vertices. For $X\in \cal{V}$ fixed, we put
\[
e_X(Y):=
\left\{
\begin{array}{cc}
1,& {\rm if} \,Y=X,\\
0, &{\rm otherwise}.
\end{array}
\right.
\]
\item When appropriate we identify \( \Z_+ \) with the set of vertices of a \(1\)-Cayley tree. In particular, \( e_l \), \( l\in\Z_+ \), stands for the function on \( \Z_+ \) defined as above.
\item If $B$ is an operator on the Hilbert space, symbol $\sigma(B)$ will indicate its spectrum.
\item If $\cal A$ is self-adjoint operator defined on $\ell^2(\cal{V})$ and $z\notin\sigma(\cal{A})$,  we will denote the Green's function of
$\cal{A}$ as
\[
G(X,Y,z):= \langle (\cal{A}-z)^{-1}e_Y,e_X\rangle,
\quad X,Y\in \cal{V}\,.
\]
We remark here that the identity
\[
\langle (\cal{A}-z)^{-1}e_Y,e_X\rangle=\langle
e_Y,(\cal{A}^*-\bar{z})^{-1}e_X\rangle=\overline{\langle
(\cal{A}-\bar{z})^{-1}e_X, e_Y\rangle}
\]
implies
\begin{equation}
G(X,Y,z)=\overline{G(Y,X,\bar z)}. \label{ggg1}
\end{equation}
\item If $\mu$ is a finite measure on the real line, then the function
\[
\Theta_\mu(z) := \int_\R\frac{d\mu(x)}{x-z}, \quad z\in \mathbb C,
\]
is called the Stieltjes transform of $\mu$. Clearly, it coincides with the Markov function of $\mu$ up to a sign, i.e., \( \Theta_\mu=-\widehat\mu \), see \eqref{markov}. We introduce this double notation as Markov functions are classical objects in the literature on approximation theory and orthogonal polynomials while Stieltjes transforms are standard in the spectral theory literature.
\end{itemize}


\subsection{Orthogonal polynomials}
\label{S1.2}

Consider a positive measure $\mu$ on $\mathbb{R}$ and assume that $\mu$ satisfies $\supp\,\mu\subseteq [-R,R]$ with some $R>0$. We recall that monic orthogonal polynomials $\{P_n\}, n\in \mathbb{Z}_+$, are defined by the conditions
\begin{equation}\label{1.11}
P_n(x)=x^n+\ldots\;,\quad\int_\R P_n(x)x^l d\mu(x)=0,\quad l\in\{0,\ldots,n-1\}.
\end{equation}
In one-dimensional theory, $\{P_n\}$ are called orthogonal polynomials of the first kind. We  write \eqref{Pnj} as
\[
A_n(z) := P_n^{(1)}(z) = \int_{\mathbb{R}} \frac{P_n(z)-P_n(x)}{z-x}d\mu(x),
\]
which is called the polynomial of the second kind. Notice that \( \deg A_n = n-1 \). Due to orthogonality relations \eqref{1.11}, integral formula for the function of the second kind \eqref{Rnj} can be rewritten as
\[
R_n(z) = \int_\R \left(\frac xz\right)^n\frac{P_n(x)}{z-x}d\mu(x)\,.
\]
Cauchy-Schwarz inequality gives
\begin{equation}\label{sden1_1}
 |R_n(z)|\le 2\|\mu\|^{\frac{1}{2}}R^n|z|^{-n-1}\|P_n\|_{\mu}\,
\end{equation}
 for $|z|>2R$. Polynomials $\{A_n\}$ satisfy the same recurrence as $\{P_n\}$ but with different initial conditions. More precisely, if we let $a_{-1}=-\|\mu\|$, then for $n\in \mathbb{Z}_+$ it holds that
\begin{equation}\label{1.12}
\left\{
\begin{array}{l}
xP_n(x)=P_{n+1}(x)+b_nP_n(x)+a_{n-1}P_{n-1}(x), \, P_{-1}:=0,\, P_0=1, \medskip \\
xA_n(x)=A_{n+1}(x)+b_nA_n(x)+a_{n-1}A_{n-1}(x), \, A_{-1}:=1,\, A_0=0.
\end{array}
\right.
\end{equation}

\subsection{Jacobi matrices}

Let us consider an operator
\begin{equation}\label{1.14}
\cal{H}:=\left[
\begin{array}{ccccc}
b_0 & 1 & 0& 0 &\ldots\\
a_0 & b_1 &1&0&\ldots \\
0 & a_1& b_2& 1&\ldots\\
0 & 0 & a_2 & b_3&\ldots\\
\ldots &\ldots&\ldots&\ldots&\ldots\\
\end{array}
\right]
\end{equation}
that acts on the space of sequences.  Write $\vec{P}:=(P_0,P_1,\ldots)$ and  $\vec{R}:=(R_0,R_1,\ldots)$. It follows from \eqref{1.12} that
\begin{equation}\label{1.15}
\mathcal{H}\vec{P}=x\vec{P}\,,\quad (\mathcal{H}-z)\vec{R}=-{e}_0 \|\mu\|\,,
\end{equation}
thus, formally, $\vec{P}$ is a generalized eigenvector for $\cal{H}$.

Now, we will show how this operator can be symmetrized. To this end, let us introduce
\begin{equation}\label{1.17}
m_n:=\|P_n\|_{\mu}\,.
\end{equation}
Multiplying \eqref{1.12} by $P_{n-1}$ or $P_n$, integrating against the measure $\mu$, and using orthogonality conditions \eqref{1.11} gives
\[
a_{n-1}=\frac{m_n^2}{m_{n-1}^2}>0\,,\quad b_n=\frac{\langle xP_n,P_n\rangle_\mu}{m_n^2}\,,\quad n\in \mathbb{Z}_+.
\]
Notice that \( m_n=(a_{n-1}a_{n-2}\ldots a_1a_0)^{\frac{1}{2}}\|\mu\| \). Denote
\begin{equation}\label{sd_ol}
 \quad p_n:=P_n m_n^{-1},\quad  r_n:=-R_nm^{-1}_n\,.
\end{equation}
Then polynomials  $p_n$ are orthonormal with positive leading coefficients and satisfy
\begin{equation}\label{1.19}
xp_n(x)=c_np_{n+1}(x)+b_np_n(x)+c_{n-1}p_{n-1}(x),\quad c_n:=\sqrt{a_n}\,.
\end{equation}
This equation can be used to easily estimate $\|\{a_n\}\|_\infty$ and $\|\{b_n\}\|_\infty$ in terms of $\supp \,\mu$ only. Indeed, multiplying \eqref{1.19} by $p_{n-1}(x)$ and integrating with respect to $\mu$ gives
\[
c_{n-1}=\int_\mathbb{R} xp_{n-1}(x)p_n(x)d\mu(x)=\int_\mathbb{R}(x-\lambda)p_{n-1}(x)p_n(x)d\mu(x)
\]
with arbitrary $\lambda$. After setting \( \lambda \) to be the midpoint of \( \Delta \) and applying Cauchy-Schwarz inequality, this
yields
\begin{equation}\label{lope}
\|\{c_n\}\|_{\ell^\infty(\mathbb{Z_+})}\le |\Delta|/2,\quad
\,\
\end{equation}
where $\Delta:=   \mathrm{Ch}(\supp\, \mu)$. Next, multiplying \eqref{1.19} by $p_n(x)$ and integrating with respect to $\mu$ gives
\begin{equation}\label{sd_jk}
b_n=\int_{\mathbb{R}} xp_n^2(x)d\mu(x)
\end{equation}
and
 \begin{equation}\label{lope1}
\|\{b_n\}\|_{\ell^\infty(\mathbb{Z_+})}\le \sup_{x\in \Delta}|x|\,.
\end{equation}

The Jacobi matrix $\cal J$, defined by
\begin{equation}\label{1.20}
\mathcal{J}:= \left[
\begin{array}{ccccc}
b_0 & c_0 & 0& 0 &\ldots\\
c_0 & b_1 &c_1&0&\ldots \\
0 & c_1& b_2& c_2&\ldots\\
0 & 0 & c_2 & b_3&\ldots\\
\ldots &\ldots&\ldots&\ldots&\ldots\\
\end{array}
\right],
\end{equation}
is symmetric in $\ell^2(\mathbb{Z}_+)$. Since the sequences $\{a_n\}$ and $\{b_n\}$ are both bounded, the operator $\cal{J}$ is bounded and self-adjoint.
If $\vec{p}:=(p_0,p_1,\ldots),\;\vec{r}:=(r_0,r_1,\ldots)$\,, then \eqref{1.15} and \eqref{sd_ol} yield
\[
\cal{J}\vec{p}=x\vec{p}\,,\quad (\cal{J}-z)\vec{r}=e_0\,.
\]

Similarly to \eqref{sden1_1}, we can write
\[
r_n(z)=-\int_\mathbb{R} \left(\frac xz\right)^n\frac{p_n(x)}{z-x}d\mu(x),\quad
|r_n(z)|<2R^n|z|^{-(n+1)},\, n\in \mathbb{N}, \quad |z|>2R\,,
\]
since \( \mathrm{supp}\mu\subseteq[-R,R] \). Therefore, $\vec{r}\in \ell^2(\mathbb{Z}_+)$ for $|z|>2R$,   and this implies that
\begin{equation}\label{gno1}
\vec{r}=  (\cal{J}-z)^{-1}  e_0,\quad z\notin \sigma(\cal{J}),
\end{equation}
by analyticity in $z$. We will also need the finite sections
\begin{equation}\label{sd_a3}
\cal{J}_N:= \left[
\begin{array}{cccccc}
b_0 & c_0 & 0&                  \ldots &       \ldots &0\\
c_0 & b_1 &c_1&                   \ldots &       \ldots &0 \\
0 & c_1& b_2&      \ldots &      \ldots &  0\\
\ldots & \ldots & \ldots &        \ldots &     \ldots&\ldots\\
0 &0&0&                           \ldots &     c_{N-1} & b_{N}\\
\end{array}
\right]\,
\end{equation}
which are all symmetric matrices.
If $\vec{p}_{N}:=(p_0,\ldots,p_N)$, we get
\begin{equation}
\label{1.21a}
(\cal{J}_N-x)\vec{p}_N=-c_{N}p_{N+1}(x)e_{N}.
\end{equation}

\subsection{Green's functions}

It follows from \eqref{gno1} that
\[
G(e_0,e_0,z)=\langle   (\cal{J}-z)^{-1}e_0,e_0\rangle=-\widehat\mu(z)\|\mu\|^{-1},
\]
which shows that $-\|\mu\|^{-1}\widehat\mu$ is the Stieltjes transform of the spectral measure of ${e}_0$ with respect to $\cal{J}$. Moreover, \eqref{1.21a} implies that
\begin{equation}\label{gf0}
G^{(N)}(e_j,e_N,x)=-\frac{p_j(x)}{c_{N}p_{N+1}(x)}, \quad j\in\{0,\ldots,N\}.
\end{equation}
Hence, the matrix element
\[
M_N(z):= G^{(N)}(e_N,  e_N ,z)=\langle (\cal{J}_N-z)^{-1}e_N,e_N\rangle
\]
is the Stieltjes transform of the spectral measure of ${e}_N$ relative to the operator $\cal{J}_N$ as given by the Spectral Theorem. We also see from \eqref{gf0} that
\begin{equation} \label{r1}
M_N(z)=-\frac{p_N(z)}{c_{N}p_{N+1}(z)}\,.
\end{equation}
Now, take \eqref{1.19} with $n=N$ and divide by $p_N$ to get
\begin{equation}\label{r211}
M_N(z)=\frac{1}{b_N-z-c_{N-1}^2M_{N-1}(z)}\,.
\end{equation}
 Iterating this representation gives a continued
fraction expansion for the rational function $M_N$.

Since $\cal{J}_N$ is self-adjoint,  \eqref{ggg1} yields
\begin{equation}\label{gno2}
G^{(N)}(e_N,e_0,z)=\overline{G^{(N)}(e_0,e_N,\bar
z)}=-\frac{p_0(z)}{c_{N}p_{N+1}(z)}\,,
\end{equation}
because all the coefficients of $p_j$ are real.

Identities \eqref{gno1}, \eqref{r1}, and \eqref{gno2}
establish  remarkable connection between the  spectral
characteristics of $\cal{J}$ and $\cal{J}_N$ and the associated 
orthogonal polynomials $p_n$. In particular, their asymptotics
allows one to write asymptotics of Green's functions. Namely, assume that the measure $\mu$ is supported on $[-1,1]$ and satisfies
the Szeg\H{o} condition
\[
\int_{-1}^1 \frac{\log \mu^\prime(x)}{\sqrt{1-x^2}}dx>-\infty\,.
\]
Then, it is known that (see, e.g., \cite[p. 121, Theorem 5.4]{NikishinSorokin})
\[
p_n(z)=(1+o(1))S(z)\Bigl(z+\sqrt{z^2-1}\Bigr)^n, \quad
n\to\infty, \quad z\in \mathbb{C}\backslash [-1,1]\,,
\]
where $S$, the so-called Szeg\H{o} function, is a function analytic and non-vanishing in $\mathbb{C}\backslash[-1,1]$ which is defined explicitly through $\mu^\prime$. Under these assumptions, we also have
$\lim_{n\to\infty}c_n=1/2,\quad \lim_{n\to\infty}b_n=0$ and
therefore
 $\lim_{N\to\infty}M_N(z)=-2(z-\sqrt{z^2-1}),\,z\in \mathbb{C}\backslash [-1,1]$.\bigskip

\section{Jacobi matrices on trees and MOPs}
\label{sec:JMT}

In this section we assume that $\vec{\mu}$ satisfies \eqref{1.9}.

\subsection{JMs on finite trees and MOPs of the second type}

Fix $\vec{N}=(N_1,\ldots,N_d)\in \mathbb{N}^d$ and a vector ${\vec{\kappa}}\in \mathbb{R}^d$, which satisfies normalization
\begin{equation}
\label{sd_g1}
|{{\vec{\kappa}}}|:={{\kappa}}_1+\cdots+{{\kappa}}_d=1\,.
\end{equation}
We shall define an operator $\cal{K}_{\vec{\kappa},\vec{N}}$, an analog of an \( N\times N \) truncation of the operator $\cal H$ defined in \eqref{1.14}. Domain of $\cal{K}_{\vec{\kappa},\vec{N}}$ consists of functions defined on vertices of a certain finite tree \( \cal T_{\vec N} \) constructed in the following fashion. Truncate $\Z_+^d$ to the rectangle $\mathcal{R}_{\vec{N}}=\{\vec{n}: n_1\leq N_1,\ldots, n_d\leq N_d\}$ and denote by $\mathcal{P}_{\vec{N}}$ the family of all paths of length $|\vec{N}|=N_1+\cdots+N_d$ connecting the points $(0,\ldots,0)$ and $\vec N=(N_1,\ldots,N_d)$ (within a path exactly one of the coordinates is increasing by $1$ at each step). Untwine \( \cal P_{\vec N} \) into a tree \( \cal T_{\vec N} \) in such a way that \( \cal P_{\vec N} \) is in one-to-one correspondence with the paths in \( \cal T_{\vec N} \), where the root of \( \cal T_{\vec N} \), say \( O \), corresponds to \( \vec N \), see Figure~\ref{fig:finite-tree} for $d=2$ and $\vec{N}=(2,1)$.
\begin{figure}[h!]
\includegraphics[scale=.6]{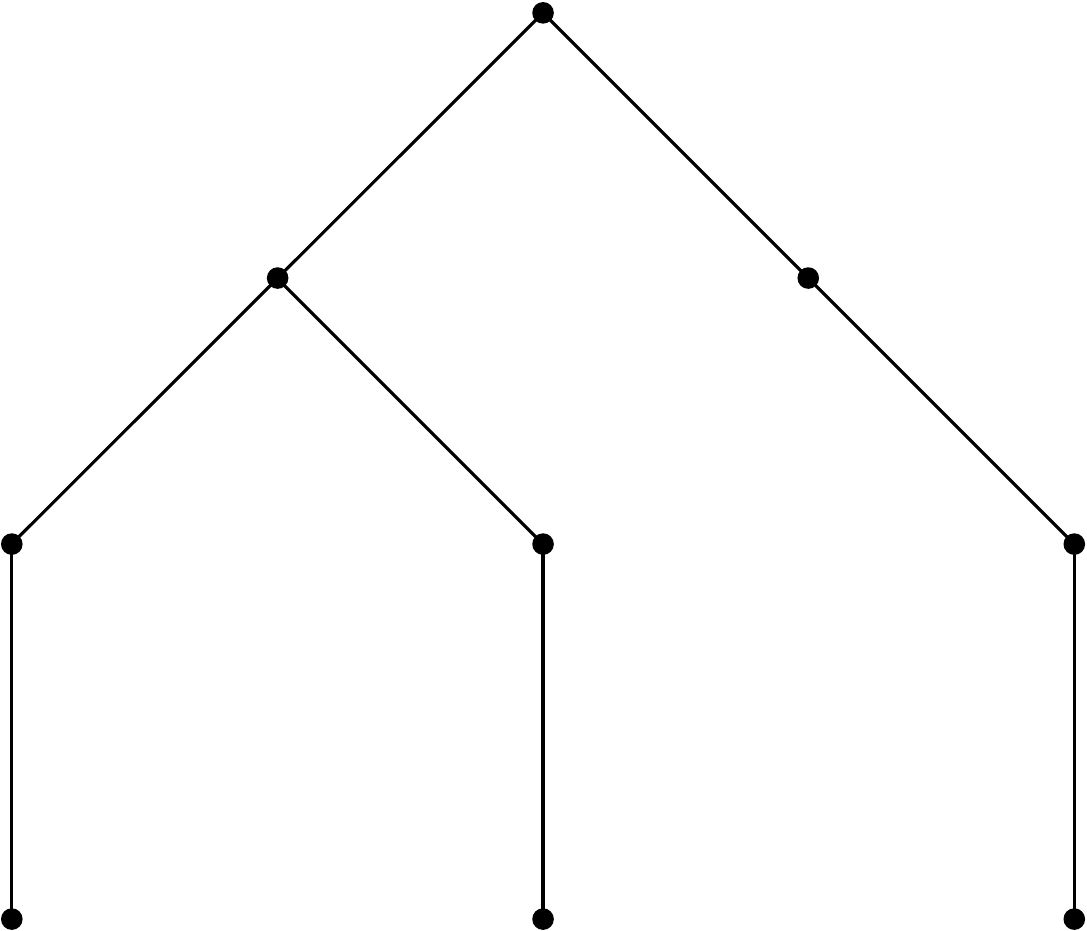}
\begin{picture}(0,0)
\put(-90,157){$(2,1)$}
\put(-137,110){$(1,1)\sim Y_{(p)}$}
\put(-45,110){$(2,0)\sim Z_{(p)}$}
\put(-185,63){$(0,1)\sim Y$}
\put(-95,63){$(1,0)$}
\put(-2,63){$(1,0)\sim Z$}
\put(-185,0){$(0,0)\sim Y_{(ch),2}$}
\put(-95,0){$(0,0)$}
\put(-2,0){$(0,0)\sim Z_{(ch),1}$}
\end{picture}
\caption{Tree for $d=2$ and \( \vec N=(2,1) \).}
\label{fig:finite-tree}
\end{figure}
The vertices of \( \cal T_{\vec N} \) correspond to the points of the grid \( \cal R_{\vec N} \) visited along the corresponding path. We denote by $\cal{V}_{\vec{N}}$ the set of these vertices and let  $\Pi$ stand for the projection operator from $\cal{V}_{\vec{N}}$ onto $\cal{R}_{\vec{N}}$.  Given a vertex \( Y\in \cal V_{\vec N} \), we denote by \( Y_{(p)} \) the ''parent'' of \( Y \) and define the following index function on \( \cal V_{\vec N} \):
\[
\ell:\cal V_{\vec N} \to \{1,\ldots,d\}, \quad Y\mapsto \ell_Y \text{ such that } \Pi(Y_{(p)}) = \Pi(Y) + \vec e_{\ell_Y}.
\]
Finally, we denote the ``children'' of \( Y \)  by \( Y_{(ch),l} \), where \( l\in ch(Y):=\{i:n_i>0,\Pi(Y)=(n_1,\ldots,n_d)\} \) and \( \Pi(Y) = \Pi(Y_{(ch),l}) + \vec e_l \) (that is, \( Z=Y_{(ch),l} \) if \( l=\ell_Z\)), see Figure~\ref{fig:finite-tree}.

\begin{remark}
Most of the points in $\cal{R}_{\vec{N}}$  correspond to multiple vertices of the tree \( \cal T_{\vec N} \), so $\Pi^{-1}$, in general, is not uniquely defined.
\end{remark}

\begin{remark}
The number of  children of a vertex \( Y \) is equal to the number of non-zero coordinates of \( \Pi(Y) \). Hence, most of the vertices have exactly \( d \) children.
\end{remark}

To define the operator \( \cal K_{\vec \kappa,\vec N} \), we first define two interaction functions \( V,W:\cal V_{\vec N}\to\R \) with the help of the recurrence coefficients \( \{a_{\n,i},b_{\n,i}\} \) from \eqref{1.5}, \eqref{1.7}. Namely, we set
\[
\left\{
\begin{array}{ll}
V_Y := b_{\Pi(Y),\ell_Y}, & Y\neq O, \medskip \\
V_O := \sum_{j=1}^d \kappa_j b_{\vec{N},j}, & Y=O,
\end{array}
\right. \quad \text{and} \quad
\left\{
\begin{array}{ll}
W_Y := a_{\Pi(Y_{(p)}),\ell_Y}, & Y\neq O, \medskip \\
W_O := 1, & Y=O.
\end{array}
\right.
\]
Then, for any function $f$ defined on $\cal{V}_{\vec{N}}$, the action of the operator $\cal{K}_{\vec{\kappa},\vec{N}}$ can be written in the following form
\[
\left\{
\begin{array}{ll}
(\cal{K}_{\vec{\kappa},\vec{N}} f)_Y := f_{Y_{(p)}} + (Vf)_Y + \sum_{l\in ch(Y)} (Wf)_{Y_{(ch),l}}, & Y\neq O, \medskip \\
(\cal{K}_{\vec{\kappa},\vec{N}} f)_O := (Vf)_O + \sum_{l\in ch(O)} (Wf)_{O_{(ch),l}}, & Y=O.
\end{array}
\right.
\]

\begin{remark}
Clearly, this construction represents untwining $d$ recurrences \eqref{1.7}  at {\it the same} point $\vec{n}\in \mathbb{Z}_+^d$ to equations on the tree - one for each of  {\it many} vertices \( Y \) on $\cal{T}_{\vec{N}}$ that satisfy $\Pi(Y)=\n$.
\end{remark}

\begin{remark}
 The constructed tree $\mathcal{T}_{\vec{N}}$ is not homogeneous since the vertices on the tree representing the points on coordinate planes in $\mathbb{Z}_+^d$ have fewer than $d$ children. However, one can consider the homogeneous infinite rooted tree $\cal{T}, \cal{T}_{\vec{N}}\subset \cal{T}$, with the same root as $\cal{T}_{\vec{N}}$ and extend  $\cal{K}_{\vec{\kappa},\vec{N}}$ to $\cal{T}\backslash \cal{T}_{\vec{N}}$ by setting $\cal{K}_{\vec{\kappa},\vec{N}}=0$. Then, the resulting operator defined on all of $\cal{T}$ decouples into the direct sum of a finite matrix $\cal{K}_{\vec{\kappa},\vec{N}}|_{\cal{T}_{\vec{N}}}$ and the zero operator.
\end{remark}

Let us now consider the polynomials $P_\n(z)$ as a function \( P \) on \( \cal V_{\vec N} \) given by \( P_Y=P_{\Pi(Y)} \), where \( z \) is now treated as a parameter. It follows from \eqref{1.7} and the definition of $\cal{K}_{\vec{\kappa},\vec{N}}$ that $P$ satisfies the following operator equation:
\begin{equation}\label{r3}
\cal{K}_{\vec{\kappa},\vec{N}} P=zP-  \Bigl(\sum_{j=1}^d\kappa_j P_{\vec{N}+\vec{e}_j}(z) \Bigr){e}_O\,.
\end{equation}

\begin{remark}
In the definition of the operator \( \cal K_{\vec\kappa,\vec N} \) the numbers \( \{a_{\n,i},b_{\n,i}\} \) could be absolutely arbitrary. However, \eqref{r3} holds precisely because these numbers come from the recurrence relations \eqref{1.7}.
\end{remark}

Now, we can use  (B) from assumptions \eqref{1.9}  to symmetrize $\cal{K}_{\vec{\kappa},\vec{N}}$ and  produce a self-adjoint operator
$\cal J_{\vec\kappa,\vec N}$, which is an analog of $\cal{J}_N$ defined in \eqref{sd_a3}. To do that, consider a function $m$ defined on
$\cal{V}_{\vec{N}}$ and choose $m$ such that $ \cal J_{\vec\kappa,\vec N} := m^{-1}\cal K_{\vec\kappa,\vec N} m$ is symmetric on
$\ell^2(\cal{V}_{\vec{N}})$. This condition is easy to satisfy by taking $m$ as follows:
\[
m_Y:= \prod_{y\in {\rm path}(Y,O)} \big(W_y\big)^{-\frac 12},
\]
where ${\rm path} (Y,O)$ is the non-self-intersecting path connecting $Y$ and $O$ ($Y$ and $O$ are included in the path). For the resulting self-adjoint operator $\cal J_{\vec\kappa,\vec N}$, which we call Jacobi matrix on a tree, we have
\begin{equation}
\label{r7}
\left\{
\begin{array}{ll}
(\cal{J}_{\vec{\kappa},\vec{N}} f)_Y := \big(W_Y\big)^{1/2}f_{Y_{(p)}} + (Vf)_Y + \sum_{l\in ch(Y)} \big(W_{Y_{(ch),l}}\big)^{1/2}f_{Y_{(ch),l}}, & Y\neq O, \medskip \\
(\cal{J}_{\vec{\kappa},\vec{N}} f)_O := (Vf)_O + \sum_{l\in ch(O)} \big(W_{O_{(ch),l}}\big)^{1/2}f_{O_{(ch),l}}, & Y=O.
\end{array}
\right.
\end{equation}
Furthermore, we get from \eqref{r3}  an identity
\begin{eqnarray}
\label{r4}
\cal J_{\vec\kappa,\vec N} p=zp- \Bigl(\sum_{j=1}^d\kappa_j P_{\vec{N}+\vec{e}_j}(z) \Bigr)e_O, \quad p:=m^{-1} P.
\end{eqnarray}

\begin{remark}
To symmetrize the operator \( \cal K_{\vec\kappa,\vec N} \) we only need the positivity of the numbers \( \{a_{\n,i}\} \), but again to get \eqref{r4} we need the full power of \eqref{1.7}.
\end{remark}

\subsection{JMs on finite trees: Green's functions}

Identity \eqref{r4} gives a formula for the Green's functions of $\cal{J}_{\vec{\kappa},\vec{N}}$:
\begin{equation}\label{fg3}
G^{(\vec{N})}(Y,O,z)=-\frac{p_Y(z)}{\sum_{j=1}^d\kappa_j P_{\vec{N}+\vec{e}_j}(z)   }, \quad z\in \mathbb{C}\backslash \sigma(\cal{J}_{\vec{\kappa},\vec{N}})\,,
\end{equation}
which is an analog of \eqref{gf0}. In fact, if $d=1$ and $\Pi(Y) = n$, we have
\[
p_Y=P_n\sqrt{a_n \cdot \ldots\cdot a_{N-1}}=\sqrt{a_0\ldots a_{N-1}}\frac{P_n}{\sqrt {a_0\ldots a_{n-1}}}=(\sqrt{a_0\ldots a_{N-1}}\|\mu\|)p_n
\]
and $p_Y$ coincides with $p_n$ up to a scalar multiple. Furthermore, by taking $Y=O$ in \eqref{fg3}, we get
\begin{equation}
\label{m11}
\langle (\cal{J}_{\vec{\kappa},\vec{N}}-z)^{-1}e_{O},e_{O}\rangle = -\frac{P_{\vec{N}}(z)}{ \sum_{j=1}^d\kappa_j P_{\vec{N}+\vec{e}_j}(z) }\,.
\end{equation}
If $\vec{\kappa}=\vec e_j$, we get a ratio of two MOPs with neighboring indices similar to \eqref{r1}, that is,
\begin{equation}\label{ratio}
M_{\vec{N}}^{(j)}(z) := \langle (\cal{J}_{\vec e_j,\vec{N}}-z)^{-1}e_{O},e_{O}\rangle = -\frac{P_{\vec{N}}(z)}{P_{\vec{N}+\vec{e}_j}(z)}
\end{equation}
(recall that \( O \) corresponds to the multi-index \( \vec N \) so the analogy with \eqref{r1} is indeed valid). These ratios were already studied in \cite{ApDerVA16} (e.g., formulas (5.5) and (5.6)).  

Divide \eqref{1.7} with $\vec{n}=\vec{N}$ by $P_{\vec{N}}$ to get
\begin{eqnarray}\label{cc1}
x=-\frac{1}{M_{\vec{N}}^{(j)}(x)}+b_{\vec{N},j}-\sum_{l}{a_{\vec{N},l}}{M_{\vec{N}-\vec{e}_l}^{(l)}}(x)\,.
\end{eqnarray}
It is worth mentioning here that $P_{\vec{n}+\vec{e}_j}$ and $P_{\vec{n}+\vec{e}_m}$ are connected by a very simple identity if $j\neq m$. If we subtract recursions \eqref{1.7} from each other and divide the resulting equation by \( P_\n \), we get
\[
\frac{P_{\vec{n}+\vec{e}_j}}{P_{\vec{n}}}
=\frac{P_{\vec{n}+\vec{e}_m}}{P_{\vec{n}}}+b_{\vec{n},m} -b_{\vec{n},j}
\]
implying that
\begin{equation}\label{cc2}
-\frac{1}{M_{\vec{N}}^{(j)}}=-\frac{1}{M_{\vec{N}}^{(m)}}+b_{\vec{N},j}-b_{\vec{N},m}\,\quad \vec{N}\in \mathbb{Z}_+^d\,.
\end{equation}
Iteration of \eqref{cc1}, with application of \eqref{cc2} when the projection of the path hits the margin of $\mathbb{Z}_+^d$, gives a branching continued fraction expansion, which generalizes the standard one obtained from \eqref{r211}.

Finally, consider any $Y$ and apply formula \eqref{ggg1} to \eqref{fg3}. Since $\cal{J}_{\vec{\kappa},\vec{N}}$ is self-adjoint and all the coefficients of $P_{\vec{n}}$ are real, it gives
\[
G^{(\vec{N})}(O,Y,z)=\overline{G^{(\vec{N})}(Y,O,\bar{z})}=-\frac{p_Y(z)}{
\sum_{j=1}^d\kappa_j P_{\vec{N}+\vec{e}_j}(z)
}=-\frac{m^{-1}_YP_Y(z)}{ \sum_{j=1}^d\kappa_j P_{\vec{N}+\vec{e}_j}(z)   }\,.
\]
In particular, taking $Y$ as any point at the bottom of the tree and noticing that $\Pi(Y)=(0,\ldots,0), P_Y=1$, we get
\[
G^{(\vec{N})}(O,Y,z)=-\frac{m^{-1}_{Y}}{ \sum_{j=1}^d\kappa_j P_{\vec{N}+\vec{e}_j}(z) }
\]
which is an analog of \eqref{gno2}.

\subsection{JMs on infinite trees and MOPs of the first type}

Take $\vec{n}\in \mathbb{N}^d$ and consider all paths that connect $(1,\ldots,1)$ with $\vec{n}$. Again, we assume that each path goes from $(1,\ldots,1)$ to $\vec{n}$ by increasing one of the coordinates by $1$ at each step. We consider infinite rooted tree (Cayley tree) with root $O$ that corresponds to $(1,\ldots,1)$. This tree is obtained, as before, by untwining  paths to the lattice, see Figure~\ref{fig:inf-tree} below for $d=2$. We denote this tree by $\cal{T}$ and the set of its vertices by $\cal{V}$. The projection from $\cal{V}$ to $\mathbb{N}^d$ is again denoted by $\Pi$. Every vertex $Y\in \cal{V}, Y\neq O$, has the unique parent, denoted as before by \( Y_{(p)} \), which allows us to define the following index function:
\begin{equation}
\label{tildeell}
\tilde\ell:\cal V \to \{1,\ldots,d\}, \quad Y\mapsto \tilde\ell_Y \text{ such that }  \Pi(Y) = \Pi(Y_{(p)}) + \vec e_{\tilde\ell_Y}.
\end{equation}
With the help of this function we can label the ``children'' of each vertex \( Y \in \cal V \) as $\{Y_{(ch),1},\ldots,Y_{(ch),d}\}$, where we choose index \( l\in\{1,\ldots,d\} \) so that \(  \Pi(Y_{(ch),l}) = \Pi(Y) + \vec e_l \), that is, \( Z=\Pi(Y_{(ch),l}) \) if \( \tilde\ell_Z=l \).

\begin{figure}[h!]
\includegraphics[scale=.6]{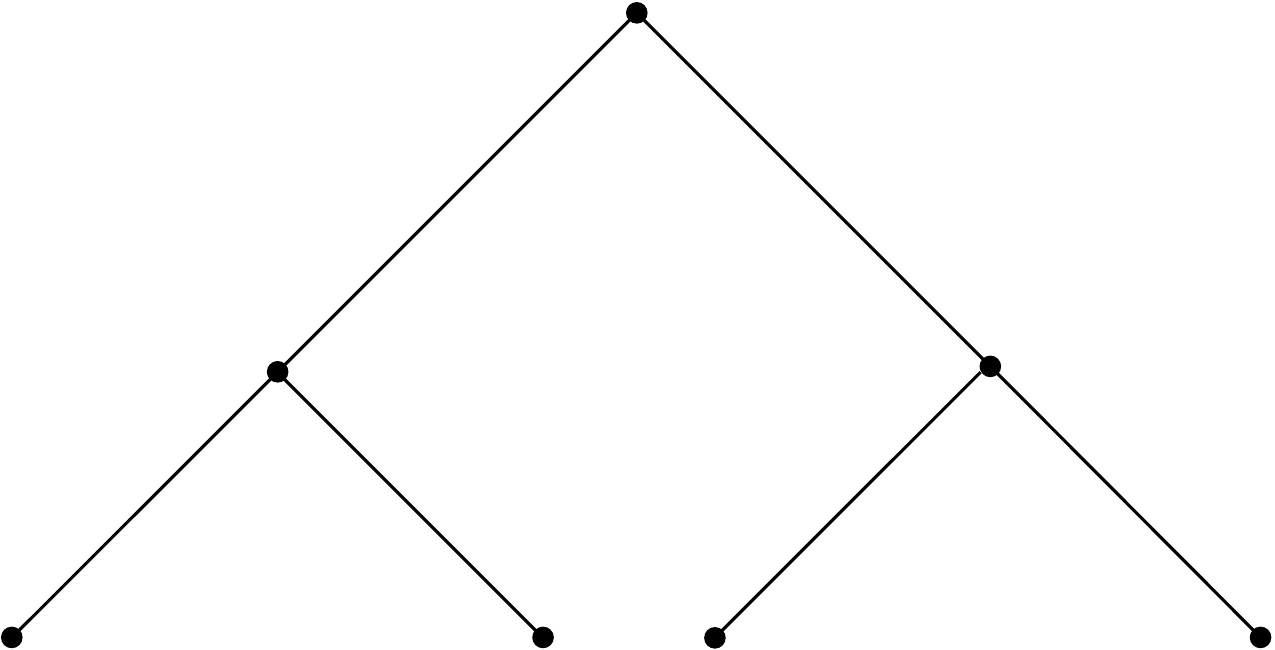}
\begin{picture}(0,0)
\put(-110,110){$(1,1)\sim O=Y_{(p)}$}
\put(-170,47){$(2,1)$}
\put(-48,47){$(1,2)\sim Y=O_{(ch),2}$}
\put(-215,0){$(3,1)$}
\put(-128,0){$(2,2)$}
\put(-95,0){$(2,2)\sim Y_{(ch),1}$}
\put(-2,0){$(1,3)\sim Y_{(ch),2}$}
\end{picture}
\caption{Three generations of the tree \( \mathcal T \) when $d=2$.}
\label{fig:inf-tree}
\end{figure}

Again, let \( \{ a_{\n,i},b_{\n,i} \} \) be the recurrence coefficients from \eqref{1.5}. To define the operator \( \cal R_{\vec \kappa} \) on \( \cal V \), we first define two interaction functions \( \widetilde V,\widetilde W:\cal V\to\R \) by
\[
\left\{
\begin{array}{ll}
\widetilde V_Y := b_{\Pi(Y_{(p)}),\tilde\ell_Y}, & Y\neq O, \medskip \\
\widetilde V_O := \sum_{j=1}^d \kappa_j b_{\vec 1-\vec e_j,j}, & Y=O,
\end{array}
\right. \quad \text{and} \quad
\left\{
\begin{array}{ll}
\widetilde W_Y := a_{\Pi(Y_{(p)}),\ell_Y}, & Y\neq O, \medskip \\
\widetilde W_O := 1, & Y=O,
\end{array}
\right.
\]
where \( \vec 1=( 1,\ldots,1) \) and \( \vec\kappa \) is as in \eqref{sd_g1}. Then, for any function $f\in \ell^2(\cal V)$, the action of the operator $\cal{R}_{\vec{\kappa}}$ can be written in the following form
\begin{equation}
\label{dede}
\left\{
\begin{array}{ll}
(\cal{R}_{\vec{\kappa}} f)_Y := f_{Y_{(p)}} + (\widetilde Vf)_Y + \sum_{l=1}^d (\widetilde Wf)_{Y_{(ch),l}}, & Y\neq O, \medskip \\
(\cal{R}_{\vec{\kappa}} f)_O := (\widetilde Vf)_O + \sum_{l=1}^d (\widetilde Wf)_{O_{(ch),l}}, & Y=O.
\end{array}
\right.
\end{equation}

\begin{remark} Given \( \n\in\N^d \), let \( k\in\{1,\ldots,d\} \) be the number of the coordinates of \( \n \) equal to \(1\). Then for the definition of the operator $\cal{R}_{\vec{\kappa}}$ at \( Y \) with \( \Pi(Y)=\n \) we use one of only \( d-k \) recurrences \eqref{1.5} with excluded ones corresponding to the indices \( j \) such that \( \n-\vec e_j\not\in\N^d \).
\end{remark}

Now, recall formula \eqref{sad1} and  consider  forms $Q_\n(z)$ as a signed-measure-valued function \( Q \) on \( \cal V \) given by \( Q_Y=Q_{\Pi(Y)} \), where \( z \) is treated as a parameter.  Similarly, we can transfer polynomials $A_{\vec{n}}^{(1)}(z), \ldots,A_{\vec{n}}^{(d)}(z)$ to obtain functions $A_{Y}^{(1)}, \ldots,A_{Y}^{(d)}$, respectively, \( Y\in \cal V\), that depend on a parameter \( z \). From our construction and \eqref{1.5}, we have that
\begin{equation}
\label{opRk}
(\cal{R}_{\vec{\kappa}}-x) Q=-   \Bigl(  \sum_{j=1}^{d} \kappa_j Q_{\vec 1-\vec{e}_j}\Bigr)e_{O} = \Bigl(\sum_{i=1}^d \gamma_id\mu_i\Bigr) e_O,
\end{equation}
where the coefficients \( \gamma_i \) can be found explicitly via the relations
\begin{equation}
\label{my-gammai1}
\gamma_i := -\sum_{j=1}^d \kappa_j A_{\vec 1-\vec e_j}^{(i)}
\end{equation}
and the constants \( A_{\vec 1-\vec e_j}^{(i)} \) (these are polynomials of degree at most zero) are such that \( A_{\vec 1-\vec e_j}^{(j)} = 0 \) and
\begin{equation}
\label{my-gammai2}
\left(\begin{matrix} 0 \smallskip \\ \vdots \smallskip \\ 0 \smallskip \\ 1\end{matrix}\right) = \left(\begin{matrix} \int d\mu_1(t) & \cdots & \int d\mu_d(t) \smallskip \\ \vdots & \ddots & \vdots \smallskip \\ \int t^{d-3}d\mu_1(t) & \cdots & \int t^{d-3}d\mu_d(t) \bigskip \\ \int t^{d-2}d\mu_1(t) & \cdots & \int t^{d-2}d\mu_d(t) \end{matrix}\right) \left(\begin{matrix} A_{\vec 1-\vec e_j}^{(1)} \smallskip \\ \vdots \smallskip \\ A_{\vec 1-\vec e_j}^{(d-1)} \smallskip \\ A_{\vec 1-\vec e_j}^{(d)} \end{matrix}\right)
\end{equation}
(when \( d=2 \) the above system retains only the last line; even though the system is written as a matrix with \( d-1 \) rows and \( d \) columns, forcing \( A_{\vec 1-\vec e_j}^{(j)} = 0 \) turns it into a square system). 

Now, define the function \( L \) on \( \cal V \) by setting \( L_Y:=L_{\Pi(Y)} \), see \eqref{Ln} (again, it depends on a parameter \( z \)).  Then, we get from \eqref{dede} that
\[
\int \frac{\langle(\cal{R}_{\vec{\kappa}}-x) Q,e_Y\rangle}{z-x} = \big\langle (\cal{R}_{\vec{\kappa}}-z)L,e_Y\big\rangle + \int Q_{\Pi(Y)}(x) =  \big\langle (\cal{R}_{\vec{\kappa}}-z)L,e_Y\big\rangle ,
\]
where the last equality holds since \( |\Pi(Y)|\geq d\geq 2 \) and therefore \( Q_{\Pi(Y)} \) is always orthogonal to constants by \eqref{1.1}. The above identity, in view of \eqref{opRk} and \eqref{markov}, yields that
\[
(\cal{R}_{\vec{\kappa}}-z) L=\Bigl(\sum_{i=1}^d \gamma_i \widehat\mu_i (z)
\Bigr)e_{O}\,.
\]

Finally, similarly to the case of operators on finite trees, we can symmetrize $\cal{R}_{\vec{\kappa}}$ to get symmetric $\cal{J}_{\vec{\kappa}}$ formally defined via
\begin{equation}
\label{ooo}
\left\{
\begin{array}{ll}
(\cal{J}_{\vec{\kappa}} f)_Y := \big(\widetilde W_Y\big)^{1/2}f_{Y_{(p)}} + (\widetilde Vf)_Y + \sum_{i=1}^d \big(\widetilde W_{Y_{(ch),i}}\big)^{1/2}f_{Y_{(ch),i}}, & Y\neq O, \medskip \\
(\cal{J}_{\vec{\kappa}} f)_O := (\widetilde Vf)_O + \sum_{i=1}^d \big(\widetilde W_{O_{(ch),i}}\big)^{1/2}f_{O_{(ch),i}}, & Y=O.
\end{array}
\right.
\end{equation}
In this case it holds that
\begin{equation}
\label{sd_f33}
(\cal{J}_{\vec{\kappa}}-z)l=\Bigl(\sum_{j=1}^d \gamma_j \widehat\mu_j(z)\Bigr)e_{O}\,,
\end{equation}
where we let
\begin{equation}
\label{muuu}
l_Y:=m_Y^{-1}L_Y, \quad m_Y:= \prod_{y\in {\rm path}(Y,O)} \big(\widetilde W_y\big)^{-1/2},
\end{equation}
and ${\rm path} (Y,O)$ is the non-self-intersecting path connecting $Y$ and $O$. Conditions (C) in assumption \eqref{1.9} imply that  $\widetilde V$ and $\widetilde W$ are bounded. Thus,  $\cal{J}_{\vec{\kappa}}$ is bounded and self-adjoint on $\ell^2(\cal{V})$. 

\begin{remark}
In recent papers, see, e.g., \cite{walt1,VA11}, the recursion parameters $\{a_{\vec{n},j},b_{\vec{n},j}\}$ were computed exactly for some classical weights. In many of these cases,  measures $\{\mu_j\}$ were not compactly supported and at least one of the conditions in (C), \eqref{1.9} was violated. However, our construction can still go through for many of these situations resulting in Jacobi matrix which defines unbounded and formally symmetric operator. We illustrate it with the classical  example of multiple Hermite polynomials defined by absolutely continuous measures given by the Gaussian weights
\[
\mu_j'=e^{-x^2+c_jx}, \quad c_j\ne c_l\,\, {\rm if}\,\, j\ne l,\quad  \quad 1\le j\le  d.
\]
The formula for  Hermite multiple orthogonal polynomials can be written exactly \cite{Soro,Apt_TAMS,w1} and it is known \cite{VA11} that 
\[
b_{\vec{n},j}=c_j/2, \quad a_{\vec{n},j}=n_j/2\,.
\]
Since $a_{\vec{n},j}>0$ for $\vec{n}\in \mathbb{N}^d$, we can repeat our construction to define formally symmetric operators $\cal{J}_{\vec{\kappa}}$ on infinite tree $\cal{T}$. The function $\widetilde{W}_Y$ used in its definition can grow as fast as $\sqrt{|Y|}$ at infinity so $\cal{J}_{\vec{\kappa}}$ is  unbounded in $\ell^2(\cal{V})$. Studying  defect indexes of $\cal{J}_{\vec{\kappa}}$ and existence of self-adjoint extensions in $\ell^2(\cal{V})$ are interesting problems but we choose not to pursue them in this paper. 
\end{remark}

\section{Jacobi matrices on trees and Angelesco systems}
\label{sec:JMAS}

 We continue our discussion for the case when $\vec{\mu}$ forms an Angelesco system (AS).  The foundational result for this section is the following theorem.
\begin{theorem}\label{me1}
If $\vec{\mu}$ forms an Angelesco system, then conditions \eqref{1.9} are satisfied.
\end{theorem}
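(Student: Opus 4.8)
The plan is to verify the three conditions \eqref{1.9}(A), (B), (C) separately, using throughout that the intervals $\Delta_1<\cdots<\Delta_d$ are compact and pairwise disjoint and that each $\supp\mu_j$ is infinite. For (A) I would run the classical sign-change argument of Angelesco: given $\vec{n}\in\Z_+^d$ and a nonzero polynomial $P$ with $\deg P\le|\vec{n}|$ satisfying \eqref{1.2}, if $P$ had fewer than $n_j$ sign changes in the interior of $\Delta_j$ one could multiply it by a polynomial of degree $\le n_j-1$ vanishing at those sign changes to get a function of constant sign on $\Delta_j$, contradicting $\int Px^l\,d\mu_j=0$ for $l\le n_j-1$; hence $P$ has at least $n_j$ zeros in the interior of $\Delta_j$, and, the $\Delta_j$ being disjoint, at least $|\vec{n}|$ zeros in all. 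Thus $\deg P=|\vec{n}|$ with exactly $n_j$ simple zeros in the interior of each $\Delta_j$; applied to the difference of two monic type~II polynomials for $\vec{n}$ this forces them to coincide, so $\vec{n}$ is normal and, by the equivalence stated after \eqref{sad1}, $\vec\mu$ is perfect. This step also records the zero localization of $P_{\vec{n}}$ needed below.

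For (B) I would pair the recurrence \eqref{1.7} (for a fixed $j$ with $n_j>0$) with the functional $f\mapsto\int f(x)x^{n_j-1}\,d\mu_j(x)$. Using \eqref{1.2}, this functional annihilates $P_{\vec{n}+\vec{e}_j}$, $P_{\vec{n}}$, and every $P_{\vec{n}-\vec{e}_l}$ with $l\ne j$, which leaves $a_{\vec{n},j}=h_{\vec{n},j}/h_{\vec{n}-\vec{e}_j,j}$, where $h_{\vec{m},j}:=\int P_{\vec{m}}(x)x^{m_j}\,d\mu_j(x)$ — the multidimensional analogue of $a_{n-1}=\|P_n\|_\mu^2/\|P_{n-1}\|_\mu^2$, consistent with the formulas \eqref{por1}, \eqref{por} of Appendix~\ref{appA}. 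Replacing $x^{m_j}$ by the monic polynomial $\omega_{\vec{m},j}$ of degree $m_j$ with zeros at the $m_j$ zeros of $P_{\vec{m}}$ in $\Delta_j$ (allowed by \eqref{1.2}) and factoring $P_{\vec{m}}=\omega_{\vec{m},j}\prod_{\zeta\notin\Delta_j}(x-\zeta)$, one sees that $h_{\vec{m},j}=\int\omega_{\vec{m},j}^2\big(\prod_{\zeta\notin\Delta_j}(x-\zeta)\big)d\mu_j$ is nonzero with the constant sign $(-1)^{m_{j+1}+\cdots+m_d}$ that the last factor has on $\Delta_j$. Since subtracting $\vec{e}_j$ does not change the coordinates of index exceeding $j$, the signs of $h_{\vec{n},j}$ and $h_{\vec{n}-\vec{e}_j,j}$ agree and cancel, giving $a_{\vec{n},j}>0$.

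For (C) I would combine the confinement of all zeros to the fixed compact set $K:=\bigcup_j\Delta_j$ (immediate from (A)) with interlacing. Comparing the coefficients of $x^{|\vec{n}|}$ and of $x^{|\vec{n}|-1}$ on the two sides of \eqref{1.7} yields $b_{\vec{n},j}=\sigma_1(\vec{n}+\vec{e}_j)-\sigma_1(\vec{n})$ and $\sum_{l=1}^d a_{\vec{n},l}=\tfrac12\big(\sigma_2(\vec{n}+\vec{e}_j)-\sigma_2(\vec{n})\big)-\tfrac12 b_{\vec{n},j}^2$, where $\sigma_1(\vec{m})$, $\sigma_2(\vec{m})$ are the sum of the zeros and the sum of the squares of the zeros of $P_{\vec{m}}$. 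I would then invoke the interlacing of the zeros of $P_{\vec{n}}$ and $P_{\vec{n}+\vec{e}_j}$ on every $\Delta_i$ — the standard consequence of the argument of (A) applied to the pencil $P_{\vec{n}+\vec{e}_j}+tP_{\vec{n}}$: passing from $\vec{n}$ to $\vec{n}+\vec{e}_j$ inserts one new zero into $\Delta_j$ and merely interlaces the zeros inside each $\Delta_i$, $i\ne j$ — to conclude that both $\sigma_1(\vec{n}+\vec{e}_j)-\sigma_1(\vec{n})$ and $\sigma_2(\vec{n}+\vec{e}_j)-\sigma_2(\vec{n})$ telescope into sums bounded in absolute value by constants depending only on $\max_{x\in K}|x|$ and the lengths $|\Delta_i|$, uniformly in $\vec{n}$. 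Hence $\{b_{\vec{n},j}\}$ is bounded, and then $0\le\sum_l a_{\vec{n},l}\le\tfrac12\big|\sigma_2(\vec{n}+\vec{e}_j)-\sigma_2(\vec{n})\big|$ shows that $\sum_l a_{\vec{n},l}$, hence each $a_{\vec{n},l}$ (nonnegative by (B)), is bounded, which is (C).

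The hard part is the interlacing invoked in (C): a crude zero count only bounds $b_{\vec{n},j}$ and $\sum_l a_{\vec{n},l}$ by quantities that grow with $|\vec{n}|$, so one genuinely needs the monotone structure whereby incrementing $n_j$ adds a single zero in $\Delta_j$ and displaces the other zeros only within their own intervals. Proving this interlacing carefully (and nailing down the exact moment formula for $a_{\vec{n},j}$, or, should one prefer a Cauchy--Schwarz estimate of $a_{\vec{n},j}$ in the spirit of \eqref{lope}, the right biorthonormalized recurrence) is what I expect to occupy Appendix~\ref{appA}; granting it, (A)--(C) follow along the lines above.
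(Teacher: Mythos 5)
Your argument is correct, and parts (A) and (B) follow the paper's own route: (A) is the classical sign-change/normality argument that the paper cites rather than reproves, and your computation of $a_{\vec n,j}=h_{\vec n,j}/h_{\vec n-\vec e_j,j}$ followed by the sign analysis of the factor of $P_{\vec n}$ supported off $\Delta_j$ is exactly \eqref{por1} and \eqref{loi}. Where you genuinely diverge is (C). The paper bounds $a_{\vec n,j}$ directly from \eqref{loi}: lemma~\ref{loi1} controls the ratio of the ``foreign'' factors on $\Delta_j$ by $|\Delta_{\max}|/g_{\min}$ via the domination property of lemma~\ref{lem:a4}, and lemmas~\ref{li3}--\ref{li5} (extremality of orthogonal polynomial norms with respect to the varying measures $\sigma_{\vec n}^{(j)}$) supply the factor $(|\Delta_j|/2)^2$; the bound on $b_{\vec n,j}$ is then obtained through type~I polynomials, using \eqref{por}, the auxiliary polynomials $D_{\vec n},M_{\vec n}$ of lemma~\ref{li7}, the normalization identity \eqref{sd_g6}, and the type~I interlacing of lemma~\ref{li8}. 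You instead read off $b_{\vec n,j}$ and $\sum_l a_{\vec n,l}$ from the two leading coefficients of \eqref{1.7} as differences of the first and second power sums of the zeros of $P_{\vec n+\vec e_j}$ and $P_{\vec n}$, and bound these by the type~II interlacing alone (your coefficient identities check out, including the $-\tfrac12 b_{\vec n,j}^2$ term). This is more economical --- it needs only the first claim of lemma~\ref{lem:a4} and dispenses entirely with the type~I machinery and the varying-measure orthogonality --- and it is close in spirit to the paper's own alternative remark-proof of the $b$-bound and to lemma~\ref{sd_monot}. What it buys less of: you only bound the sum $\sum_l a_{\vec n,l}$ and must invoke positivity from (B) to control each term, whereas the paper's route bounds each $a_{\vec n,j}$ individually with an explicit constant $(|\Delta_{\max}|/g_{\min})^{d-1}(|\Delta_j|/2)^2$; and, as you correctly flag, the whole of (C) in your version rests on the per-interval interlacing under $\vec n\mapsto\vec n+\vec e_j$, which is precisely the content the paper isolates in lemma~\ref{lem:a4} and proves by the pencil argument you sketch.
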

Its proof is given in Appendix~\ref{appA}.

\subsection{JMs on infinite trees for AS: spectral measures} 

Here we discuss further connections between $\cal{J}_{\vec{\kappa}}$ and MOPs of the first type. Recall that $\Delta_1< \Delta_2<\ldots<\Delta_d$.

\begin{proposition}
\label{prop:4.2}
Let \( l(z) \) be given by \eqref{muuu} and the coefficients \( \gamma_i \) be given by \eqref{my-gammai1}--\eqref{my-gammai2}. Then
\begin{equation}
\label{sd_f4}
l(z)=\Bigl( \sum_{j=1}^d\gamma_j \widehat\mu_j(z) \Bigr)(\cal{J}_{\vec{\kappa}}-z)^{-1}e_{O}
\end{equation}
holds as an identity on the Hilbert space $\ell^2(\cal{V})$ for all $z\notin  (\cup_{j=1}^d \supp\,\mu_j) \cup  \sigma(\cal{J}_{\vec{\kappa}})$. In particular,
\begin{equation}
\label{sd_f5}
G(Y,O,z)= \Bigl(   \sum_{j=1}^d \gamma_j \widehat\mu_j(z)\Bigr)^{-1}  l_Y(z)\,,
\end{equation}
where \( G(Y,O,z) \) is the Green's function for \( \cal J_{\vec \kappa} \).
\end{proposition}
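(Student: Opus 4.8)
The plan is to derive \eqref{sd_f4} from the already-established identity \eqref{sd_f33}, namely $(\cal J_{\vec\kappa}-z)l = \bigl(\sum_j\gamma_j\widehat\mu_j(z)\bigr)e_O$, by inverting the operator $\cal J_{\vec\kappa}-z$. The only thing one needs to check for this to be legitimate is that $l=l(z)$, as defined by \eqref{muuu}, actually belongs to $\ell^2(\cal V)$; once that is known, applying $(\cal J_{\vec\kappa}-z)^{-1}$ to both sides of \eqref{sd_f33} (valid since $z\notin\sigma(\cal J_{\vec\kappa})$ and $\cal J_{\vec\kappa}$ is bounded self-adjoint) yields \eqref{sd_f4} immediately, and then pairing \eqref{sd_f4} with $e_Y$ and recalling $G(Y,O,z)=\langle(\cal J_{\vec\kappa}-z)^{-1}e_O,e_Y\rangle$ gives \eqref{sd_f5} after dividing by the scalar $\sum_j\gamma_j\widehat\mu_j(z)$ (which is nonzero, or at least the identity is understood for those $z$ where it is; in any case one can multiply through to avoid the issue).

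So the real content is the $\ell^2$-bound on $l(z)$, and this is where I would spend the effort. I would estimate $|l_Y(z)| = m_Y^{-1}|L_{\Pi(Y)}(z)|$ for $z$ outside $\cup_j\supp\mu_j$ and outside $\sigma(\cal J_{\vec\kappa})$. Here $m_Y^{-1} = \prod_{y\in\mathrm{path}(Y,O)}(\widetilde W_y)^{1/2}$ is a product of square roots of the $a$-coefficients along the path from the root to $Y$, and $L_\n(z) = \int Q_\n(x)/(z-x)$ is the function of the second kind of the linear form. The key analytic input should be a decay estimate on $L_\n(z)$ in $|\n|$, analogous to \eqref{sden1_1}: since $Q_\n$ is orthogonal to all powers up to $|\n|-2$, one has $L_\n(z) = z^{-|\n|}\int (x/z)^{?}\cdots$, more precisely $L_\n(z) = z^{-|\n|+1}\int x^{|\n|-1}Q_\n(x)/(z-x)\cdots$ after using orthogonality, giving a bound like $|L_\n(z)|\le C R^{|\n|}|z|^{-|\n|}\|Q_\n\|$ with $R$ governing $\supp\mu$, together with the normalization \eqref{n_2}. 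One then needs that $m_Y^{-1}$ times this decays geometrically along each branch of the tree at a rate that beats the branching number $d$ of the tree, i.e. that $\sum_{Y\in\cal V}|l_Y(z)|^2<\infty$. The bound $\sup a_{\n,j}<\infty$ from condition (C) controls $m_Y^{-1}$ by $C^{|\Pi(Y)|}$, and the number of vertices $Y$ with $\Pi(Y)=\n$ is at most the number of lattice paths, which is polynomial times exponential in $|\n|$; so for $|z|$ large enough the geometric decay $ (R/|z|)^{|\n|}$ wins, giving $\ell^2$ membership for large $|z|$, and then analytic continuation in $z$ (the resolvent $(\cal J_{\vec\kappa}-z)^{-1}$ is analytic off the spectrum) extends \eqref{sd_f4} to all admissible $z$.

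The main obstacle, I expect, is making the combinatorics of the tree match the decay rate: one must be careful that the product $m_Y^{-1}$ over a path and the number of paths landing on a given lattice point $\n$ together produce a bound of the form (constant)$^{|\n|}$ rather than something growing faster, and that the function-of-second-kind estimate is uniform enough in the multi-index. A clean way to organize this is to fix $z$ with $|z|>2R$ (where $R$ bounds $\cup_j\supp\mu_j$), prove the $\ell^2$ bound there by the crude "geometric decay beats exponential count" argument sketched above, conclude \eqref{sd_f4} and hence \eqref{sd_f5} on that region, and then invoke analyticity of both sides in $z$ on the connected domain $\mathbb C\setminus\bigl((\cup_j\supp\mu_j)\cup\sigma(\cal J_{\vec\kappa})\bigr)$ — noting $l_Y(z)$ is analytic there entrywise and the $\ell^2$-valued function $(\cal J_{\vec\kappa}-z)^{-1}e_O$ is analytic there — to propagate the identity to the full stated range. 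A secondary, minor point is to confirm that $\sum_j\gamma_j\widehat\mu_j(z)\not\equiv 0$ so that \eqref{sd_f5} is meaningful; this follows from \eqref{LnInfty}, since $\sum_j\gamma_j\widehat\mu_j$ is, up to the polynomial correction $A_O^{(0)}$-type term at the root, the value $L_O(z) = L_{\vec 1}(z)$ which behaves like $z^{-d}$ at infinity and is in particular not identically zero.
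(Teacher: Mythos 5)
Your proposal follows essentially the same route as the paper: establish $l(z)\in\ell^2(\mathcal V)$ for $|z|$ large via the decay bound on $L_{\vec n}$ (the paper cites $|L_{\vec n}(z)|\le(|z|-R)^{-|\vec n|}$ from its Appendix A) together with the uniform boundedness of $\widetilde W$, upgrade \eqref{sd_f33} to a Hilbert-space identity there, and then extend to all admissible $z$ by analyticity of both sides. The only difference is that you re-sketch the $L_{\vec n}$ estimate from orthogonality rather than quoting the appendix lemma, and you make the tree-counting explicit where the paper leaves it implicit; the argument is the same.
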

\begin{proof}
Let \( R>0 \) be such that $\supp\, \mu_{j}\subseteq [-R,R]$ for all $j$. It holds that
\[
|L_\n(z)|\le  (|z|-R)^{-|\vec{n}|}, \quad |z|>R,
\]
see \eqref{sd_f2} in Appendix~\ref{appA}. This estimate along with boundedness of $\widetilde W$ implies that $l\in \ell^2(\cal{V})$ provided that $|z|>R_1$, where $R_1$ is sufficiently large. Therefore, we can conclude that \eqref{sd_f33} is satisfied not only formally as a functional identity, but also as an identity on the Hilbert space $\ell^2(\cal{V})$. This, in particular, implies that \eqref{sd_f4} holds for $|z|>R_1$, in which case the functions in both left-hand and right-hand sides are in $\ell^2(\cal{V})$. Now, since for every $Y\in \cal{V}$,  $l_Y(z)$ is analytic away from $\cup_{j=1}^d \supp\,\mu_j$ and $\langle(\cal{J}_{\vec{\kappa}}-z)^{-1}e_{O}, e_{Y}\rangle$ is analytic away from $\sigma(\cal{J}_{\vec{\kappa}})$, they match on the common domain as claimed. Relation \eqref{sd_f5} follows straight from the definition of the Green's function, see the beginning of Section~\ref{sec:CJM}.
\end{proof}

Let the spectral measure of $e_{O}$ with respect to the operator $\cal{J}_{\vec{\kappa}}$ be denoted by $\upsilon_{\vec{\kappa}}$ and recall that Stieltjes transform is defined by
\begin{equation}
\label{sd_f6}
\Theta_{\vec{\kappa}}(z):=G(O,O,z)=:\int_{\mathbb{R}}\frac{d\upsilon_{\vec{\kappa}}(x)}{x-z}\,.
\end{equation}
Formula \eqref{sd_f5}  allows to obtain the following representation for $\Theta_{\vec{\kappa}}$:
\[
\Theta_{\vec{\kappa}}(z)=\frac{L_O(z)}{  \sum_{i=1}^d \gamma_i \widehat\mu_j(z)}=\frac{  \sum_{i=1}^d \widetilde\gamma_i \widehat\mu_i(z)}{  \sum_{i=1}^d \gamma_i \widehat\mu_i(z)}\,,
\]
where the coefficients $\widetilde\gamma_i$ form the solution of the linear system
\begin{equation}
\label{my-gammai3}
\left(\begin{matrix} 0 \smallskip \\ \vdots \smallskip \\ 0 \smallskip \\ 1\end{matrix}\right) = \left(\begin{matrix} \int d\mu_1(t) & \cdots & \int d\mu_d(t) \smallskip \\ \vdots & \ddots & \vdots \smallskip \\ \int t^{d-2}d\mu_1(t) & \cdots & \int t^{d-2}d\mu_d(t) \bigskip \\ \int t^{d-1}d\mu_1(t) & \cdots & \int t^{d-1}d\mu_d(t) \end{matrix}\right) \left(\begin{matrix} \widetilde\gamma_1 \smallskip \\ \vdots \smallskip \\ \widetilde\gamma_{d-1} \smallskip \\ \widetilde\gamma_d \end{matrix}\right).
\end{equation}
In the case $d=2$, the formulas are particularly easy. 

\begin{proposition}
\label{sd_g3}
If $(\mu_1,\mu_2)$ forms an Angelesco system, then
\begin{equation}
\label{spm}
\Theta_{\vec{\kappa}}(z)= \Xi(\mu_1,\mu_2)\frac{\widehat \mu_1(z)\|\mu_2\|-\widehat \mu_2(z)\|\mu_1\|}{\kappa_2
\widehat\mu_1(z) \|\mu_2\|+\kappa_1\widehat\mu_2(z) \|\mu_1\|}, \quad \Xi(\mu_1,\mu_2) := \left(\int_{\mathbb{R}} t\left( \frac{d\mu_2(t)}{\|\mu_2\|}-\frac{
d\mu_1(t)}{\|\mu_1\|}\right)\right)^{-1}.
\end{equation}
\end{proposition}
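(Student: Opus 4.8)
The plan is to specialize the general formula
\[
\Theta_{\vec{\kappa}}(z)=\frac{\sum_{i=1}^d \widetilde\gamma_i \widehat\mu_i(z)}{\sum_{i=1}^d \gamma_i \widehat\mu_i(z)}
\]
to the case $d=2$, where both linear systems \eqref{my-gammai2}--\eqref{my-gammai3} become scalar or $2\times 2$ and can be solved by hand. First I would compute the $\gamma_i$: for $d=2$, the matrix in \eqref{my-gammai2} retains only the last row, namely $\big(\int d\mu_1,\int d\mu_2\big)=(\|\mu_1\|,\|\mu_2\|)$, and the normalization $A_{\vec 1-\vec e_j}^{(j)}=0$ forces $A_{\vec 1-\vec e_1}^{(1)}=0$ and $A_{\vec 1-\vec e_2}^{(2)}=0$. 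So from $\|\mu_1\|A_{\vec 1-\vec e_2}^{(1)}+\|\mu_2\|A_{\vec 1-\vec e_2}^{(2)}=1$ with $A_{\vec 1-\vec e_2}^{(2)}=0$ we get $A_{\vec 1-\vec e_2}^{(1)}=1/\|\mu_1\|$, and symmetrically $A_{\vec 1-\vec e_1}^{(2)}=1/\|\mu_2\|$. Then \eqref{my-gammai1} gives $\gamma_1=-\kappa_2 A_{\vec 1-\vec e_2}^{(1)}=-\kappa_2/\|\mu_1\|$ and $\gamma_2=-\kappa_1 A_{\vec 1-\vec e_1}^{(2)}=-\kappa_1/\|\mu_2\|$ (the $j=1$ term contributes zero to $\gamma_1$ and the $j=2$ term contributes zero to $\gamma_2$). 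Hence the denominator is $-\big(\kappa_2\widehat\mu_1(z)/\|\mu_1\| + \kappa_1\widehat\mu_2(z)/\|\mu_2\|\big)$, which matches the stated denominator up to the overall factor $-1/(\|\mu_1\|\|\mu_2\|)$.

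Next I would compute the $\widetilde\gamma_i$ from \eqref{my-gammai3}, which for $d=2$ is the genuine $2\times 2$ system
\[
\left(\begin{matrix}\|\mu_1\| & \|\mu_2\| \\ \int t\,d\mu_1(t) & \int t\,d\mu_2(t)\end{matrix}\right)\left(\begin{matrix}\widetilde\gamma_1 \\ \widetilde\gamma_2\end{matrix}\right)=\left(\begin{matrix}0\\1\end{matrix}\right).
\]
By Cramer's rule, with $D:=\|\mu_1\|\int t\,d\mu_2(t)-\|\mu_2\|\int t\,d\mu_1(t)$, one gets $\widetilde\gamma_1=-\|\mu_2\|/D$ and $\widetilde\gamma_2=\|\mu_1\|/D$. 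Note that $D=\|\mu_1\|\|\mu_2\|\int t\big(d\mu_2(t)/\|\mu_2\|-d\mu_1(t)/\|\mu_1\|\big)=\|\mu_1\|\|\mu_2\|/\Xi(\mu_1,\mu_2)$, so $D\neq 0$ precisely because the two measures have distinct barycenters, which is guaranteed by the Angelesco condition $\Delta_1<\Delta_2$ (the supports are disjoint, so the mean of $\mu_2$ strictly exceeds the mean of $\mu_1$); this is also what makes the $2\times2$ system nonsingular. Thus the numerator $\sum_i\widetilde\gamma_i\widehat\mu_i(z)$ equals $\big(\|\mu_1\|\widehat\mu_2(z)-\|\mu_2\|\widehat\mu_1(z)\big)/D$.

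Finally I would form the ratio. Dividing numerator by denominator, the factors of $D$ and the renormalizations combine: the ratio is
\[
\frac{\big(\|\mu_1\|\widehat\mu_2(z)-\|\mu_2\|\widehat\mu_1(z)\big)/D}{-\big(\kappa_2\widehat\mu_1(z)/\|\mu_1\|+\kappa_1\widehat\mu_2(z)/\|\mu_2\|\big)}.
\]
Multiplying top and bottom by $-\|\mu_1\|\|\mu_2\|$ and using $D=\|\mu_1\|\|\mu_2\|/\Xi$ turns the numerator into $\Xi(\mu_1,\mu_2)\big(\widehat\mu_1(z)\|\mu_2\|-\widehat\mu_2(z)\|\mu_1\|\big)$ and the denominator into $\kappa_2\widehat\mu_1(z)\|\mu_2\|+\kappa_1\widehat\mu_2(z)\|\mu_1\|$, which is exactly \eqref{spm}. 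I do not expect a serious obstacle here — the computation is elementary linear algebra once the $d=2$ reduction of \eqref{my-gammai2} is read correctly (the subtle point being that for $d=2$ only the last row of that matrix survives, and that the normalization kills the diagonal entries $A_{\vec 1-\vec e_j}^{(j)}$). The only genuinely substantive remark is the nonvanishing of $D$, i.e.\ that $\Xi(\mu_1,\mu_2)$ is well defined, which follows from disjointness of the supports in an Angelesco system; I would state this explicitly since it is what licenses division and ensures \eqref{spm} makes sense. One should also note that \eqref{spm} is an identity of analytic functions valid for $z$ outside $\supp\,\mu_1\cup\supp\,\mu_2\cup\sigma(\cal J_{\vec\kappa})$, inherited from Proposition~\ref{prop:4.2}.
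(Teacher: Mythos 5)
Your proposal is correct and follows essentially the same route as the paper: specialize the general representation $\Theta_{\vec\kappa}=\sum_i\widetilde\gamma_i\widehat\mu_i/\sum_i\gamma_i\widehat\mu_i$ to $d=2$, solve the two small linear systems \eqref{my-gammai2} and \eqref{my-gammai3} to get $\gamma_1=-\kappa_2/\|\mu_1\|$, $\gamma_2=-\kappa_1/\|\mu_2\|$, $\widetilde\gamma_1=-\Xi/\|\mu_1\|$, $\widetilde\gamma_2=\Xi/\|\mu_2\|$, and form the ratio. The paper merely states these values, whereas you spell out the Cramer's-rule computation and add the (correct, and worth stating) observation that $\Xi(\mu_1,\mu_2)$ is well defined because the disjointness of $\Delta_1$ and $\Delta_2$ forces the barycenters to differ.
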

\begin{proof}
We get from \eqref{my-gammai3} that
\[
 \widetilde\gamma_1 = -\Xi(\mu_1,\mu_2)\|\mu_1\|^{-1} \quad \text{and} \quad \widetilde\gamma_2 = \Xi(\mu_1,\mu_2)\|\mu_2\|^{-1},
\]
and we get from \eqref{my-gammai1}--\eqref{my-gammai2} that
\begin{equation}
\label{my-gammas}
\gamma_1 = - \kappa_2A_{\vec e_1}^{(1)} = -\kappa_2\|\mu_1\|^{-1} \quad \text{and} \quad \gamma_2 = - \kappa_1A_{\vec e_2}^{(2)} = -\kappa_1\|\mu_2\|^{-1},
\end{equation}
which clearly finishes the proof of the proposition.
\end{proof}

This proposition has many applications. For instance, given $\mu_1$ and $\mu_2$, \eqref{spm} allows us to find $\upsilon_{\vec{\kappa}}$. For example, let ${\vec{\kappa}}=(0,1)$. We can take the weak--$(\ast)$ limit $\lim_{\epsilon\to +0}\Im\Theta_{(0,1)}(x+i\epsilon)$, use properties of the Poisson kernel, and write
\begin{equation}
\label{kuk1}
\pi\upsilon_{(0,1)}=\Xi(\mu_1,\mu_2)  \Im^+\left(1-\frac{\|\mu_1\|\widehat\mu_2}{\|\mu_2\|\widehat\mu_1}\right) \\ =
\Xi(\mu_1,\mu_2) \frac{\|\mu_1\|}{\|\mu_2\|}  \left(\chi_{\Delta_1}\widehat \mu_2\Im^+\Bigl(-\frac{1}{\widehat\mu_1}\Bigr)-\chi_{\Delta_2} \widehat\mu_1^{-1}\Im^+\widehat\mu_2 \right),
\end{equation}
where $\Im^+ F$ denotes the weak--$(\ast)$ limit of the imaginary part of functions $F(x+i\epsilon)$ when $\epsilon\to +0$, and \( \chi_E \) is the characteristic functions of a set \(E\). Notice that since $\Delta_1<\Delta_2$ and $\Delta_1,\Delta_2$ do not intersect, $\widehat\mu_2$ is continuous and negative on $\Delta_1$ and  $\widehat\mu_1$ is continuous and positive on $\Delta_2$. Moreover, from the standard properties of convolution with the Poisson kernel, we get \( -\Im^+\widehat\mu_j=\pi\mu_j\). As a corollary, we get 
\[
\supp \, \mu_2\cup \supp\, \Im^+\big(\widehat\mu_1^{-1}\big)\subseteq \sigma(\cal{J}_{(0,1)}).
\]

If the both measures $\mu_1,\mu_2$ are absolutely continuous, given by the weights $w_1,w_2$, respectively, where in addition $w_i>0$ a.e. on $\Delta_i$, \( i\in\{1,2\} \), then we have $\Delta_1\cup\Delta_2\subseteq \sigma(\cal{J}_{(0,1)})$. If we also assume that $w^{-1}_{1}\in L^\infty(\Delta_{1})$, then $\upsilon_{(0,1)}$ is absolutely continuous with respect to Lebesgue measure and
\[
\upsilon_{(0,1)}'=  \Xi(\mu_1,\mu_2)\frac{\|\mu_1\|}{\|\mu_2\|}  \frac{\widehat\mu_1w_2-\widehat\mu_2w_1}{|\widehat\mu_1|^2}\,.
\]
Analogously to the one-dimensional case, the inverse spectral problem can be solved using \eqref{kuk1}.

\begin{proposition}
\label{sd_uni}
Assume that $(\mu_1,\mu_2)$  defines an Angelesco system. If  
\[
\|\mu_1\|, \quad\|\mu_2\|, \quad \Xi(\mu_1,\mu_2), \quad \text{and} \quad \upsilon_{\vec \kappa}
\] 
are known, then $\mu_1,\mu_2$, and $\cal{J}_{\vec \kappa}$ can be found uniquely.
\end{proposition}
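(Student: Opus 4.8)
The plan is to reduce the statement to a scalar reconstruction problem for the Cauchy transforms $\widehat\mu_1,\widehat\mu_2$, and then to invoke the fact — established by Theorem~\ref{me1} and proved in Appendix~\ref{appA} — that the recurrence coefficients $\{a_{\vec n,j},b_{\vec n,j}\}$, and hence, via \eqref{ooo}, the operator $\cal J_{\vec\kappa}$ itself (the multi-index $\vec\kappa$ being part of the given data), are uniquely determined by $\vec\mu$. So it suffices to recover $\mu_1$ and $\mu_2$.

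First, the measure $\upsilon_{\vec\kappa}$ determines its Stieltjes transform $\Theta_{\vec\kappa}(z)=G(O,O,z)=\int d\upsilon_{\vec\kappa}(x)/(x-z)$, see \eqref{sd_f6}, as an explicit function analytic off $\supp\,\upsilon_{\vec\kappa}$ (hence everywhere it is defined, by continuation). Next I would feed the known quantities $\Theta_{\vec\kappa}$, $\Xi:=\Xi(\mu_1,\mu_2)$, $\|\mu_1\|$, $\|\mu_2\|$ into Proposition~\ref{sd_g3}. Since the right-hand side of \eqref{spm} is homogeneous of degree zero in the pair $(\widehat\mu_1,\widehat\mu_2)$, clearing denominators turns \eqref{spm} into a single linear relation between $\widehat\mu_1$ and $\widehat\mu_2$ with coefficients built from the known data; solving it gives
\[
\frac{\widehat\mu_1(z)}{\widehat\mu_2(z)}=\frac{\|\mu_1\|\bigl(\kappa_1\Theta_{\vec\kappa}(z)+\Xi\bigr)}{\|\mu_2\|\bigl(\Xi-\kappa_2\Theta_{\vec\kappa}(z)\bigr)}=:\rho(z),
\]
a known meromorphic function on $\overline{\mathbb C}\setminus(\Delta_1\cup\Delta_2)$ (one checks, using $\kappa_1+\kappa_2=1$ and the behavior at $\infty$, that the denominator is not identically zero). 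Note that only this \emph{ratio}, not $\widehat\mu_1$ and $\widehat\mu_2$ separately, is directly produced by the spectral data.

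The crux is then to show that $\rho$ together with the prescribed masses pins down $\widehat\mu_1$ and $\widehat\mu_2$ individually. Here I would use that for an Angelesco system with $\Delta_1<\Delta_2$ each $\widehat\mu_j$ is analytic and zero-free on $\overline{\mathbb C}\setminus\Delta_j$ apart from a simple zero at $\infty$ with $z\widehat\mu_j(z)\to\|\mu_j\|$: indeed $\widehat\mu_j$ maps $\mathbb C^{\pm}$ into $\mathbb C^{\mp}$, and on each component of $\mathbb R\setminus\Delta_j$ it is strictly monotone with a fixed sign. Given another Angelesco system $(\nu_1,\nu_2)$ producing the same data, Proposition~\ref{sd_g3} applied to it yields the same $\rho$ and the same masses, so $\widehat\mu_1\widehat\nu_2=\widehat\mu_2\widehat\nu_1$, i.e. $\phi:=\widehat\mu_1/\widehat\nu_1=\widehat\mu_2/\widehat\nu_2$. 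The first expression shows $\phi$ is analytic on $\overline{\mathbb C}\setminus\Delta_1$ (the denominator is zero-free there and the simple zeros at $\infty$ cancel), the second that it is analytic on $\overline{\mathbb C}\setminus\Delta_2$; since $\Delta_1\cap\Delta_2=\varnothing$, $\phi$ is analytic on all of $\overline{\mathbb C}$, hence constant, with $\phi(\infty)=\|\mu_1\|/\|\nu_1\|=1$. Therefore $\widehat\mu_j=\widehat\nu_j$, and by the Stieltjes inversion formula $\mu_j=-\pi^{-1}\Im^+\widehat\mu_j=\nu_j$; the convention $\Delta_1<\Delta_2$ resolves which factor is $\mu_1$ and which is $\mu_2$ from the jump locations of $\rho$.

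Finally, with $\mu_1,\mu_2$ recovered, the coefficients $\{a_{\vec n,j},b_{\vec n,j}\}$ are determined by $\vec\mu$ through formulas \eqref{por1}--\eqref{por}, hence so are $\widetilde V$ and $\widetilde W$, and therefore $\cal J_{\vec\kappa}$ by \eqref{ooo}. I expect the third paragraph to be the main obstacle: because only the ratio $\rho$ is available from the data, the argument must rest on the zero-freeness of $\widehat\mu_j$ off $\Delta_j$ together with the disjointness $\Delta_1\cap\Delta_2=\varnothing$, which are exactly what converts the scalar factorization problem into a Liouville-type argument.
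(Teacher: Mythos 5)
Your proposal is correct and begins exactly as the paper does: $\upsilon_{\vec\kappa}$ determines $\Theta_{\vec\kappa}$, and Proposition~\ref{sd_g3} then yields the ratio of the two Markov functions as an explicit function of the given data (your algebra here is right; the paper's displayed formula for $h=\widehat\mu_2/\widehat\mu_1$ has a sign slip --- the denominator should read $\Xi+\kappa_1\Theta_{\vec\kappa}$, not $\Xi-\kappa_1\Theta_{\vec\kappa}$). Where you genuinely diverge is the crucial step of passing from the ratio to the individual functions. The paper turns this into a scalar multiplicative boundary-value problem: since $\widehat\mu_1$ has no jump across $\Delta_2$, the quotient $\widehat\mu_2^+/\widehat\mu_2^-$ equals $h^+/h^-$ on $\Delta_2$, and a Hardy-space lemma (if $G$ is analytic in $\mathbb{D}$ with $\Re G>0$ and $G/\overline{G}$ is known a.e.\ on $\mathbb{T}$, then $\log G\in H^p$ and $\arg G$ determines $\log|G|$ up to an additive constant) recovers $\widehat\mu_2$ up to a positive factor, which $\|\mu_2\|$ then fixes. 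You instead compare two candidate systems and run a Liouville argument on $\phi=\widehat\mu_1/\widehat\nu_1=\widehat\mu_2/\widehat\nu_2$, exploiting the zero-freeness of a Markov function off the convex hull of its support. Your route is more elementary (no Hardy classes, no harmonic conjugation) but leans harder on the Angelesco geometry: since the competing system may a priori have different convex hulls $\Delta_j':=\mathrm{Ch}(\supp\nu_j)$, the two representations give analyticity of $\phi$ only off $\Delta_1\cup\Delta_1'$ and off $\Delta_2\cup\Delta_2'$ respectively, so you should add a word on why $(\Delta_1\cup\Delta_1')\cap(\Delta_2\cup\Delta_2')=\varnothing$ before invoking Liouville; the paper's proof makes an analogous tacit identification of jump sets when it declares $h^+/h^-$ ``given a.e.\ on $\Delta_2$'', so this is a shared imprecision rather than a defect of your approach alone. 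The paper's argument, being local to $\Delta_2$, is somewhat more robust to such support mismatches; yours buys brevity and avoids the $H^p$ machinery. Your reductions at either end --- recovering the measures from $\widehat\mu_j$ by Stieltjes inversion and then $\cal J_{\vec\kappa}$ from \eqref{por1}--\eqref{por} and \eqref{ooo} --- are exactly as in the paper.
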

\begin{proof}
Set $h:= \widehat\mu_2/\widehat\mu_1$. Then it follows from \eqref{spm} that
\[
h = \frac{\|\mu_2\|}{\|\mu_1\|}\cdot\frac{\Xi(\mu_1,\mu_2)-\kappa_2\Theta_{\vec\kappa}}{\Xi(\mu_1,\mu_2)-\kappa_1\Theta_{\vec\kappa}}.
\]
That is, \( h \) is uniquely defined given \(\|\mu_1\|,\|\mu_2\|, \Xi(\mu_1,\mu_2)\), and \(\upsilon_{\vec \kappa}\). Since $\widehat\mu_1$ is analytic on $\Delta_2$, the problem of finding $\widehat\mu_i$ (and then $\mu_i$), \( i\in\{1,2\} \), can be reduced
to finding $\widehat\mu_2$ from the equation
\[
\widehat\mu_2^+/{\widehat\mu_2^-}={h^+}/{h^-}\,,
\]
where the right-hand side is given a.e. on  $\Delta_2$.  Let \( \widehat\mu_2^+ \) and \( \widehat\mu_2^- \) be the upper and lower non-tangential limits of $\widehat\mu_2$ on the real line, which exist a.e. because $\widehat\mu_2$ is in the Nevanlinna class. Notice also that  $\widehat\mu_2^-=\overline{\widehat\mu_2^+}$  and these functions are different from zero for a.e. $x\in \Delta_2$. If we map $\mathbb{C^+}$ conformally onto $\mathbb{D}$ and consider $\ic\widehat\mu_2$ instead of $\widehat\mu_2$, then the uniqueness of $\widehat\mu_2$ can be deduced from the following claim:

\smallskip

\noindent
{\em If $G$ is analytic in $\mathbb{D}$, $\Re\, G> 0$ in $\mathbb{D}$ and $G/\overline{G}$ is known for a.e. $z\in \mathbb{T}$, then $G$ is defined uniquely up to multiplication by a positive constant.}

\smallskip

Indeed, consider $H:= \log G$ and notice that $H=\log |G|+\ic\arg\, G, \, |\arg \,G|< \pi/2$. Therefore, $H$ belongs to Hardy classes $H^p(\mathbb{D})$ with any $p<\infty$ and so $\log |G|$ and $H$ can be recovered from $\arg\, G$ uniquely up to adding a real constant. On the other hand,  $G/\overline{G}$ defines $\arg\, G$ uniquely  since $|\arg\, G|\le \pi/2$, which finishes justification of the claim.

Therefore, $\widehat\mu_2$ is known up to multiplication by a positive constant. Since $\widehat\mu_2(z)=\|\mu_2\|z^{-1}+O(|z|^{-2})$ when $|z|\to \infty$ and $\|\mu_2\|$ is given, this constant is uniquely defined.
\end{proof}

\begin{remark}
Proposition~\ref{sd_g3}  can be generalized to any $d>2$ with resulting formulas becoming more cumbersome.
\end{remark}

\begin{remark}
From the construction of the operators $\cal{J}_{\vec{\kappa}}$ it is not a priori clear why $\cal{J}_{\vec\kappa}\neq \cal{J}_{\vec\kappa'}$ if $\vec{\kappa}\neq \vec{\kappa'}$.  However, this follows from \eqref{sd_i1} and lemma~\ref{sd_monot} in Appendix~\ref{appA}.
\end{remark}

\subsection{JMs on infinite trees for AS: branching continued fractions}

The branching continued fraction associated with $\cal{J}_{\vec{\kappa}}$ can be constructed in the following way. Choose $Y^\ast\in \cal{V}$ and consider the infinite homogeneous subtree in $\cal{T}$ which has $Y^\ast$ as the root, see Figure~\ref{fig:subtree} for $d=2$ and $Y^*=(1,2)$. We will call it $\cal{T}_{Y^\ast}$ and the set of its vertices is $\cal{V}_{Y^\ast}$. There are $d$ types of these subtrees depending on \( \tilde\ell_{Y^*}\), see \eqref{tildeell}.
\begin{figure}[h!]
\includegraphics[scale=.4]{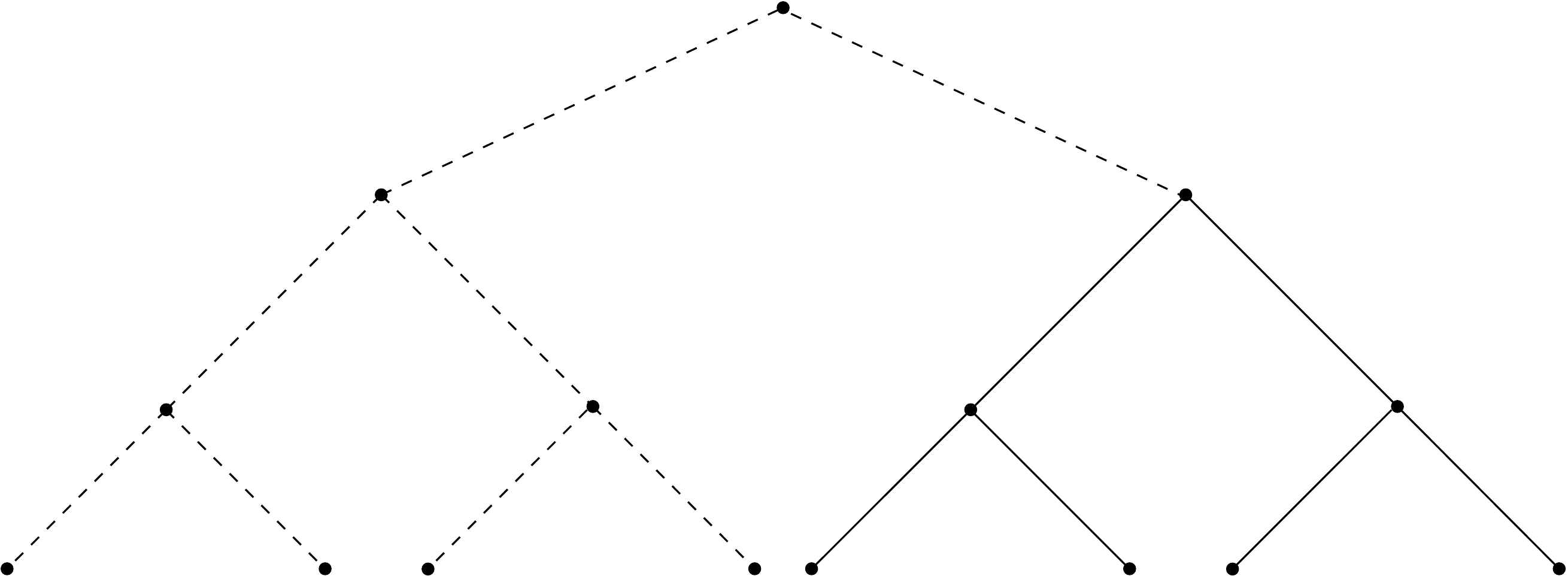}
\begin{picture}(0,0)
\put(-165,115){$(1,1)$}
\put(-75,75){$(1,2)\sim Y^*$}
\put(-256,75){$(2,1)$}
\put(-143,32){$(2,2)$}
\put(-32,32){$(1,3)$}
\put(-188,32){$(2,2)$}
\put(-298,32){$(3,1)$}
\put(-155,-7){$(3,2)$}
\put(-99,-7){$(2,3)$}
\put(-78,-7){$(2,3)$}
\put(-15,-7){$(1,4)$}
\put(-315,-7){$(4,1)$}
\put(-254,-7){$(3,2)$}
\put(-233,-7){$(3,2)$}
\put(-176,-7){$(2,3)$}
\end{picture}
\caption{Three generations of subtree \( \mathcal T_{Y^*} \) with root at \( Y^* \) for $d=2$ (solid lines).}
\label{fig:subtree}
\end{figure}
We can define the operator $\cal{R}_{Y^\ast}$ on $\cal{T}_{Y^\ast}$ in the same way as it was done in \eqref{dede} with \( O \) replaced by $Y^\ast$ and ${\vec{\kappa}}=\vec e_{\ell_{Y^*}}$. $\cal{R}_{Y^\ast}$ can be regarded as the restriction of $\cal{R}_{\vec{\kappa}}$ to all $f$ defined on $\cal{V}$ which are zero away from $\cal{V}_{Y^\ast}$. The operator $\cal{R}_{Y^\ast}$ can be symmetrized in the same way as it was done in \eqref{ooo} to produce a self-adjoint operator $\cal{J}_{ Y^\ast}$. By construction, this  $\cal{J}_{ Y^\ast}$ is also equal to restriction of $\cal{J}_{\vec{\kappa}}$ to $\cal{V}_{Y^\ast}$. The Stieltjes transform of the spectral measure of $e_{Y^\ast}$ with respect to the operator $\cal{J}_{Y^\ast}$ defined on $\ell^2(\cal{V}_{Y^\ast})$ is given by
\[
\Theta_{Y^\ast}(z):=\langle (\cal{J}_{Y^\ast}-z)^{-1}e_{Y^\ast},e_{Y^\ast}\rangle\,.
\]
Denote the restriction of $l_Y$ to $\cal{T}_{Y^\ast}$ by $l^{(Y^*)}_Y$. Identity \eqref{sd_f33}, when restricted to $\cal{T}_{Y^\ast}$, implies that
\[
(\cal{J}_{Y^\ast}-z)l^{(Y^\ast)}=-\widetilde W^{\frac 12}_{Y^\ast}    l_{Y^\ast_{(p)}}(z)e_{Y^\ast}\,.
\]
Therefore,
\[
l^{(Y^\ast)}_Y(z)=-\widetilde W^{\frac 12}_{Y^\ast}  l_{Y^\ast_{(p)}}(z)  G_{Y^\ast}(Y,Y^\ast,z),
\]
where $G_{Y^\ast}$ denotes the Green's function of $\cal{J}_{Y^\ast}$. In particular, we get from \eqref{muuu} that
\begin{equation}
\label{sd_f7}
\Theta_{Y^\ast}(z)=-\widetilde{W}^{-\frac 12}_{Y^\ast} \frac{l_{Y^\ast}(z)}{l_{Y^\ast_{(p)}}(z)}=-\frac{L_{\Pi(Y^*)}(z)}{L_{\Pi(Y^*_{(p)})}(z)}\,.
\end{equation}
In the spectral theory of  Schr\"{o}dinger operators on Cayley trees it is known  (see, e.g., \cite{den1}) that the functions $\Theta_{Y^\ast}(z)$ enter into the branching continued fraction for $\Theta_{\vec{\kappa}}$ defined in \eqref{sd_f6}. Let us recall this argument. For $Y^\ast\neq O$, we write equation for $l$ at point $Y^\ast$:
\[
\widetilde{V}_{Y^\ast}l_{Y^\ast} + \widetilde W^{\frac{1}{2}}_{Y^\ast}l_{Y^\ast_{(p)}} + \sum_{j=1}^d \widetilde W^{\frac{1}{2}}_{Y^\ast_{(ch),j}}l_{Y^\ast_{(ch),j}}
=zl_{Y^\ast}\,.
\]
Divide both sides by $l_{Y^\ast}$ and use \eqref{sd_f7} at points $Y^\ast$ and $\big\{Y^\ast_{(ch),i}\big\}$. This gives
\[
\widetilde V_{Y^\ast}-\frac{1}{\Theta_{Y^\ast}(z)}-\sum_{i=1}^d \widetilde W_{Y^\ast_{(ch),i}}\Theta_{Y^\ast_{(ch),i}}(z)
=z\,.
\]
Iterative application of this formula provides the branching continued fraction for $\Theta_{\vec{\kappa}}$. If $d=2$, proposition~\ref{sd_uni} implies that all the entries of this continued fraction can be found uniquely provided that three additional parameters are known.

\subsection{JMs on infinite trees for AS: multiplication operators}

One important aspect of the one-dimensional theory is that the system $\{p_n(x,\mu)\}$ can be used to show that the Jacobi matrix $\cal{J}$ is unitarily equivalent to the multiplication operator defined on $L^2(\mu)$. Indeed, orthogonality conditions give us
\begin{equation}
\label{sd_jj}
e_0(n) = \|\mu\|^{-1/2} \int p_n(x,\mu)d\mu(x), \quad n\in \mathbb{Z}_+,
\end{equation}
and acting on this identity by $\cal{J}^k, k\in \mathbb{Z}_+$, we get
\begin{equation}
\label{sd_ba}
\big(\cal{J}^k e_0\big)(n) = \|\mu\|^{-1/2} \int x^k p_n(x,\mu)d\mu(x), \quad n\in \mathbb{Z}_+\,,
\end{equation}
while $\{p_n\}$ is generalized orthogonal basis of eigenvectors of $\cal{J}$ in the Hilbert space $\ell^2(\mathbb{Z}_+)$. This formula sets the ground for the constructive proof of the Spectral Theorem for $\cal{J}$. In the multidimensional case, some generalizations are possible.
\begin{proposition} If $d=2$, then
\begin{equation}\label{sd_jkl}
e_O(Y) = m^{-1}_Y\int A_Y^{(1)}(x)xd\mu_1(x) + m^{-1}_Y\int A_Y^{(2)}(x)xd\mu_2(x)
\end{equation}
and
\begin{equation}\label{sd_lo1}
\big(\cal{J}^k_{\vec e_1}e_O\big)(Y)=m^{-1}_Y\int T_k(x) A_Y^{(1)}(x)xd\mu_1(x) + m^{-1}_Y\int T_k(x)A_Y^{(2)}(x)xd\mu_2(x)\,,
\end{equation}
where $T_k(x)=x^k+\cdots$ are monic polynomials that can be computed inductively by $T_0(x)=1$ and
\[
T_{k+1}(x)=xT_k(x)+(b_{(0,0),1}-b_{(0,0),2})A_{(1,1)}^{(2)}\int xT_k(x)d\mu_2(x)\,.
\]
Similarly, one can get a formula for $\cal{J}^k_{\vec e_2}e_O$.
\end{proposition}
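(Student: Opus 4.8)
The plan is to reduce both identities to the orthogonality relations \eqref{1.1} and the normalization \eqref{n_2} of the linear forms \( Q_\n \), and then to obtain \eqref{sd_lo1} by induction on \( k \). Throughout I would abbreviate, for a polynomial \( f \), \( \int f\,Q_Y:=\sum_{j=1}^d\int f(x)A_Y^{(j)}(x)d\mu_j(x) \), so that the right-hand sides of \eqref{sd_jkl} and \eqref{sd_lo1} are exactly \( m_Y^{-1}\int x\,Q_Y \) and \( m_Y^{-1}\int xT_k\,Q_Y \). Since \( \cal R_{\vec\kappa} \) (and hence its conjugate \( \cal J_{\vec e_1}=m^{-1}\cal R_{\vec e_1}m \), cf. \eqref{ooo}, \eqref{muuu}) is a nearest-neighbour operator on the tree assembled from scalar coefficients, and since \( Y\mapsto\int xT_k\,Q_Y \) is a well-defined function on \( \cal V \), the whole argument is purely algebraic and no summability considerations enter.

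First I would prove \eqref{sd_jkl}. On the infinite tree rooted at \( (1,1) \), every vertex \( Y \) has \( |\Pi(Y)|\ge2 \), with equality precisely for \( Y=O \) (the lattice point \( (1,1) \) is visited by each path only at its start, so \( \Pi^{-1}\{(1,1)\}=\{O\} \)). For \( Y\neq O \) we thus have \( 1\le|\Pi(Y)|-2 \), so \eqref{1.1} gives \( \int x\,Q_Y=0 \); for \( Y=O \), \eqref{n_2} gives \( \int x\,Q_O=\int x^{|\Pi(O)|-1}Q_O=1 \). Hence \( \int x\,Q_Y=e_O(Y) \) for every \( Y \), and since \( m_O=(\widetilde W_O)^{-1/2}=1 \) this is exactly \eqref{sd_jkl}; it is also the case \( k=0 \) of \eqref{sd_lo1} because \( T_0\equiv1 \).

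For \eqref{sd_lo1} I would set \( G^{(k)}_Y:=\int xT_k\,Q_Y \) and prove by induction on \( k \) that \( G^{(k)}_Y=m_Y\big(\cal J_{\vec e_1}^k e_O\big)(Y) \), the base case being the previous paragraph. For the inductive step, \( \big(\cal J_{\vec e_1}^{k+1}e_O\big)(Y)=m_Y^{-1}\big(\cal R_{\vec e_1}G^{(k)}\big)_Y \), and because \( \cal R_{\vec e_1} \) acts on the measure-valued function \( Q \) and on \( G^{(k)} \) through the same scalar coefficients, linearity of integration gives \( \big(\cal R_{\vec e_1}G^{(k)}\big)_Y=\int xT_k\,(\cal R_{\vec e_1}Q)_Y \). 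Now I would invoke \eqref{opRk} in the form \( (\cal R_{\vec e_1}Q)_Y=xQ_Y+\big(\sum_{i=1}^d\gamma_i\,d\mu_i\big)e_O(Y) \) and use \( x\cdot xT_k(x)=xT_{k+1}(x)-c_kx \) with \( c_k:=(b_{(0,0),1}-b_{(0,0),2})A_{(1,1)}^{(2)}\int xT_k\,d\mu_2 \), together with \( \int x\,Q_Y=e_O(Y) \), to arrive at
\[
\big(\cal R_{\vec e_1}G^{(k)}\big)_Y=G^{(k+1)}_Y+e_O(Y)\Big(\sum_{i=1}^d\gamma_i\int xT_k\,d\mu_i-c_k\Big).
\]
The induction then closes exactly when the parenthesis vanishes, i.e. when \( c_k=\sum_{i}\gamma_i\int xT_k\,d\mu_i \).

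Establishing this last identity is the only genuine computation and the step I expect to be the main obstacle. For \( \vec\kappa=\vec e_1 \), \eqref{my-gammas} gives \( \gamma_1=0 \), \( \gamma_2=-\|\mu_2\|^{-1} \), so the requirement reduces to \( (b_{(0,0),1}-b_{(0,0),2})A_{(1,1)}^{(2)}=-\|\mu_2\|^{-1} \). To verify it, I would use that \( P_{\vec e_j}(x)=x-b_{(0,0),j} \) is the monic first-degree orthogonal polynomial of \( \mu_j \), so \( b_{(0,0),j}=\|\mu_j\|^{-1}\int x\,d\mu_j \) and consequently \( b_{(0,0),1}-b_{(0,0),2}=-\Xi(\mu_1,\mu_2)^{-1} \); and I would solve the \( 2\times2 \) linear system for the constants \( A_{(1,1)}^{(1)},A_{(1,1)}^{(2)} \) coming from \( \int Q_{(1,1)}=0 \) (by \eqref{1.1}) and \( \int xQ_{(1,1)}=1 \) (by \eqref{n_2}), which gives \( A_{(1,1)}^{(2)}=\Xi(\mu_1,\mu_2)\|\mu_2\|^{-1} \). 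Their product is \( -\|\mu_2\|^{-1} \), as required, closing the induction and hence proving \eqref{sd_lo1}. Finally, the analogous formula for \( \cal J_{\vec e_2}^k e_O \) follows by repeating the argument with the roles of \( \mu_1 \) and \( \mu_2 \) interchanged (so \( \vec\kappa=\vec e_2 \), \( \gamma_2=0 \), \( \gamma_1=-\|\mu_1\|^{-1} \)), which also produces the corresponding recursion for the associated monic polynomials.
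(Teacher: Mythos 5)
Your proof is correct, but it follows a genuinely different route from the paper's. The paper never works with the combined form \( Q \) in the induction: it splits \eqref{sd_jkl} into its two components, observes that \( m_Y^{-1}A_Y^{(1)} \) and \( m_Y^{-1}A_Y^{(2)} \) are formal eigenfunctions of the \emph{two different} operators \( \cal J_{\vec e_1} \) and \( \cal J_{\vec e_2} \) (via the recurrence \eqref{1.5} written for each \( A^{(m)}_\n \) separately, together with \( A_{\vec e_1}^{(2)}=A_{\vec e_2}^{(1)}=0 \)), and then handles the mismatch by noting that \( \cal J_{\vec e_1}-\cal J_{\vec e_2}=(b_{(0,0),1}-b_{(0,0),2})\langle e_O,\cdot\rangle e_O \) is rank one, which is what makes the constant \( (b_{(0,0),1}-b_{(0,0),2})A_{(1,1)}^{(2)} \) appear in the recursion for \( T_{k+1} \) without any further computation. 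You instead keep \( Q \) intact, use the single operator identity \eqref{opRk} with \( \vec\kappa=\vec e_1 \), and reduce the inductive step to the moment identity \( c_k=\sum_i\gamma_i\int xT_k\,d\mu_i \); the price is that you must separately verify \( (b_{(0,0),1}-b_{(0,0),2})A_{(1,1)}^{(2)}=-\|\mu_2\|^{-1} \), which you do correctly from \( P_{\vec e_j}(x)=x-b_{(0,0),j} \) and the \( 2\times2 \) system for \( A_{(1,1)}^{(1)},A_{(1,1)}^{(2)} \) (consistent with \eqref{my-gammas}). What your route buys is uniformity: it only invokes machinery already established for \( \cal R_{\vec\kappa} \), and it would give, for arbitrary \( \vec\kappa \), the cleaner recursion \( T_{k+1}=xT_k+\sum_i\gamma_i\int xT_k\,d\mu_i \). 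What the paper's route buys is the structural insight that the two halves of \( Q \) are eigenfunctions of two rank-one-related Jacobi matrices, which is the point the surrounding text wants to emphasize. Both arguments are purely algebraic and both are complete; your base case \( \int xQ_Y=e_O(Y) \) from \eqref{1.1} and \eqref{n_2} is also the content of the paper's one-line justification of \eqref{sd_jkl}.
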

\begin{proof}
We notice first that \eqref{1.5} implies
\begin{equation}\label{sd_1.5}
xA^{(m)}_{\vec{n}}(x)=A^{(m)}_{\vec{n}-\vec{e}_j}(x)+b_{\vec{n}-\vec{e}_j,j}A^{(m)}_{\vec{n}}(x)+\sum\limits_{i=1}^2 a_{\vec{n},i}
A^{(m)}_{\vec{n}+\vec{e}_i}(x)\,,\quad m,j\in \{1,2\}, \quad \vec{n}\in \mathbb{N}^2\,.
\end{equation}
The formula \eqref{sd_jkl} in proposition follows from the definition of the polynomials of the first type. We notice here that the first integrand, i.e., $m^{-1}_YA_Y^{(1)}$, is a formal eigenfunction of $\cal{J}_{\vec e_1}$ and the second one is a formal eigenfunction of $\cal{J}_{\vec e_2}$ thanks to \eqref{sd_1.5} and  $A_{\vec{e}_1}^{(2)}=A_{\vec{e}_2}^{(1)}=0$. This can serve as a multidimensional analog of identity \eqref{sd_jj} with the striking difference that the formal eigenvectors of two operators are involved. Acting repeatedly on \eqref{sd_jkl} by $\cal{J}_{\vec e_1}$ gives a formula \eqref{sd_lo1} which is similar to \eqref{sd_ba}. Indeed, $T_0(x)=1$. Now, we argue by induction: given \eqref{sd_lo1}, we act on it by $ \cal{J}_{\vec e_1}$ to get
\begin{equation}\label{sd_lo2}
\big(\cal{J}^{k+1}_{\vec e_1}e_O\big)(Y)=m^{-1}_Y\int (xT_k(x)) A_Y^{(1)}(x)xd\mu_1(x) + m^{-1}_Y\int T_k(x)\Bigl(\cal{J}_{\vec e_1}A_Y^{(2)}(x)\Bigr)xd\mu_2(x).
\end{equation}
Next, we notice that \eqref{ooo} and \eqref{sd_i1} yield
\[
(\cal{J}_{\vec e_1}-\cal{J}_{\vec e_2})f=(b_{(0,1),1}-b_{(1,0),2})\langle e_O,f\rangle e_O=(b_{(0,0),1}-b_{(0,0),2})\langle e_O,f\rangle e_O, 
\]
i.e., $\cal{J}_{\vec e_1}$ and $\cal{J}_{\vec e_2}$ are rank-one perturbations of one another and
\[
\cal{J}_{\vec e_1}A^{(2)}(x)=\cal{J}_{\vec e_2}A^{(2)}(x) + (b_{(0,0),1}-b_{(0,0),2})A_O^{(2)} e_O=xA^{(2)}(x)+(b_{(0,0),1}-b_{(0,0),2})A_O^{(2)} e_O\,.
\]
Substituting this into the second term in \eqref{sd_lo2}, we get
\begin{multline*}
\big(\cal{J}^{k+1}_{\vec e_1}e_O\big)(Y)=m^{-1}_Y\int (xT_k(x)) A_Y^{(1)}(x)xd\mu_1(x) + m^{-1}_Y\int (xT_k(x))A_Y^{(2)}(x)xd\mu_2(x) \\ +\left( (b_{(0,0),1}-b_{(0,0),2})A_{(1,1)}^{(2)}\int T_k(x)xd\mu_2(x)  \right)e_O(Y)\,.
\end{multline*}
Now, using \eqref{sd_jkl} we get
\[
T_{k+1}(x)=xT_k(x)+(b_{(0,0),1}-b_{(0,0),2})A_{(1,1)}^{(2)}\int xT_k(x)d\mu_2(x)\,,
\]
which finishes thet proof.
\end{proof}

\begin{remark}
Since all $\{T_k\}$ are linearly independent, the  formula \eqref{sd_lo1} sets the linear isomorphism between  $Span \big\{\cal{J}^k_{\vec e_1}e_O\big\}_{k\in \mathbb{Z}_+}$ and linear space of algebraic polynomials in $x$.
\end{remark}

 The function $f_{Y}:=m^{-1}_{Y}A_{Y}^{(j)}$ formally satisfies an identity $\big(\cal{J}_{\vec e_j}f(x)\big)_Y=xf_Y(x)$ for all $x\in \mathbb{R}$ but, in general, we do not know in what sense it can be regarded as generalized eigenfunction of $\cal{J}_{\vec e_j}$.
 However, if $f(E)\in \ell^2(\cal{V})$ for some $E\in \mathbb{R}$, then $f(E)$ is an actual eigenvector of $\cal{J}_{\vec e_j}$ corresponding to eigenvalue $E$.  Condition $f(E)\in \ell^2(\cal{V})$ can be verified in some cases. For example, take $\vec\kappa=\vec e_2$ and assume that $E$ is an isolated atom in $\mu_2$. Let $\cal{L}$ be the closure of the subspace spanned by vectors $\{e_O,\cal{J}_{\vec e_2}e_O, \cal{J}^2_{\vec e_2}e_O, \ldots \}$. Clearly, $\cal{L}$ is invariant under $\cal{J}_{\vec e_2}$. Let the restriction of $\cal{J}_{\vec e_2}$ to $\cal{L}$ be denoted by
$\widehat{\cal{J}}_{\vec e_2}$. It is a basic fact of the spectral theory of self-adjoint operators, that $\widehat{\cal{J}}_{\vec e_2}$ is unitarily equivalent to a one-dimensional one-sided Jacobi matrix. We do not know if $E$ is an isolated eigenvalue of $\cal{J}_{\vec e_2}$. However,   from \eqref{kuk1} applied to $\cal{J}_{\vec e_2}$, we learn that $E$ is an isolated eigenvalue for $\widehat{\cal{J}}_{\vec e_2}$.
Consider a small contour $\Gamma$ around $E$ which separates it from the rest of the support of $\mu_2$. Since $E$ is an isolated eigenvalue for  $\widehat{\cal{J}}_{\vec e_2}$,  we get representation for  the spectral projection
\[
{\rm Proj}_Ee_O=-\frac{1}{2\pi\ic}\int_\Gamma G(Y,O,z)dz\in \ell^2(\cal{V})\,.
\]
On the other hand, it follows from \eqref{sd_f5} and \eqref{my-gammas} that
\begin{eqnarray*}
-\frac{1}{2\pi\ic}\int_\Gamma G(Y,O,z)dz &=& \frac1{2\pi\ic}\frac{\|\mu_1\|}{m_Y} \int_\Gamma \frac{\int_{\mathbb{R}} \frac{Q_Y(\xi)}{z-\xi}}{\widehat \mu_1(z)}dz = \frac1{2\pi\ic}\frac{\|\mu_1\|}{m_Y} \int_{\mathbb{R}}{\int_{\Gamma}\frac{Q_Y(\xi)dz}{(z-\xi)\widehat \mu_1(z)}} \\
&=& \big(\|\mu_1\|\widehat\mu_1^{-1}(E)\mu_2(\{E\})\big)m^{-1}_YA_Y^{(2)}(E)
\end{eqnarray*}
by residue calculus. Therefore, $m^{-1}_YA_Y^{(2)}\in \ell^2(\cal{V})$ and thus it represents a true eigenvector of~$\cal{J}_{\vec e_2}$.

\subsection{AS with analytic weights: asymptotics of the recurrence coefficients}
\label{ssec:rec}

In this subsection we  describe the asymptotic behavior of the recurrence coefficients \( \{a_{\n,j},b_{\n,j}\} \) from \eqref{1.5}, \eqref{1.7} 
when measures of orthogonality form an Angelesco system \eqref{1.10} with
\[
\supp\,\mu_j=\Delta_j:=[\alpha_j,\beta_j], \quad j\in\{1,\ldots,d\},
\]
 and have analytic non-vanishing densities with respect to the Lebesgue measure on the corresponding interval. The proof of the main theorem is presented in Appendix~\ref{ApB}.

 In what follows, we always assume that
\begin{equation}
\label{multi-indices}
n_i = c_i|\n| + o\big(\n\big), \quad i\in\{1,\ldots,d\}, \quad \vc=(c_1,\ldots,c_d)\in(0,1)^d, \quad |\:\vc\:|:=\sum_{i=1}^dc_i=1.
\end{equation}
When \( d=1 \), i.e., when we have only one interval of orthogonality, it holds that \( \n =n \) and therefore \( \vc = c_1 = 1 \). Even though the middle condition in \eqref{multi-indices} is not satisfied, all the considerations below still apply, however, no results are new in this case.

It is known that the weak asymptotic behavior of multiple orthogonal polynomials is described by the logarithmic potentials of components of a certain vector equilibrium measure \cite{GRakh81}. More precisely, given $\vec c$ as in \eqref{multi-indices}, define
\[
M_{\vec c}\big(\Delta_1,\ldots,\Delta_d\big):=\big\{\vec\nu=(\nu_1,\ldots,\nu_d):~\nu_i\in M_{c_i}(\Delta_i), ~i\in\{1,\ldots,d\}\big\},
\]
where $M_c(\Delta)$ is the collection of all positive Borel measures of mass $c$ supported on $\Delta$. Then it is known that there exists the unique vector of measures $\vec\omega_\vc\in M_\vc\big(\Delta_1,\ldots,\Delta_d\big)$ such that
\[
I[\:\vec\omega_\vc\:] = \min_{\nu\in M_\vc(\Delta_1,\ldots,\Delta_d)} I[\:\vec\nu\:], \qquad I[\:\vec\nu\:] := \sum_{i=1}^d\bigg(2I[\nu_i] + \sum_{k\neq i}I[\nu_i,\nu_k]\bigg),
\]
where $I[\nu_i]:=I[\nu_i,\nu_i]$ and $I[\nu_i,\nu_k]:=-\int\int\log|z-t|\mathrm{d}\nu_i(t)\mathrm{d}\nu_k(z)$. The measure $\omega_{\vc,i}$ might no longer be supported on the whole interval $\Delta_i$ (the so-called \emph{pushing effect}), but in general it holds that
\[
\Delta_{\vc,i} := \supp(\omega_{\vec c,i})  = [\alpha_{\vc,i},\beta_{\vc,i}] \subseteq [\alpha_i,\beta_i], \qquad i\in\{1,\ldots,d\}.
\]

Using intervals \( \Delta_{\vc,i} \) we can define a $(d+1)$-sheeted compact Riemann surface, say $\RS_\vc$, realized in the following way. Take $d+1$ copies of $\overline{\mathbb{C}}$. Cut one of them along the union $\bigcup_{i=1}^d\Delta_{\vc,i}$, which henceforth is denoted by $\RS_\vc^{(0)}$. Each of the remaining copies cut along exactly one interval $\Delta_{\vc,i}$, so that no two copies have the same cut, and denote it by $\RS_\vc^{(i)}$. To form $\RS_\vc$, take $\RS_\vc^{(i)}$ and glue the banks of the cut $\Delta_{\vc,i}$ crosswise to the banks of the corresponding cut on $\RS_\vc^{(0)}$. It can be easily verified that thus constructed Riemann surface has genus \( 0 \). Denote by $\pi_\vc$ the natural projection from $\RS_\vc$ to $\overline{\mathbb{C}}$ (each sheet is simply projected down on to the corresponding copy of the complex plane). We also shall employ the notation $z^{(i)}$ for a point on $\RS_\vc^{(i)}$ with $\pi_\vc(z^{(i)})=z$ and \( \z \) for any point on \( \RS_\vc \) with \( \pi_\vc(\z)=z \).

Since $\RS_\vc$ has genus zero, one can arbitrarily prescribe zero/pole multisets of rational functions on $\RS_\vc$ as long as the multisets have the same cardinality.  Hence, we define \( \chi_\vc(\z) \) to be the rational function on $\RS_\vc$ such that
\begin{equation}
\label{chi}
\chi_\vc\big(z^{(0)} \big) = z + \mathcal O\big(z^{-1}\big) \quad \text{as} \quad z\to\infty.
\end{equation}
This is in fact a conformal map of \( \RS_\vc \) onto the Riemann sphere (it is uniquely defined by \eqref{chi} as all the functions with a single fixed pole are different by an additive constant and therefore prescribing the second term in the Taylor series at \( \infty^{(0)} \) to be zero is equivalent to prescribing a zero). Further, let us define constants  $\big\{A_{\vc,i},B_{\vc,i}\big\}_{i=1}^d$ by
\begin{equation}
\label{ABs}
\chi_\vc\big(z^{(i)} \big) = B_{\vc,i} + A_{\vc,i}z^{-1} + \mathcal O\big(z^{-2}\big) \quad \text{as} \quad z\to\infty.
\end{equation}
Then the following theorem holds.

\begin{theorem}
\label{thm:recurrence}
Assume that the measure $\mu_i$ is absolutely continuous with respect to the Lebesgue measure on \( \Delta_i \) and that the density \( \dd\mu_i(x)/\dd x \)  extends to a holomorphic and non-vanishing function in some neighborhood of $\Delta_i$ for each \( i\in\{1,\ldots,d\} \). Further, let \( \mathcal N_\vc=\{\n\} \) be a sequence of multi-indices for which \eqref{multi-indices} holds. Then the recurrence coefficients $\big\{a_{\n,i},b_{\n,i}\big\}$ from \eqref{1.5}, \eqref{1.7} satisfy
\begin{equation}
\label{limit}
\lim_{\mathcal N_\vc}a_{\n,i} =A_{\vc,i} \quad \text{and} \quad \lim_{\mathcal N_\vc}b_{\n,i} =B_{\vc,i}, \quad i\in\{1,\ldots,d\}.
\end{equation}
\end{theorem}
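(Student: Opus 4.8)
The plan is to deduce \eqref{limit} from the full strong (Szeg\H{o}-type) asymptotics of the type~II polynomials $P_\n$ and their functions of the second kind, obtained by a Deift--Zhou steepest descent analysis of the Riemann--Hilbert problem characterizing multiple orthogonality, following the now-standard scheme (cf.\ \cite{Y16}). One encodes $P_\n$, the Cauchy transforms of $P_\n\,\dd\mu_j$, and the type~I data of the neighboring multi-indices into a $(d+1)\times(d+1)$ matrix-valued function $\boldsymbol Y$, holomorphic off $\bigcup_{i=1}^d\Delta_i$, with the piecewise-constant jump built from the densities $\dd\mu_i/\dd x$ and the usual normalization at $\infty$. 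The recurrence coefficients $a_{\n,i},b_{\n,i}$ in \eqref{1.7} are rational expressions in the first two Taylor coefficients of $\boldsymbol Y$ at $\infty$ (equivalently, they are read off from the elementary transformations linking $\boldsymbol Y_\n$ to $\boldsymbol Y_{\n\pm\vec e_j}$), exactly as the classical $a_n,b_n$ are recovered in the scalar case, so it suffices to control these coefficients asymptotically.

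For the steepest descent, use the components $\omega_{\vc,i}$ of the vector equilibrium measure \cite{GRakh81}, whose supports $\Delta_{\vc,i}\subseteq\Delta_i$ record the \emph{pushing effect}; form the $g$-functions $g_i(z):=\int\log(z-t)\,\dd\omega_{\vc,i}(t)$ and the associated ``$\lambda$-functions''. The variational conditions for $I[\vec\nu]$ guarantee that the first transformation $\boldsymbol Y\mapsto\boldsymbol T$ makes the jump on each $\Delta_{\vc,i}$ oscillatory and the identity outside $\bigcup_i\Delta_{\vc,i}$. Analyticity and non-vanishing of the weights near each $\Delta_i$ permit the lens opening $\boldsymbol T\mapsto\boldsymbol S$, after which all jumps are exponentially close to the identity away from $\bigcup_i\Delta_{\vc,i}$ and small circles around the endpoints. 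Since $\RS_\vc$ has genus~$0$, the global parametrix $\boldsymbol N$ solving the remaining model problem is explicit: it is assembled from the uniformizing rational function $\chi_\vc$ of \eqref{chi}, its branch values $\chi_\vc(z^{(i)})$, and a scalar Szeg\H{o}-type factor absorbing the weights. The local parametrices are the standard ones --- Airy at the ``pushed'' endpoints, where $\omega_{\vc,i}$ has square-root vanishing, and Bessel (with trivial index, the weight being analytic and non-vanishing) at the hard endpoints still coinciding with an endpoint of some $\Delta_i$, where the density has inverse-square-root behaviour --- and they match $\boldsymbol N$ on the circles with relative error $\mathcal O(1/|\n|)$. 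Hence $\boldsymbol R:=\boldsymbol S\big(\boldsymbol N\text{ patched with the local parametrices}\big)^{-1}$ solves a small-norm problem, so $\boldsymbol R=I+\mathcal O(1/|\n|)$ uniformly on $\overline{\mathbb C}$, together with the corresponding estimates for the coefficients in its expansion at $\infty$.

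It then remains to unwind $\boldsymbol Y$ from $\boldsymbol R$, $\boldsymbol N$, and the explicit $g/\lambda$ factors and to expand at $\infty$. The contribution of $\boldsymbol R$ is $\mathcal O(1/|\n|)$, the $g/\lambda$ factors contribute only what the normalization prescribes, and the contribution of $\boldsymbol N$ is governed precisely by \eqref{ABs}, with $B_{\vc,i}$ entering the first Taylor coefficient and $A_{\vc,i}$ the second. Matching this against the formulas for $a_{\n,i},b_{\n,i}$ yields $b_{\n,i}=B_{\vc,i}+o(1)$ and $a_{\n,i}=A_{\vc,i}+o(1)$ as $\n$ runs over $\mathcal N_\vc$, the $o(1)$ being $\mathcal O(1/|\n|)$ plus a term controlled by $\max_i\big|n_i/|\n|-c_i\big|$ (here one uses that the whole construction depends continuously on $\vc$), which is \eqref{limit}. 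As a check, for $d=1$ the surface is the two-sheeted genus-$0$ curve and the formulas reduce to the classical $b_n\to(\alpha_1+\beta_1)/2$ and $a_n\to(\beta_1-\alpha_1)^2/16$.

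The part I expect to be the main obstacle is the interplay of the global parametrix with the pushing effect. One must first establish the required regularity of the Angelesco vector equilibrium problem with analytic weights: that each $\omega_{\vc,i}$ has a density strictly positive in the interior of $\Delta_{\vc,i}$, vanishing like a square root at each soft (pushed) endpoint and blowing up like an inverse square root at each remaining hard endpoint, and that the variational inequality is strict off $\bigcup_i\Delta_{\vc,i}$ so that the opened lenses stay in the region where the weights are analytic --- this is what legitimizes the choice of local models and the lens deformation. One must also construct $\boldsymbol N$ on $\RS_\vc$ explicitly, verifying that the Szeg\H{o} dressing is single-valued and that $\boldsymbol N$ has at most quarter-power singularities at the endpoints so the matching with the Airy/Bessel parametrices goes through. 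A further subtlety to watch for is the exceptional set of proportions $\vc$ at which a soft endpoint collides with a hard wall and the support structure changes; there the local analysis degenerates and one needs either a dedicated local parametrix or a separate limiting argument. The remaining bookkeeping --- translating the $\infty$-expansion of $\boldsymbol N$ into the precise constants $A_{\vc,i},B_{\vc,i}$ of \eqref{ABs} --- is lengthy but routine.
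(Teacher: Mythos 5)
Your proposal is correct and follows essentially the same route as the paper: the paper also obtains \eqref{limit} from the Deift--Zhou steepest descent analysis of the $(d+1)\times(d+1)$ Riemann--Hilbert problem for $\boldsymbol Y_\n$, with the global parametrix built from $\chi_\vc$ on the genus-zero surface $\RS_\vc$ (the paper imports this analysis wholesale from \cite[Theorem~2.5]{Y16} rather than redoing it). The only cosmetic difference is in the final bookkeeping: instead of generic ``rational expressions in the Taylor coefficients of $\boldsymbol Y$ at $\infty$,'' the paper extracts $a_{\n,i}$ as the ratio $h_{\n,i}/h_{\n-\vec e_i,i}$ via \eqref{por1} and the strong asymptotics of $R_\n^{(i)}$, and $b_{\n,i}$ from the subleading coefficient of $zL_{\n+\vec e_i}-L_\n$ via \eqref{por} and the type~I asymptotics --- an equivalent computation.
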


\begin{remark}
Theorem~\ref{thm:recurrence} as well as all the forthcoming results on asymptotics of MOPs remains valid under more general assumption that \( \dd\mu_i(x)/\dd x \)  is equal to the product of a non-vanishing possibly complex-valued holomorphic function and a so-called \emph{Fisher-Hartwig weight}, see \cite{Y16}. In this case the possibility of normalization \eqref{n_2} and the fact that \( \deg(P_\n) = |\n| \) are no longer immediate, but can be proven to hold for all \( \n\in\mathcal N_\vc \) with \( |\n| \) large enough (in which case the recurrence coefficients $\big\{a_{\n,i},b_{\n,i}\big\}$ are well defined). However, we opted not to pursue this generalization as it is technical and not conceptual in nature.
\end{remark}

\begin{remark}
When \( d =1 \) and we denote the single interval of orthogonality by \( [\alpha,\beta] \), the corresponding conformal map \( \chi \) can be explicitly written as
\[
\chi\big(z^{(k)}\big) = \frac{z-(\alpha+\beta)/2 - (-1)^k\sqrt{(z-\alpha)(z-\beta)}}2,
\]
for \( k\in\{0,1\} \), and therefore \( A= (\beta-\alpha)^2/16 \), \( B = (\beta+\alpha)/2 \), as expected.
\end{remark}

Since \( \chi_\vc(\z) \) is a conformal map, all the numbers \( B_{\vc,i} \) are distinct. Hence, the following corollary is an immediate consequence of theorem~\ref{thm:recurrence} and \cite[Theorem~1.1]{MR3489551}.

\begin{corollary}
\label{cor:typeIIa}
Under the conditions of theorem~\ref{thm:recurrence}, let polynomials \( P_\n(x) \) satisfy \eqref{1.2}. Then it holds that
\[
\lim_{\mathcal N_\vc} P_\n(z)/P_{\n+\vec e_j}(z) = \left(\chi_\vc\big(z^{(0)} \big) - B_{\vc,j} \right)^{-1}
\]
uniformly on closed subsets of \( \overline{\mathbb{C}}\setminus \bigcup_{i=1}^d \Delta_{\vc,i} \) for every \( j\in\{1,\ldots,d\} \) .
\end{corollary}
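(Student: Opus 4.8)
The plan is to combine the operator-theoretic identity \eqref{ratio} with the convergence of the recurrence coefficients from Theorem~\ref{thm:recurrence} and a known ratio asymptotics result for type~II multiple orthogonal polynomials along the Angelesco diagonal. First I would recall that, by \eqref{ratio}, the ratio $P_\n(z)/P_{\n+\vec e_j}(z)$ equals $-M_{\vec N}^{(j)}(z)$, which by \eqref{cc1} satisfies the scalar relation
\[
z = -\frac{1}{M_\n^{(j)}(z)} + b_{\n,j} - \sum_{l=1}^d a_{\n,l} M_{\n-\vec e_l}^{(l)}(z).
\]
Under \eqref{multi-indices}, each shifted multi-index $\n-\vec e_l$ also belongs to a sequence satisfying \eqref{multi-indices} with the same limiting direction $\vc$, so Theorem~\ref{thm:recurrence} gives $a_{\n,l}\to A_{\vc,l}$, $b_{\n,j}\to B_{\vc,j}$, and the full family $\{M_{\n-\vec e_l}^{(l)}\}$ converges to the same set of limit functions, if it converges at all. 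The main point is therefore to identify the limit and to establish that the convergence actually takes place.

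For the identification step I would invoke \cite[Theorem~1.1]{MR3489551}, which provides ratio asymptotics for type~II MOPs of an Angelesco system in terms of a branch of the conformal map $\chi_\vc$ of the Riemann surface $\RS_\vc$ onto the sphere. Concretely, one expects the limit $F_j(z)$ of $P_\n(z)/P_{\n+\vec e_j}(z)$ to solve the algebraic equation obtained by passing to the limit in \eqref{cc1}, namely
\[
z = -\frac{1}{F_j(z)} + B_{\vc,j} - \sum_{l=1}^d A_{\vc,l}F_l(z),
\]
together with the companion relations $-1/F_j = -1/F_m + B_{\vc,j}-B_{\vc,m}$ coming from \eqref{cc2}. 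Writing $\zeta := -1/F_j(z) + B_{\vc,j}$, these relations say precisely that $\zeta$ is independent of $j$ and, using \eqref{chi}, \eqref{ABs}, that $z\mapsto\zeta$ is the inverse of $\chi_\vc$ restricted to the sheet $\RS_\vc^{(0)}$; that is, $\zeta = \chi_\vc(z^{(0)})$ and hence $F_j(z) = \big(\chi_\vc(z^{(0)})-B_{\vc,j}\big)^{-1}$. One must also check the normalization/branch of $\chi_\vc$ is the correct one: as $z\to\infty$ the polynomials have degrees $|\n|$ and $|\n|+1$, so $P_\n(z)/P_{\n+\vec e_j}(z)\sim 1/z$, matching $F_j(z)\sim 1/z$, which forces the sheet $\RS_\vc^{(0)}$ where $\chi_\vc(z^{(0)}) = z+\mathcal O(z^{-1})$.

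The convergence itself (and its uniformity on compact subsets of $\overline{\mathbb{C}}\setminus\bigcup_i\Delta_{\vc,i}$) is exactly the content of \cite[Theorem~1.1]{MR3489551}; since $\chi_\vc$ being a conformal map guarantees that the numbers $B_{\vc,i}$ are distinct, the hypotheses of that theorem are met once Theorem~\ref{thm:recurrence} supplies the limits of the recurrence coefficients. Thus the proof is essentially a citation: the statement is the combination of Theorem~\ref{thm:recurrence} with the cited ratio-asymptotics result, and I would present it that way. The only genuine obstacle I anticipate is a bookkeeping one—verifying that the normalization conventions and the labelling of the sheets of $\RS_\vc$ in \cite{MR3489551} agree with those fixed in \eqref{chi}--\eqref{ABs} here, so that the limit functions match up with $\big(\chi_\vc(z^{(0)})-B_{\vc,j}\big)^{-1}$ rather than with some relabelled variant; once that is reconciled, no further work is needed.
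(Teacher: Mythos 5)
Your proposal is correct and follows the paper's own route exactly: the paper likewise derives the corollary by observing that the conformality of \( \chi_\vc \) makes the \( B_{\vc,i} \) distinct and then citing Theorem~\ref{thm:recurrence} together with \cite[Theorem~1.1]{MR3489551}, with the identification of the limit via the limiting algebraic system \eqref{cccc1_1} appearing later as a remark. The only blemish is a sign bookkeeping slip in your heuristic identification (your \( F_j \) is used both for \( \lim P_\n/P_{\n+\vec e_j} \) and for \( \lim M_{\vec N}^{(j)} = -\lim P_\n/P_{\n+\vec e_j} \)), but your normalization check at infinity lands on the correct branch and formula.
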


\subsection{JMs on finite trees for AS: convergence}

Our main goal in this subsection is to illustrate how connection between theory of MOPs and Jacobi matrices can be used to obtain results about MOPs. For that purpose, we will focus on ratio asymptotics. 

\begin{proposition}
Let \( \vc\in(0,1)^d \) and \( \mathcal N_\vc = \big\{\vec N\big\} \) be as in \eqref{multi-indices} (replace \( \n \) with \( \vec N \)). Suppose that $\vec\mu$ forms an Angelesco system for which the recurrence coefficients satisfy
\begin{equation}
\label{sd_jo7}
\lim_{\mathcal N_\vc}a_{\n,i} =A_{\vc,i} \quad \text{and} \quad \lim_{\mathcal N_\vc}b_{\n,i} =B_{\vc,i}, \quad i\in\{1,\ldots,d\}\,.
\end{equation}
Then the following limits exist:
\[
M_{\vc}^{(j)}(z) := -\lim_{\mathcal N_\vc}   \frac{P_{\vec{N}}(z)}{P_{\vec{N}+\vec{e}_j}(z)}, \quad j\in\{1,\ldots, d\},
\]
and the convergence is uniform on closed subsets of \( \overline{\mathbb{C}}\setminus \bigcup_{i=1}^d \Delta_i \).
\end{proposition}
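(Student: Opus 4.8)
The plan is to read $M_{\vec N}^{(j)}$ off operator-theoretically from \eqref{ratio} and to pass to the limit in the finite Jacobi matrices $\cal J_{\vec e_j,\vec N}$ on the trees $\cal T_{\vec N}$ as $\mathcal N_\vc\ni\vec N\to\infty$. First, by \eqref{ratio}, $M_{\vec N}^{(j)}(z)=\langle(\cal J_{\vec e_j,\vec N}-z)^{-1}e_O,e_O\rangle=\Theta_{\rho_{\vec N}^{(j)}}(z)$, where $\rho_{\vec N}^{(j)}$ is the spectral measure of $e_O$ relative to $\cal J_{\vec e_j,\vec N}$, a probability measure. Since the poles of the rational function $-P_{\vec N}/P_{\vec N+\vec e_j}$ are zeros of $P_{\vec N+\vec e_j}$, which for an Angelesco system lie in $\bigcup_i\Delta_i$ (indeed in their interiors), we get $\supp\rho_{\vec N}^{(j)}\subset\bigcup_i\Delta_i$ and hence $|M_{\vec N}^{(j)}(z)|\le\bigl(\dist(z,\bigcup_i\Delta_i)\bigr)^{-1}$. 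Thus the family $\{M_{\vec N}^{(j)}\}_{\vec N\in\mathcal N_\vc}$ is uniformly bounded on compact subsets of the connected domain $\overline{\mathbb C}\setminus\bigcup_i\Delta_i$, so by Vitali's theorem it suffices to prove that $M_{\vec N}^{(j)}(z)$ converges for each fixed $z\in\mathbb C\setminus\mathbb R$.

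Next I would construct the limiting operator. Let $\cal T_\infty$ be the infinite rooted tree whose root $O$ has $d$ children and each of whose vertices has $d$ children, carrying the same index labels $\ell$ as in the construction of $\cal T_{\vec N}$. For any fixed $r$ and all $\vec N$ with $\min_iN_i$ large, the ball of radius $r$ about the root of $\cal T_{\vec N}$ is isomorphic, as a labelled rooted tree, to the radius-$r$ ball of $\cal T_\infty$, because every vertex within distance $r$ of $O$ projects to a multi-index $\vec n\le\vec N$ with $\min_i n_i\ge\min_iN_i-r>0$, hence has all $d$ children. On $\cal T_\infty$ define the bounded self-adjoint operator $\cal J_\vc^{(j)}$ by the recipe of \eqref{r7} with the recurrence data frozen at $a_{\cdot,i}\equiv A_{\vc,i}$, $b_{\cdot,i}\equiv B_{\vc,i}$ and root value $B_{\vc,j}$. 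Since the multi-indices $\vec N-\vec e_{i_1}-\cdots-\vec e_{i_k}$, $k\le r$, still obey \eqref{multi-indices} with the same $\vc$, \eqref{sd_jo7} forces the recurrence coefficients entering $\cal J_{\vec e_j,\vec N}$ within distance $r$ of the root to converge to those of $\cal J_\vc^{(j)}$; together with the uniform bound $\|\cal J_{\vec e_j,\vec N}\|\le R'$ coming from condition (C) of \eqref{1.9}, the standard localization argument for uniformly bounded self-adjoint operators gives convergence of $\cal J_{\vec e_j,\vec N}$ to $\cal J_\vc^{(j)}$ in the strong resolvent sense (after the above identification of balls), whence
\[
M_{\vec N}^{(j)}(z)\ \longrightarrow\ M_\vc^{(j)}(z):=\langle(\cal J_\vc^{(j)}-z)^{-1}e_O,e_O\rangle,\qquad z\in\mathbb C\setminus\mathbb R.
\]

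Since $\mathbb C\setminus\mathbb R$ has accumulation points in $\overline{\mathbb C}\setminus\bigcup_i\Delta_i$, the a priori bound of the first paragraph together with Vitali's theorem upgrades this to locally uniform convergence of $M_{\vec N}^{(j)}$ on $\overline{\mathbb C}\setminus\bigcup_i\Delta_i$ to the function $M_\vc^{(j)}$, which is then automatically analytic there; this is exactly the assertion. One may moreover check, using the self-similarity of $\cal T_\infty$, that the limits $M_\vc^{(1)},\dots,M_\vc^{(d)}$ satisfy the constant-coefficient form of the coupled relations \eqref{cc1}--\eqref{cc2}, which has a unique Nevanlinna solution; under the additional analyticity hypothesis on the weights this pins down $M_\vc^{(j)}(z)=-\bigl(\chi_\vc(z^{(0)})-B_{\vc,j}\bigr)^{-1}$, in agreement with Corollary~\ref{cor:typeIIa}.

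I expect the main obstacle to be the middle step: correctly setting up the limiting operator on $\cal T_\infty$ and making the strong resolvent convergence precise across the varying Hilbert spaces $\ell^2(\cal V_{\vec N})$ (via the radius-$r$ ball identifications together with the uniform operator norm bound), and, relatedly, making sure the convergence of the recurrence coefficients is genuinely available throughout a bounded neighbourhood of the root — which rests on the observation that all multi-indices at bounded $\ell^1$-distance from $\vec N$ inherit \eqref{multi-indices} with the same $\vc$. The a priori bound in the first step is comparatively soft, but it does use the Angelesco-specific localization of the zeros of $P_{\vec N+\vec e_j}$ away from the gaps between the $\Delta_i$.
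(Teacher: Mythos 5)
Your proposal is correct and follows essentially the same route as the paper: both identify $M_{\vec N}^{(j)}$ with the matrix element $\langle(\cal J_{\vec e_j,\vec N}-z)^{-1}e_O,e_O\rangle$, place all the truncated operators on a single infinite homogeneous tree (the paper via the zero-extension remark preceding \eqref{r3}, you via radius-$r$ ball identifications), prove strong (resolvent) convergence to the constant-coefficient operator \eqref{Jkc} using the uniform norm bound from condition (C) together with \eqref{sd_jo7}, and then upgrade from $\mathbb C\setminus\mathbb R$ to closed subsets of $\overline{\mathbb C}\setminus\bigcup_i\Delta_i$ by a normal-family argument based on the location of the zeros of $P_{\vec N+\vec e_j}$. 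Your derivation of the a priori bound from the spectral measure of $e_O$ being a probability measure supported in $\bigcup_i\Delta_i$ is a slightly more explicit version of the paper's appeal to the interlacing/localization of zeros, but it is the same underlying fact.
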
  

This result  slightly generalizes part of \cite[Theorem~1.1]{MR3489551}. It can be used to give alternative proof to corollary \ref{cor:typeIIa}.

\begin{proof}
Consider operators $\{\cal{J}_{\vec{\kappa}, \vec{N}}\}$ introduced in \eqref{r7} for \( \vec N\in\mathcal N_\vc \). Thanks to a remark given right before the formula \eqref{r3}, we can assume that all these operators are defined on the single infinite tree $\cal{T}$. From the results of Appendix~\ref{appA}, we also know that coefficients in these operators are uniformly bounded, which implies $ \sup_{\vec{N}\in\mathcal N_\vc} \| \cal{J}_{\vec{\kappa}, \vec{N}}   \|<\infty$.

On the infinite $d+1$ homogeneous tree $\cal{T}$ with root at $O$, define operator $ \cal{J}_{\vec{\kappa},\vec{c}} $ obtained by formally taking the limit in \eqref{r7} and using \eqref{sd_jo7}:
\begin{equation}
\label{Jkc}
\left\{
\begin{array}{ll}
(\cal{J}_{\vec{\kappa},\vec{c}} f)_Y := B_{\vc,i}   f_Y+(A_{\vc,i} )^{\frac{1}{2}}f_{Y_{(p)}}+\sum_{j=1}^d( A_{\vc,j}    )^{\frac{1}{2}}f_{Y_{(ch),j}}, & Y\neq O, \medskip \\
(  \cal{J}_{\vec{\kappa},\vec{c}}   f)_O := \sum_{i=1}^dB_{\vc,i}\kappa_i   f_O+\sum_{j=1}^d( A_{\vc,j}    )^{\frac{1}{2}}f_{O_{(ch),j}}, & Y=O,
\end{array}
\right.
\end{equation}
where $Y_{(p)}$ has $d$ children each corresponding to the index $i\in \{1,\ldots,d\}$.

First, we claim that $ \cal{J}_{\vec{\kappa},\vec{N}}   \to \cal{J}_{\vec{\kappa},\vec{c}}$ in the strong operator sense, i.e.,
\[
\big\|\big(  \cal{J}_{\vec{\kappa},\vec{N}}   - \cal{J}_{\vec{\kappa},\vec{c}} \big)f\big\|_{\ell^2(\cal{V})} \to 0
\]
for every fixed $f\in \ell^2(\cal{V})$. Indeed,  let $\chi_{|X|<\rho}$ be the characteristic function of the ball in $\cal{T}$ with center at $O$ and radius $\rho$. Given any $\epsilon>0$, there is $\rho_\epsilon$ such that
\[
\|f-f\chi_{|X|<\rho_\epsilon}\|_{\ell^2(\cal{V})} \le \epsilon\,.
\]
Since coefficients $\{a_{\vec{n},j}\}$ and $\{b_{\vec{n},j}\}$ are uniformly bounded, we have
\[
\big\|\big(    \cal{J}_{\vec{\kappa},\vec{N}}   - \cal{J}_{\vec{\kappa},\vec{c}}  \big)(f\cdot\chi_{|X|\ge \rho_\epsilon})\big\|_{\ell^2(\cal{V})} \le C\epsilon
\]
uniformly in $\vec{N}$. Having $\epsilon$ and $\rho_\epsilon$ fixed, we get
\[
\big\|\big(   \cal{J}_{\vec{\kappa},\vec{N}}   - \cal{J}_{\vec{\kappa},\vec{c}}   \big)(f\cdot\chi_{|X|< \rho_\epsilon})\big\|_{\ell^2(\cal{V})} \to 0
\]
by our assumptions \eqref{sd_jo7}. This proves our claim.

Next, the Second Resolvent Identity from perturbation theory of operators gives
\[
(\cal{J}_{\vec e_j,\vec{N}}-z)^{-1}e_O=(\cal{J}_{\vec e_j,\vec{c}}-z)^{-1}e_O-(\cal{J}_{\vec e_j,\vec{N}}-z)^{-1}(\cal{J}_{\vec e_j,\vec{N}}-\cal{J}_{\vec e_j,\vec{c}})(\cal{J}_{\vec e_j,\vec{c}}-z)^{-1}e_O
\]
for $z\in \mathbb{C}\backslash \mathbb{R}$.  Since $\|(\cal{J}_{\vec e_j,\vec{c}}-z)^{-1}\|\le |\Im z|^{-1}$ by the Spectral Theorem,  we can take $|\vec{N}|\to\infty$ and use the above claim to obtain
\begin{equation}
\label{sd_ll9}
\lim_{|\vec{N}|\to\infty,\vec N\in\cal N_\vc}(\cal{J}_{\vec e_j,\vec{N}}-z)^{-1}e_O=(\cal{J}_{\vec e_j,\vec{c}}-z)^{-1}e_O
\end{equation}
and this convergence is uniform in $z$ over compacts in $\mathbb{C}^+$ and $\mathbb{C}^-$. Now, recall the notations \eqref{m11} and \eqref{ratio} for the resolvent matrix element
$$
M_{\vec{N}}^{(j)}(z) := \big\langle(\cal{J}_{\vec e_j,\vec{N}}-z)^{-1}e_{O},e_{O}\big\rangle\,=\, -\frac{P_{\vec{N}}(z)}{P_{\vec{N}+\vec{e}_j}(z)}\,.
$$
Thus, from \eqref{sd_ll9}, we get the required ratio asymptotics and $M_{\vec{c}}^{(j)}=\langle (\cal{J}_{\vec e_j,\vec{c}}-z)^{-1}e_O,e_O\rangle$. To extend this convergence to closed subsets of \( \overline{\mathbb{C}}\setminus \bigcup_{i=1}^d \Delta_i \), we only need to notice that interlacing property of zeros implies that functions $M_{\vec{N}}^{(j)}$ are uniformly bounded and analytic on them. Thus, by normal family argument we can prove that $M_{\vec{c}}^{(j)}$ are analytic there and the uniform convergence extends to these closed sets as well.
\end{proof}

Two remarks are in order now. 

\begin{remark}
Under the conditions of theorem~\ref{thm:recurrence}, we use corollary \ref{cor:typeIIa} to get
\begin{equation}\label{AlgF}
-M_{\vec{c}}^{(j)}(z) \,=\, A_{\vc,j}^{-1}\Upsilon_{\vc,j}\big(z^{(0)}\big), \quad z\in  \mathbb{\overline{C}}\setminus \bigcup_{i=1}^d \Delta_{\vec{c},i},
 \end{equation}
where\footnote{compare with formula \eqref{sd_jk1} in Appendix~\ref{ApB}} \( \Upsilon_{\vc,i} := A_{\vc,i}/\big(\chi_\vc-B_{\vc,i}\big) \), \( i\in\{1,\ldots,d\} \). Taking the limit in formulas \eqref{ratio} and \eqref{cc1}, we obtain
\begin{eqnarray}\label{cccc1_1}
z=\frac{A_{\vec{c},j}}{\Upsilon_{\vc,j}\big(z^{(0)}\big)   }+B_{\vec{c},j}+\sum_{i=1}^d  \Upsilon_{\vc,i}\big(z^{(0)}\big), \quad j\in\{1,\ldots,d\}.
\end{eqnarray}
In other words, functions $\Upsilon_{\vc,j}\big(z^{(0)}\big)$ define a solution to a system of $d$ algebraic equations and each of them, when multiplied by \(-1\), is in Nevanlinna class in $\mathbb{C}^+$.
\end{remark}

\begin{remark}
We can repeat the argument given right after formula \eqref{sd_f7}  to show that the matrix element of the resolvent operator $M_{\vec{c}}^{(j)}=\langle (\cal{J}_{\vec e_j,\vc}-z)^{-1}e_{O},e_{O}\rangle$ satisfies  equation similar to \eqref{cccc1_1}. Fix \( j\in\{1,\ldots,d\} \). Denoting the Green's function
$$
u:=(\cal{J}_{\vec e_j,\vec{c}}-z)^{-1}e_{O},
$$
we have $(\cal{J}_{\vec e_j,\vec{c}}-z)u=e_{O}$, which can be rewritten using \eqref{Jkc} as
\begin{equation}
\label{my-aux1}
B_{\vec{c},j}{u}_O+\sum_{l=1}^d (A_{\vc,l})^{1/2}{u}_{O_{(ch),l}}  \,=\, z {u}_{O}+1\,.
\end{equation}
As \( M^{(j)}_{\vec{c}} = \langle u,e_O\rangle = u_O\), that is equivalent to
\begin{equation}
\label{star1}
B_{\vec{c},j}M^{(j)}_{\vec{c}}+\sum_{i=1}^d ( A_{\vc,i})^{1/2}{u}_{O_{(ch),i}}  \,=\, z M^{(j)}_{\vec{c}}+1\,.
\end{equation}
Let us write $O_i:=O_{(ch),i}$, \( i\in\{1,\ldots,d\} \). Then we get from \eqref{Jkc} that
$$
z\, u_{O_j}\,=\,(A_{\vc,j} )^{\frac{1}{2}} u_O+ B_{\vc,j} u_{O_j}\,+\,\sum_{i=1}^d
( A_{\vc,i}    )^{\frac{1}{2}}u_{(O_j)_{(ch),i}}\,,
$$
or equivalently
\begin{equation}
\label{my-aux2}
B_{\vc,j}\frac{ -u_{O_j}}{(A_{\vc,j} )^{\frac{1}{2}} u_O}\,+\,\sum_{i=1}^d ( A_{\vc,i}    )^{\frac{1}{2}}\frac{-u_{(O_j)_{(ch),i}}}{(A_{\vc,j} )^{\frac{1}{2}} u_O} = 1 + z\, \frac{-u_{O_j}}{(A_{\vc,j} )^{\frac{1}{2}} u_O}.
\end{equation}
Let us denote by $(\cal{J}_{\vec e_j,\vc})_i$ the truncation of the operator $\cal{J}_{\vec e_j,\vc}$ to the subtree $\cal{T}^{(i)}$ with root at $O_i$. Further, let \( u^{(i)} \) be the Green's function for \( (\cal{J}_{\vec e_j,\vc})_i \), \( i\in\{1,\ldots,d\} \). By comparing \eqref{my-aux1} and \eqref{my-aux2}, we immediately see that 
$$
u^{(j)}_{O_j} = -\, \frac{u_{O_j}}{(A_{\vc,j} )^{\frac{1}{2}} u_O}.
$$
Identifying \( \ell^2\big(\cal V^{(i)}\big) \) with \( \ell^2(\cal V) \) in a standard way, we see that the operators \( (\cal{J}_{\vec e_j,\vc})_j \) and \( \cal{J}_{\vec e_j,\vc} \) are identical and therefore \( u^{(j)}_{O_j}=u_O=M_\vc^{(j)} \). Hence, it holds that
$$
u_{O_j}\,=\,-\,(A_{\vc,j} )^{\frac{1}{2}}\,\big( M_{\vec{c}}^{(j)} \big)^2.
$$
Substituting this result into \eqref{star1} we obtain
\begin{equation}\label{star2}
B_{\vec{c},j}M^{(j)}_{\vec{c}}-A_{\vc,j} \,\big( M_{\vec{c}}^{(j)} \big)^2 + \sum_{i=1, i\neq j}^d ( A_{\vc,i})^{1/2}{u}_{O_{(ch),i}} \,=\, z M^{(j)}_{\vec{c}}+1\,.
\end{equation}
An analogous argument on subtree $\cal{T}^{(i)}$,  $i \neq j$, yields that
$$
u_{O_i}\,=\,-\,(A_{\vc,i} )^{\frac{1}{2}}\, M_{\vec{c}}^{(i)}M_{\vec{c}}^{(j)}.
$$
Substituting this result into \eqref{star2}, we arrive at
\begin{equation*}
z=-\frac{1}{M_{\vec{c}}^{(j)}}+B_{\vec{c},j}-\sum_{i=1}^d{A_{\vec{c},i}}{M_{\vec{c}}^{(i)}},
\end{equation*}
which is consistent with \eqref{cccc1_1}.
\end{remark}

\subsection{AS with analytic weights: asymptotics of MOPs}

This subsection is the continuation of Section~\ref{ssec:rec}. In what follows, we shall set \( F^{(k)}(z) := F(z^{(k)}) \) for a function \( F \) on a given Riemann surface. It will also be convenient for us to set
\begin{equation}
\label{weights}
\frac{\dd\mu_i(x)}{\dd x} = -\frac{\rho_i(x)}{2\pi\ic},
\end{equation}
where,  as before, we assume that \( \rho_i(x) \) extends to a holomorphic and non-vanishing function in some neighborhood of \( \Delta_i \). Put
\[
w_{\vc,i}(z):=\sqrt{(z-\alpha_{\vc,i})(z-\beta_{\vc,i})}
\]
to be the branch holomorphic outside of $\Delta_{\vc,i}$ normalized so that $w_{\vc,i}(z)/z\to1$ as $z\to\infty$. Observe that
\[
(\rho_iw_{\vc,i+})(x) = 2\pi|w_{\vc,i}(x)|(d\mu_i(x)/d x)>0, \quad x\in \Delta_{\vc,i}^\circ:=(\alpha_{\vc,i},\beta_{\vc,i}),
\]
where \( w_{\vc,i+}(x) \) stands for the non-tangential limit of \( w_{\vc,i}(z) \) on \( \Delta_{\vc,i} \) taken from the upper half-plane. The following facts have been established in  \cite[Proposition~2.4]{Y16}.

\begin{proposition}
 There exists the unique up to a multiplication by a \( (d+1) \)-st root of unity set of functions $S_\vc^{(k)}(z)$, \( k\in\{0,\ldots,d\} \), such that
\begin{itemize}
\item \( S_\vc^{(0)}(z) \) is non-vanishing and holomorphic in $\overline{\mathbb{C}}\setminus\bigcup_{i=1}^d \Delta_{\vc,i}$ and \( S_\vc^{(i)}(z) \) is non-vanishing and holomorphic in $\overline{\mathbb{C}}\setminus \Delta_{\vc,i}$, \( i\in\{1,\ldots,d\} \);
\item $S_\vc^{(0)},S_\vc^{(i)}$ have continuous traces on \( \Delta_{\vc,i}^\circ \) that satisfy \( S_{\vc\:\pm}^{(i)}(x) = S_{\vc\:\mp}^{(0)}(x) \big(\rho_iw_{\vc,i+}\big)(x) \) there;
\item it holds that \( |S_\vc^{(0)}(z)| \sim |S_\vc^{(i)}(z)|^{-1} \sim |z-z_0|^{-1/4} \) as \( z\to z_0\in\{\alpha_{\vc,i},\beta_{\vc,i}\} \), $i\in\{1,\ldots,d\}$\footnote{\( A(z)\sim B(z) \) as \( z\to z_0 \) means that the ratio \( A(z)/B(z) \) is uniformly bounded away from zero and infinity as \( z\to z_0\).} and \( \prod_{k=0}^dS_\vc^{(k)}(z)\equiv1 \), \( z\in\mathbb{C} \).
\end{itemize}
These functions are continuous with respect to the parameter \( \vc \), i.e., \( S_\vc^{(i)}(z) \to S_{\vc_0}^{(i)}(z) \) for each \( z\in\overline{\mathbb C}\setminus\{\alpha_{\vc_0,i},\beta_{\vc_0,i}\} \) (including the traces on \( \Delta_{\vc_0}^\circ \)) as \( \vc\to\vc_0\in(0,1)^d \) for each \( i\in\{1,\ldots,d\} \).
\end{proposition}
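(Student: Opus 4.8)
The plan is to read the three bulleted properties as the data of a scalar, homogeneous Riemann--Hilbert problem on the genus-zero surface $\RS_\vc$ and to solve it explicitly by a Cauchy-transform construction, exactly as in the classical Szeg\H{o}-function situation (which is the case $d=1$). Gluing the sought functions into one function $\mathcal S_\vc$ on $\RS_\vc$, equal to $S_\vc^{(k)}$ on $\RS_\vc^{(k)}$, one looks for $\mathcal S_\vc$ holomorphic and non-vanishing off $\bigcup_{i=1}^d\Delta_{\vc,i}$, with boundary values across $\Delta_{\vc,i}$ tied by the multiplicative relation $S_{\vc\pm}^{(i)}=S_{\vc\mp}^{(0)}\big(\rho_iw_{\vc,i+}\big)$, with at worst $|z-z_0|^{\pm1/4}$ growth at the $2d$ ramification points $z_0\in\{\alpha_{\vc,i},\beta_{\vc,i}\}$, and normalized by $\prod_{k=0}^dS_\vc^{(k)}\equiv1$. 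Since $\RS_\vc$ has genus $0$, the conformal map $\chi_\vc$ from Section~\ref{ssec:rec} transports this to a scalar Riemann--Hilbert problem on $\overline{\mathbb C}$ in which the cuts $\Delta_{\vc,i}$ become $d$ mutually disjoint analytic arcs and the ramification points become $2d$ distinct finite endpoints; it is precisely genus $0$ that makes this planar scalar problem solvable by an explicit Cauchy integral, with no Abelian-differential corrections.

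The first step of the construction is to split off an explicit algebraic prefactor -- roughly, a product of appropriate fourth roots of the polynomials $(z-\alpha_{\vc,i})(z-\beta_{\vc,i})$, with exponent $-1/4$ on $\RS_\vc^{(0)}$ at every cut and $+1/4$ on $\RS_\vc^{(i)}$ at its own cut -- chosen so that it reproduces sheet by sheet the required $|z-z_0|^{\pm1/4}$ growth at all ramification points and the ``square-root part'' of the jump. The residual factor then solves a regular scalar problem whose jump is H\"older continuous and bounded away from $0$ and $\infty$ up to the endpoints; taking logarithms converts it into an additive problem solved by the Sokhotski--Plemelj formula, i.e.\ by a Cauchy transform over $\bigcup_{i=1}^d\Delta_{\vc,i}$ of the logarithm of the residual jump. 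Exponentiating and restoring the prefactor produces functions $\big\{S_\vc^{(k)}\big\}$ with all the required local properties. Finally, since $S_{\vc+}^{(i)}/S_{\vc-}^{(i)}=S_{\vc-}^{(0)}/S_{\vc+}^{(0)}$, the product $\prod_{k=0}^dS_\vc^{(k)}$ has trivial jump across every $\Delta_{\vc,i}$ and bounded, non-vanishing behavior at all endpoints, hence is a non-zero constant, so rescaling $\mathcal S_\vc$ by a single $(d+1)$-st root of that constant enforces $\prod_{k=0}^dS_\vc^{(k)}\equiv1$.

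Uniqueness is the easy part. If $\big\{S_\vc^{(k)}\big\}$ and $\big\{\widetilde S_\vc^{(k)}\big\}$ both solve the problem and $\mathcal S_\vc,\widetilde{\mathcal S}_\vc$ are the corresponding functions on $\RS_\vc$, then $T:=\mathcal S_\vc/\widetilde{\mathcal S}_\vc$ has no jump across any $\Delta_{\vc,i}$ (the multiplicative relations cancel), and at every ramification point the $|z-z_0|^{\pm1/4}$ factors cancel between numerator and denominator, so $T$ is holomorphic and non-vanishing there as well. Hence $T$ extends to a holomorphic non-vanishing function on the compact surface $\RS_\vc$, i.e.\ $T\equiv c$ for some $c\neq0$, and $\prod_kS_\vc^{(k)}=\prod_k\widetilde S_\vc^{(k)}=1$ forces $c^{d+1}=1$, which is exactly the asserted ambiguity.

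For the continuity in $\vc$, one uses that the endpoints $\alpha_{\vc,i},\beta_{\vc,i}$ -- and hence the bands $\Delta_{\vc,i}$, the functions $w_{\vc,i}$, and the conformal map $\chi_\vc$ -- depend continuously on $\vc\in(0,1)^d$, which is standard for such vector equilibrium problems. The algebraic prefactor depends continuously on these data, and the Cauchy transform defining the residual factor depends continuously on its contour and on $\vc$ as long as one stays away from the limiting endpoints; assembling these gives $S_\vc^{(i)}(z)\to S_{\vc_0}^{(i)}(z)$ for every $z\in\overline{\mathbb C}\setminus\{\alpha_{\vc_0,i},\beta_{\vc_0,i}\}$, including the traces on $\Delta_{\vc_0,i}^\circ$. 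The main obstacle -- essentially the only place where genuine work is needed -- is the endpoint analysis: verifying that the constructed function has \emph{exactly} the $|z-z_0|^{\pm1/4}$ behavior at each ramification point and nothing worse (a local study of the Cauchy transform of the logarithm of a jump that degenerates like a square root there), together with uniform control of these Cauchy transforms as the bands $\Delta_{\vc,i}$ deform, in particular when an endpoint $\alpha_{\vc,i}$ or $\beta_{\vc,i}$ migrates toward the corresponding endpoint of $\Delta_i$ under the \emph{pushing effect}.
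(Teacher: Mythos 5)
The paper does not actually prove this proposition: it is quoted verbatim from \cite[Proposition~2.4]{Y16}, so there is no in-text argument to compare against. Your construction --- gluing the \( S_\vc^{(k)} \) into a single function on the genus-zero surface \( \RS_\vc \), transplanting the multiplicative boundary conditions to a scalar Riemann--Hilbert problem on \( \overline{\mathbb C} \) via \( \chi_\vc \), solving it by exponentiating a Cauchy transform of the logarithm of the jump, and getting uniqueness from the fact that the ratio of two solutions is holomorphic and non-vanishing on a compact surface while the normalization \( \prod_k S_\vc^{(k)}\equiv 1 \) pins the constant down to a \( (d+1) \)-st root of unity --- is exactly the route taken in the cited source, and the uniqueness and normalization arguments you give are complete and correct. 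One useful remark: positivity of \( (\rho_i w_{\vc,i+}) \) on \( \Delta_{\vc,i}^\circ \) is what guarantees the jump has index zero on each closed cycle, so the logarithm is single-valued and the Cauchy-transform solution exists; this is worth saying explicitly.

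The only soft spot is the algebraic prefactor. The function \( \big((z-\alpha_{\vc,i})(z-\beta_{\vc,i})\big)^{1/4} \), cut along \( \Delta_{\vc,i} \), changes sign along a loop encircling the cut, so the prefactor as you literally describe it is not single-valued on the sheet; you would need something like \( \big((z-\beta_{\vc,i})/(z-\alpha_{\vc,i})\big)^{1/4} \) corrected by a rational factor in \( \chi_\vc \), or a square root of a function on \( \RS_\vc \) with even divisor. In fact the prefactor is dispensable: since the jump vanishes to first order in the local surface coordinate \( \zeta=\sqrt{z-z_0} \) at each ramification point, a direct local analysis of the Cauchy transform of its logarithm already produces boundary behavior \( \exp\{\mp\tfrac12\log|\zeta|\}=|z-z_0|^{\mp 1/4} \) on the two sides of the cycle, which is precisely the third bullet. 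With that adjustment (or with the endpoint analysis you yourself flag as the real work carried out directly on the unmodified Cauchy transform), the argument goes through.
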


\begin{remark}
In the single interval \( [\alpha,\beta] \), i.e., in the case \( d=1 \), let
\[
S_\rho(z) := \exp\left\{\frac{w(z)}{2\pi\ic}\int_\alpha^\beta \frac{\log(\rho w_+)(x)}{z-x}\frac{dx}{w_+(x)}\right\},
\]
where \( w(z):=\sqrt{(z-\alpha)(z-\beta)} \), be the classical Szeg\H{o} function for a log-integrable positive weight \( \rho \). Then it is easy to check that \( S^{(0)} = S_\rho \) and \( S^{(1)} = 1/S_\rho \).
\end{remark}

Hereafter, we use for simplicity subindex \( \n \) instead of the normalized subindex \( \n/|\n|\in(0,1)^d \). For example, we shall write \( \RS_\n \) instead of \( \RS_{\n/|\n|} \). Let \( \Phi_\n(\z) \) be a rational function on \( \RS_\n \) with zero/pole divisor given by
\[
n_1\infty^{(1)} + \cdots + n_p \infty^{(p)} - |\n|\infty^{(0)}
\]
normalized so that \( \prod_{k=0}^d\Phi_n^{(k)}(z)\equiv 1 \) (such a normalization is possible since this product is necessarily a bounded entire function and therefore is a constant, and it is unique up to a multiplication by a \( (d+1) \)-st root of unity). It can be shown \cite[Proposition~2.1]{Y16} that
\[
\frac1{|\n|}\log\big|\Phi_\n(\z)\big| =
\left\{
\begin{array}{ll}
-V^{\omega_\n}(z) + \frac1{d+1}\sum_{k=1}^d\ell_{\n,k}, & \z\in\RS_\n^{(0)}, \medskip \\
V^{\omega_{\n,i}}(z) - \ell_{\n,i} + \frac1{d+1}\sum_{k=1}^d\ell_{\n,k}, & \z\in\RS_\n^{(i)}, \quad i\in\{1,\ldots,d\},
\end{array}
\right.
\]
for certain constants \( \ell_{\n,i} \), \( i\in\{1,\ldots,d\} \), where \( \omega_\n:=\sum_{i=1}^d\omega_{\n,i} \) and \( V^\omega(z):=-\int\log|z-t|\dd\omega(t) \) is the logarithmic potential of \( \omega \). It is of course true that \( \omega_{\n,i} \cws \omega_{\vc,i}\) and \( \ell_{\n,i}\to\ell_{\vc,i} \) as \( |\n|\to\infty \), \( \n\in\mathcal N_\vc \), for each \( i\in\{1,\ldots,d\} \), where \( \cws \) denotes weak-$(\ast)$ convergence of measures.

\begin{theorem}
\label{thm:typeII}
Let the measures \( \mu_i \), \( i\in\{1,\ldots,d\} \), be as in \eqref{weights} and polynomials \( P_\n(x) \) satisfy \eqref{1.2}. Further, let \( \mathcal N_\vc=\{\n\} \) be a sequence for which \eqref{multi-indices} holds. Then it holds for \( \n\in\mathcal N_\vc \) that
\[
\left\{
\begin{array}{lll}
P_\n(z) &=& \left(1 + \mathcal O\big(|\n|^{-1}\big)\right) \gamma_\n\big(S_\n\Phi_\n\big)^{(0)}(z), \medskip \\
P_\n(x) &=& \left(1 + \mathcal O\big(|\n|^{-1}\big)\right) \gamma_\n\big(S_\n\Phi_\n\big)_+^{(0)}(x) + \left(1 + \mathcal O\big(|\n|^{-1}\big)\right) \gamma_\n\big(S_\n\Phi_\n\big)_-^{(0)}(x),
\end{array}
\right.
\]
where the first relation holds uniformly on closed subsets of \( \overline{\mathbb{C}} \setminus \bigcup_{i=1}^d \Delta_{\n,i} \), the second one holds uniformly on compact subsets \( \bigcup_{i=1}^d \Delta_{\n,i}^\circ \), and \( \gamma_\n \) is a constant such that \(\displaystyle \lim_{z\to\infty}\gamma_\n z^{|\n|}\big(S_\n\Phi_\n\big)^{(0)}(z)=1 \).
\end{theorem}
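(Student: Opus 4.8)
\emph{Approach.} The plan is to establish the theorem by a Deift--Zhou nonlinear steepest descent analysis of the \( (d+1)\times(d+1) \) matrix Riemann--Hilbert problem \rhy\ that encodes the type~II multiple orthogonal polynomials of an Angelesco system, carried out uniformly along the sequence \( \mathcal N_\vc \); this is the content of Appendix~\ref{ApB} and parallels \cite{Y16}, the one genuinely new point being the uniformity in \( \n \). Since \( \vec\mu \) is perfect by Theorem~\ref{me1}, for every \( \n \) there is a unique matrix \( \boldsymbol Y \), holomorphic off \( \bigcup_{i=1}^d\Delta_i \), whose \( (1,1) \) entry is \( P_\n \) and whose remaining first-row entries are constant multiples of the functions of the second kind \( R_\n^{(i)} \), whose jump across \( \Delta_i \) is the identity except for \( \rho_i \) (see \eqref{weights}) in position \( (1,i{+}1) \), and which is normalized by \( \boldsymbol Y(z)\operatorname{diag}\big(z^{-|\n|},z^{n_1},\dots,z^{n_d}\big)\to I \) as \( z\to\infty \). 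Thus \( P_\n=Y_{11} \).

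\emph{Transformations.} First I would normalize at infinity: set \( \boldsymbol T(z):=\boldsymbol Y(z)\,\operatorname{diag}\big((\Phi_\n^{(0)}(z))^{-1},\dots,(\Phi_\n^{(d)}(z))^{-1}\big) \), which, after an overall normalizing constant, tends to \( I \) at \( \infty \). Because \( \Phi_\n \) is built from the equilibrium measures \( \vec\omega_\n \) of \cite{GRakh81} (so that \( \big|\Phi_\n^{(0)}/\Phi_\n^{(i)}\big|\equiv1 \) on \( \Delta_{\n,i} \) and is strictly larger, resp.\ smaller, in the appropriate half-neighborhoods off it), the jumps of \( \boldsymbol T \) become oscillatory off-diagonal on the bands \( \Delta_{\n,i} \) and exponentially (in \( |\n| \)) close to \( I \) elsewhere on \( \bigcup_i\Delta_i \). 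Next I would factor the oscillatory jump on each band and deform into a lens around \( \Delta_{\n,i} \); since \( \rho_i \) is holomorphic and non-vanishing in a fixed neighborhood of \( \Delta_i\supseteq\Delta_{\n,i} \) and the endpoints \( \alpha_{\n,i},\beta_{\n,i} \) depend continuously on \( \vc \), the lenses can be chosen inside the domain of analyticity uniformly in \( \n\in\mathcal N_\vc \). The resulting \( \boldsymbol S \) has jump \( I+\mathcal O\big(e^{-c|\n|}\big) \) on the lens boundaries and off the bands, and a constant jump on \( \bigcup_i\Delta_{\n,i} \).

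\emph{Parametrices and unraveling.} I would then build a global parametrix \( \boldsymbol N \), solving the model problem with those constant jumps, out of the Szeg\H{o} functions \( S_\n^{(k)} \) from the proposition preceding the theorem together with \( \chi_\n \) and \( w_{\n,i} \); its \( (1,1) \) entry is \( S_\n^{(0)} \) up to the normalization constant, and continuity of \( S_\vc^{(k)} \) in \( \vc \) is precisely what makes the ensuing estimates uniform in \( \n \). Near each soft edge \( \alpha_{\n,i},\beta_{\n,i} \) I would insert the classical Airy parametrix on a disk of fixed radius matching \( \boldsymbol N \) on its boundary to \( \mathcal O(1/|\n|) \). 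The ratio \( \boldsymbol R:=\boldsymbol S\,\boldsymbol P^{-1} \), with \( \boldsymbol P=\boldsymbol N \) outside the disks and the Airy parametrices inside, then has no jump inside the disks, jump \( I+\mathcal O(1/|\n|) \) on the disk boundaries and \( I+\mathcal O\big(e^{-c|\n|}\big) \) elsewhere, all uniformly in \( \n\in\mathcal N_\vc \); hence \( \boldsymbol R=I+\mathcal O(1/|\n|) \) uniformly on \( \overline{\mathbb C} \) by the small-norm theory. Tracing \( \boldsymbol R\mapsto\boldsymbol S\mapsto\boldsymbol T\mapsto\boldsymbol Y \) and reading off \( Y_{11}=P_\n \) yields \( P_\n=(1+\mathcal O(1/|\n|))\,\gamma_\n\big(S_\n\Phi_\n\big)^{(0)} \) on closed subsets of \( \overline{\mathbb C}\setminus\bigcup_i\Delta_{\n,i} \); inside a band one collects the parametrix values from both sides of the lens, giving the stated sum of the two boundary values of \( \big(S_\n\Phi_\n\big)^{(0)} \), each carrying a factor \( 1+\mathcal O(1/|\n|) \). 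The constant \( \gamma_\n \) is forced by \( P_\n \) being monic of degree \( |\n| \), consistent with \( \big(S_\n\Phi_\n\big)^{(0)}(z)\asymp z^{|\n|} \) at infinity.

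\emph{Main obstacle.} The substantive difficulty is not any individual transformation --- all are standard --- but propagating every error bound uniformly along \( \mathcal N_\vc \). This reduces to showing that the \( 2d \) soft edges \( \{\alpha_{\n,i},\beta_{\n,i}\} \) stay uniformly separated from one another and remain inside the fixed neighborhoods where the densities \( \rho_i \) are analytic and non-vanishing. That in turn follows from continuity of the vector equilibrium measure \( \vec\omega_\vc \) and of the constants in \eqref{ABs} with respect to \( \vc \) on a compact neighborhood of the limiting direction, together with \( \vc\in(0,1)^d \), which keeps the configuration non-degenerate; combined with \( \omega_{\n,i}\cws\omega_{\vc,i} \), \( \ell_{\n,i}\to\ell_{\vc,i} \), and \( S_\n^{(k)}\to S_{\vc}^{(k)} \), this delivers the uniform \( \mathcal O(1/|\n|) \). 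For a single fixed \( \vc \) one may alternatively just take \( |\n| \) large enough, which already suffices for the stated conclusion.
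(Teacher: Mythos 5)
Your proposal follows essentially the same route as the paper, which invokes the Riemann--Hilbert/steepest-descent analysis of \cite[Theorem~2.5]{Y16} and then reads both asymptotic formulas off the \((1,1)\) entry of the factorization \( \boldsymbol Y_\n = \boldsymbol C_\n\boldsymbol Z_\n\boldsymbol M_\n\boldsymbol D_\n \) with \( \boldsymbol Z_\n = \boldsymbol I + \boldsymbol{\mathcal O}\big(|\n|^{-1}\big) \) uniformly; your chain \( \boldsymbol Y\mapsto\boldsymbol T\mapsto\boldsymbol S\mapsto\boldsymbol R \), the global parametrix built from \( S_\n^{(k)} \), \( \chi_\n \), \( w_{\n,i} \), the two-sided collection of boundary values on the bands, and the uniformity-via-continuity-in-\(\vc\) discussion all match. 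The one local slip is that not every endpoint \( \alpha_{\n,i},\beta_{\n,i} \) is a soft edge: at the unpushed endpoints (e.g.\ \( \alpha_1 \), \( \beta_d \), and any endpoint where no pushing occurs) the equilibrium density has an inverse-square-root singularity and the correct local model is the Bessel parametrix rather than Airy --- a standard substitution that does not affect the structure of the argument or the conclusion.
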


\begin{remark}
Observe that in the statement of the above theorem the functions \( S_\n^{(k)} \) can be replaced by their limits \( S_\vc^{(k)} \) at the expense of possibly loosing \( |\n|^{-1}\)-rate of convergence. Moreover, if the sequence \( \mathcal N_\vc \) is such that no pushing effect occurs for all its indices large enough, then \( \RS_\n = \RS_\vc \) and \( S_\n^{(k)} = S_\vc^{(k)} \) any way for all such indices.
\end{remark}

Let \( \Pi_\n(\z) \) be a rational function on \( \RS_\n \) with the zero/pole divisor and the normalization given by
\[
2\big(\infty^{(1)} + \cdots + \infty^{(d)} \big) - \mathcal D_\n \quad \text{and} \quad \Pi_\n^{(0)}(\infty) =1,
\]
where \( \mathcal D_\n \) is the divisor of ramification points of \( \RS_\n \). When \( d=1 \), it in fact holds that
\[
\Pi\big(z^{(k)}\big) = \frac{z-(\alpha+\beta)/2 + (-1)^k\sqrt{(z-\alpha)(z-\beta)}}{2\sqrt{(z-\alpha)(z-\beta)}}, \quad k\in\{0,1\}.
\]
Then the following theorem holds.

\begin{theorem}
\label{thm:typeI}
Let the measures \( \mu_i \) be as in \eqref{weights} and polynomials \( A_\n^{(i)}(x) \) be as in \eqref{1.1} and \eqref{n_2}, \( i\in\{1,\ldots,d\} \). Further, let \( \mathcal N_\vc=\{\n\} \) be a sequence for which \eqref{multi-indices} holds. Then it holds for \( \n\in\mathcal N_\vc \) that
\[
\left\{
\begin{array}{lll}
A_\n^{(i)}(z) &=& \displaystyle -\left(1 + \mathcal O\big(|\n|^{-1}\big)\right)\frac{(\Pi_\n^{(i)}w_{\n,i})(z)}{\gamma_\n(S_\n\Phi_\n)^{(i)}(z)}, \medskip \\
A_\n^{(i)}(x) &=& \displaystyle -\left(1 + \mathcal O\big(|\n|^{-1}\big)\right)\frac{(\Pi_\n^{(i)}w_{\n,i})_+(x)}{\gamma_\n(S_\n\Phi_\n)_+^{(i)}(x)}  - \left(1 + \mathcal O\big(|\n|^{-1}\big)\right) \frac{(\Pi_\n^{(i)}w_{\n,i})_-(x)}{\gamma_\n(S_\n\Phi_\n)_-^{(i)}(x)},
\end{array}
\right.
\]
where the first relation holds uniformly on closed subsets of \( \overline{\mathbb{C}} \setminus \Delta_{\vc,i} \) and the second one holds uniformly on compact subsets of \( \Delta_{\vc,i}^\circ \), \( i\in\{1,\ldots,d\} \). Finally, let \( L_\n(z) \) be given by \eqref{Ln}. Then it holds for \( \n\in\mathcal N_\vc \) that
\begin{equation}\label{sd_asl}
L_\n(z) = \left(1 + \mathcal O\big(|\n|^{-1}\big)\right)\frac{\Pi_\n^{(0)}(z)}{\gamma_\n(S_\n\Phi_\n)^{(0)}(z)}
\end{equation}
uniformly on closed subsets of \( \overline{\mathbb{C}}\setminus \bigcup_{i=1}^d \Delta_i \).
\end{theorem}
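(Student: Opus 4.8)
\emph{Overview.} The plan is to apply the Deift--Zhou steepest descent method to the Riemann--Hilbert problem characterizing the type~I polynomials and the linear form \( Q_\n \), and to read off the asymptotics of \( A_\n^{(i)} \) and of \( L_\n=\widehat{Q_\n} \) from the recovered solution. The analysis is parallel to the one behind Theorem~\ref{thm:typeII} in Appendix~\ref{ApB}, and the organizing principle is the classical type~I/type~II duality: the \( (d+1)\times(d+1) \) matrix solving the type~I problem at \( \n \) equals, up to a permutation of rows and columns and some signs, the inverse transpose of the matrix \( \boldsymbol Y_\n \) solving the type~II problem at the same \( \n \). (Throughout, the exact degrees \( \deg P_\n=|\n| \), \( \deg A_\n^{(i)}=n_i-1 \), needed for the normalization \eqref{n_2} to make sense, are guaranteed by perfectness of the Angelesco system, Theorem~\ref{me1}.)

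\emph{Setup and transformations.} Writing the weights as in \eqref{weights}, I would consider the matrix \( \boldsymbol\Psi_\n \) whose last row is \( (A_\n^{(1)},\ldots,A_\n^{(d)},L_\n) \) up to signs and whose remaining rows are suitable independent solutions: \( \boldsymbol\Psi_\n \) is holomorphic off \( \bigcup_{i=1}^d\Delta_i \), has across each \( \Delta_i \) the unipotent jump carrying \( \rho_i \) in the \( (i,d+1) \) entry, and behaves like \( \big(I+\mathcal O(z^{-1})\big)\,\mathrm{diag}\big(z^{n_1},\ldots,z^{n_d},z^{-|\n|}\big) \) at \( \infty \) — the last two requirements being exactly the RHP encoding of \eqref{1.1} and of \eqref{n_2} (equivalently \eqref{LnInfty}). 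Since \( \boldsymbol\Psi_\n = P\,(\boldsymbol Y_\n^{-1})^T P^{-1} \) up to signs for a fixed permutation matrix \( P \), it undergoes the inverse transpose of the entire chain \( \boldsymbol Y_\n\mapsto\boldsymbol T_\n\mapsto\boldsymbol S_\n\mapsto\boldsymbol R_\n \) of Appendix~\ref{ApB}: normalization at \( \infty \) via the conformal map \( \chi_\n \) of \( \RS_\n \); opening of lenses about each \( \Delta_{\n,i}^\circ \), licit because \( \rho_i \) is holomorphic and non-vanishing near \( \Delta_i \); matching with classical local parametrices at the endpoints \( \alpha_{\n,i},\beta_{\n,i} \) (Bessel type at the pinned endpoints, Airy type at the soft edges produced by the pushing effect); and the small-norm conclusion \( \boldsymbol R_\n=I+\mathcal O(|\n|^{-1}) \), uniformly along \( \mathcal N_\vc \), whence also \( (\boldsymbol R_\n^{-1})^T=I+\mathcal O(|\n|^{-1}) \).

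\emph{Dual parametrix and extraction of the formulas.} The type~II global parametrix is built from the functions \( S_\n^{(k)} \) of \cite[Proposition~2.4]{Y16} and the Szeg\H{o}-type functions \( \Phi_\n^{(k)} \) of \cite[Proposition~2.1]{Y16}; its inverse transpose, the global parametrix for \( \boldsymbol\Psi_\n \), has ``diagonal'' entries \( 1/\big(S_\n^{(k)}\Phi_\n^{(k)}\big) \) — meaningful precisely because \( \prod_{k=0}^d S_\n^{(k)}\Phi_\n^{(k)}\equiv1 \) — and off-diagonal entries given by cofactors. A direct computation on the genus-zero surface \( \RS_\n \) identifies these cofactors, up to explicit constants, with the products \( \Pi_\n^{(k)}w_{\n,k} \): the rational function \( \Pi_\n \) with divisor \( 2\big(\infty^{(1)}+\cdots+\infty^{(d)}\big)-\mathcal D_\n \) is exactly the one making \( \Pi_\n^{(i)}w_{\n,i} \) single-valued at the ramification points \( \alpha_{\n,i},\beta_{\n,i} \), and it exists and is uniquely fixed by \( \Pi_\n^{(0)}(\infty)=1 \) since \( \deg\mathcal D_\n=2d \) by Riemann--Hurwitz. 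Unwinding the transformations then yields, for \( \n\in\mathcal N_\vc \), the stated asymptotics of \( A_\n^{(i)} \) on closed subsets of \( \overline{\mathbb{C}}\setminus\Delta_{\vc,i} \) (which eventually avoid the moving cut \( \Delta_{\n,i} \)), while on \( \Delta_{\n,i}^\circ \) the lens structure recovers \( A_\n^{(i)} \) as the sum of the two boundary values of the same ratio (they are complex conjugates because the \( A_\n^{(i)} \) have real coefficients); reading off the last diagonal entry gives \eqref{sd_asl}. The constant \( \gamma_\n \) is forced to be the one from Theorem~\ref{thm:typeII}, and consistency is automatic: the product of the type~I and type~II leading terms equals \( \Pi_\n^{(0)}(\infty)=1 \), in accordance with \( P_\n(z)L_\n(z)\to1 \) as \( z\to\infty \).

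\emph{Main obstacle.} The analytic core — the \( g \)-function/\( \chi_\n \) normalization, lens opening, endpoint parametrices, and the \( \mathcal O(|\n|^{-1}) \) estimate — is inherited from the type~II analysis and requires no new idea. The two delicate points are: (a) pinning down the exact placement of \( A_\n^{(1)},\ldots,A_\n^{(d)},L_\n \) among the entries of \( \boldsymbol\Psi_\n \) and tracking the multiplicative constants through all transformations so that the normalization \eqref{n_2} produces precisely \( \gamma_\n \); and (b) uniformity along \( \mathcal N_\vc \) in the presence of the pushing effect — one must invoke the continuity in \( \vc \) of \( \RS_\vc \), of the functions \( S_\vc^{(k)} \), and of the endpoints, so that a fixed compact set eventually avoids the cuts \( \Delta_{\n,i} \) (which converge to \( \Delta_{\vc,i} \)), and one must allow the endpoint parametrices to switch between Bessel and Airy type according to whether the constraint in the equilibrium problem is active. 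I expect (b) to be the principal obstacle.
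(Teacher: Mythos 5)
Your proposal follows essentially the same route as the paper: the type I quantities are obtained as the inverse transpose of the type II matrix \( \boldsymbol Y_\n \) (the paper cites \cite[Theorem~4.1]{GerKVA01} for the exact placement of \( L_\n \) and the \( A_\n^{(i)} \), which settles your delicate point (a)), the global parametrix is inverted explicitly via \( \boldsymbol M_\n^{-1}=\boldsymbol S_\n^{-1}\boldsymbol\Pi_\n \) with the entries identified as \( \Pi_{\n,j}^{(k)} \) by a Liouville argument on the genus-zero surface, and the small-norm estimate \( \boldsymbol Z_\n=\boldsymbol I+\boldsymbol{\mathcal O}(|\n|^{-1}) \) is simply inherited from the type II analysis. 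The extraction of the entries, the maximum-modulus argument to tame the poles of \( \Upsilon_{\n,l}^{(l)} \) at infinity, and the boundary-value identity on the cuts all match the paper's proof.
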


\subsection{JMs on infinite trees for AS: asymptotics of the Green's functions} 

The asymptotics of the recurrence coefficients and polynomials of the first type can be used to compute asymptotics of the Green's functions of the operator $\cal{J}_{\vec{\kappa}}$ defined in \eqref{ooo}. We can use \eqref{sd_f4},\eqref{sd_f5}, and \eqref{muuu} to this end. For simplicity, we consider $d=2$ and suppose that $|Y|\to+\infty$ in such a way that $\vec{N}:=\Pi(Y)$ satisfy \eqref{multi-indices}. It follows from \eqref{sd_f5} and \eqref{my-gammas} that
\begin{equation}
\label{sd_sob}
G(Y,O,z)=-\left(\kappa_1\widehat\mu_2(z)/\|\mu_2\|+\kappa_2\widehat\mu_1(z)/\|\mu_1\|  \right)^{-1}\cdot \frac{L_Y(z)}{m_Y},
\end{equation}
where \( m_Y \) was defined in \eqref{muuu}. Then the asymptotics of $m_Y$ is derived from the asymptotics of the recurrence coefficients \eqref{limit} and \eqref{sd_asl} can be employed to control asymptotics of $L_Y$.

Notice that the projection of a general path from $O$ to $Y$ to the lattice $\mathbb{N}^2$ can be complicated and it can go through many intermediate ``angular'' regimes before reaching $\Pi(Y)$ which defines the terminal value of $\vec{c}$. This makes the asymptotics of $G(Y,O,z)$ very sensitive not only to $\Pi(Y)$  but also to the path itself. However, for  generic $Y$, this asymptotics takes much simpler form. Indeed, consider a random path in $\cal{T}$ that starts at $O$ and goes to infinity so that, when moving from $Y$ to $Y_{(ch),1}$ or $Y_{(ch),2}$, we chose the next vertex with equal probability. We denote the resulting path by $\{Y^{(n)}\}, \, n=0,1,\ldots$.

\begin{proposition}
With probability one, the asymptotics of $G(Y^{(n)},O,z)$ is given by
\[
G(Y^{(n)},O,z)=-\frac{1 + \mathcal O\big(|\n|^{-1}\big)}{\kappa_1\widehat\mu_2(z)/\|\mu_2\|+\kappa_2\widehat\mu_1(z)/\|\mu_1\|}\frac1{m_{Y^{(n)}}} \frac{\Pi_\n^{(0)}(z)}{\gamma_\n(S_\n\Phi_\n)^{(0)}(z)}
\]
uniformly on closed subsets of \( \overline{\mathbb{C}}\setminus(\Delta_{(\frac 12,\frac 12),1}\cup\Delta_{(\frac 12,\frac 12),2}) \), and
\[
m^{-1}_{Y^{(n)}}=(1+o(1))^n\left(  (A_{(\frac 12,\frac 12),1} A_{(\frac 12,\frac 12),2}   \right)^{n/4}.
\]
\end{proposition}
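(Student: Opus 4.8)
The plan is to combine the exact formula \eqref{sd_sob} for the Green's function with the asymptotics \eqref{sd_asl} of $L_\n$ and the asymptotics \eqref{limit} of the recurrence coefficients; the only genuinely probabilistic ingredient is a strong law of large numbers that pins down the ``typical'' direction of the random path. Write the path from $O$ to $Y^{(n)}$ in $\cal T$ as $O=y^{(0)},y^{(1)},\dots,y^{(n)}=Y^{(n)}$ and let $\epsilon_k:=\tilde\ell_{y^{(k)}}\in\{1,2\}$ be the i.i.d.\ uniform direction chosen at the $k$-th step, so that $\Pi\big(y^{(k)}\big)=(1,1)+\sum_{j\le k}\vec e_{\epsilon_j}$ and hence $\big|\Pi\big(Y^{(n)}\big)\big|=n+2$. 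By the strong law of large numbers, $N_i(n):=\#\{k\le n:\epsilon_k=i\}$ satisfies $N_i(n)/n\to\tfrac12$ almost surely; consequently, on a probability-one event $\cal E$, the multi-indices $\n^{(n)}:=\Pi\big(Y^{(n)}\big)$ satisfy \eqref{multi-indices} with $\vc=(\tfrac12,\tfrac12)$, and in fact $\n^{(n)}/|\n^{(n)}|$ eventually stays in a fixed compact subset of $(0,1)^2$. Fixing a realization in $\cal E$, everything below is an application of the deterministic results of Section~\ref{ssec:rec} and the preceding subsections to the sequence $\{\n^{(n)}\}$.

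For the first assertion I would substitute \eqref{sd_asl}, applied to $L_{\n^{(n)}}=L_{Y^{(n)}}$, into the identity \eqref{sd_sob} (which is \eqref{sd_f5} combined with \eqref{muuu} and the values \eqref{my-gammas} of $\gamma_1,\gamma_2$ in the case $d=2$). This immediately produces the displayed expression for $G\big(Y^{(n)},O,z\big)$, with the convergence uniform on closed subsets of $\overline{\mathbb C}\setminus\bigcup_i\Delta_i$ on which $L_Y$ is analytic (recall $\Delta_{(\frac12,\frac12),i}\subseteq\Delta_i$; if no pushing effect occurs for $\vc=(\tfrac12,\tfrac12)$ these sets coincide). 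The only point needing care is that the error term $\mathcal O\big(|\n|^{-1}\big)$ in \eqref{sd_asl} is uniform over multi-index sequences whose directions remain in a compact subset of the open simplex; this is built into the Riemann--Hilbert analysis of Appendix~\ref{ApB} and is consistent with the stated continuity in $\vc$ of $\RS_\vc$, $S_\vc$, and the associated rational functions. Since the directions $\n^{(n)}/|\n^{(n)}|$ stay in such a compact set on $\cal E$, the rate $\mathcal O\big(|\n|^{-1}\big)$ is legitimate along $\{\n^{(n)}\}$.

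For the asymptotics of $m_{Y^{(n)}}$, expand the product in \eqref{muuu}. Write $A_i:=A_{(\frac12,\frac12),i}$. Since $\widetilde W_O=1$ and $\widetilde W_{y^{(k)}}=a_{\Pi(y^{(k)}_{(p)}),\tilde\ell_{y^{(k)}}}=a_{\Pi(y^{(k-1)}),\epsilon_k}$ for $k\ge1$, we get
\[
\log m_{Y^{(n)}}^{-1}=\tfrac12\sum_{k=1}^n\log a_{\Pi(y^{(k-1)}),\epsilon_k}.
\]
On $\cal E$ the multi-indices $\Pi\big(y^{(k-1)}\big)$ run to infinity with direction tending to $(\tfrac12,\tfrac12)$, so Theorem~\ref{thm:recurrence} gives $a_{\Pi(y^{(k-1)}),i}\to A_i$ ($i=1,2$) as $k\to\infty$, whence $\log a_{\Pi(y^{(k-1)}),\epsilon_k}-\log A_{\epsilon_k}\to0$ (here we use $A_i>0$, which holds because $a_{\n,i}>0$ for all $\n$ — condition (B) in \eqref{1.9}, valid for Angelesco systems by Theorem~\ref{me1} — while $A_{\vc,i}\ne0$ since $\chi_\vc$ is a conformal map, see \eqref{ABs}). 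Cesàro averaging then yields $\tfrac1n\log m_{Y^{(n)}}^{-1}=\tfrac1{2n}\sum_{k=1}^n\log A_{\epsilon_k}+o(1)$, and $\tfrac1{2n}\sum_{k=1}^n\log A_{\epsilon_k}=\tfrac12\big(\tfrac{N_1(n)}{n}\log A_1+\tfrac{N_2(n)}{n}\log A_2\big)\to\tfrac14\log(A_1A_2)$ by the strong law. Hence $\tfrac1n\log m_{Y^{(n)}}^{-1}\to\tfrac14\log(A_1A_2)$, which is precisely $m_{Y^{(n)}}^{-1}=(1+o(1))^n(A_1A_2)^{n/4}$.

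The \emph{main obstacle} is not a single hard estimate but the bookkeeping needed to feed a random sequence of multi-indices into theorems phrased for fixed sequences: one must (i) verify that the $\mathcal O\big(|\n|^{-1}\big)$ rate in \eqref{sd_asl} and the convergence in \eqref{limit} are locally uniform in the direction $\vc$, so that they persist along typical realizations of the walk, and (ii) carry out the Cesàro/strong-law argument for $\log m_{Y^{(n)}}^{-1}$ carefully, since the recurrence coefficient appearing at step $k$ is evaluated at the random lattice point $\Pi\big(y^{(k-1)}\big)$ whose direction only settles as $k\to\infty$. Once the uniformity in $\vc$ of Appendix~\ref{ApB}'s estimates is recorded, both are routine.
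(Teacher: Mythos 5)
Your proposal is correct and follows essentially the same route as the paper: substitute the asymptotics \eqref{sd_asl} of $L_{Y^{(n)}}$ into \eqref{sd_sob}, and derive the growth of $m_{Y^{(n)}}^{-1}$ from the convergence \eqref{limit} of the recurrence coefficients together with an almost-sure equidistribution of the step directions. The only (immaterial) difference is that you invoke the strong law of large numbers where the paper uses the law of the iterated logarithm, and you spell out the Ces\`aro step for $\log m_{Y^{(n)}}^{-1}$ slightly more explicitly; both tools give $N_i(n)/n\to\tfrac12$ a.s., which is all that is needed to verify \eqref{multi-indices} with $\vc=(\tfrac12,\tfrac12)$ and to evaluate the limit $\tfrac1{2n}\sum_k\log A_{\epsilon_k}\to\tfrac14\log(A_1A_2)$.
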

\begin{proof}
We will work with \eqref{sd_sob}. Consider $\{\Pi(Y^{(n)})\}$, the projection of the path $\{Y^{(n)}\}$. Project $\{\Pi(Y^{(n)})\}$ to the line $y+x=0$ in $\mathbb{R}^2$ denoting the resulting sequence by $\{\upsilon^{(n)}\}$. It is the standard random walk defined on the line $x+y=0$ with each step of the size $\sqrt 2$. By the law of iterated logarithm \cite{morters_peres}, we have
\[
\limsup_{n\to\infty} \left| \frac{\upsilon^{(n)}/\sqrt{2}}{\sqrt{2n\log\log n}} \right|=1
\]
with probability $1$. Therefore,  almost surely $\{\Pi(Y^{(n)})\}$ satisfies conditions \eqref{multi-indices} with $c_1=c_2=0.5$. We can use \eqref{sd_asl} to write
\[
L_{Y^{(n)}}(z) = \left(1 + \mathcal O\big(|\n|^{-1}\big)\right)\frac{\Pi_\n^{(0)}(z)}{\gamma_\n(S_\n\Phi_\n)^{(0)}(z)}
\]
uniformly on closed subsets of \( \overline{\mathbb{C}}\setminus(\Delta_{(\frac 12,\frac 12),1}\cup\Delta_{(\frac 12,\frac 12),2}) \), where \( \vec{n}=\Pi(Y^{(n)}) \). The asymptotics of the recursion coefficients \eqref{limit} yields
\[
m^{-1}_{Y^{(n)}} = (1+o(1))^n\prod_{j=1}^n  A_{(\frac 12,\frac 12),\xi_j}^{1/2}
\]
where $\xi_j=1$ if the projection of the path to $\mathbb{N}^2$ goes to the right at $j$-th step and $\xi_j=2$ if it goes up. Taking logarithm of both sides of this formula and using the law of iterated logarithm one more time gives
\[
\frac{\log m^{-1}_{Y^{(n)}}}{n}-\frac{\log \Bigl(A_{(\frac 12,\frac 12),1} A_{(\frac 12,\frac 12),2}\Bigr)}{4}\to 0
\]
with probability $1$. This proves claimed asymptotics.
\end{proof}

   \bigskip

\appendix

\section{}
\label{appA}

In this appendix, we prove theorem \ref{me1} and some auxiliary statements used in the main text. Part (A) is well-know (see, e.g., \cite{Ang19,Nik79,Ismail}). The positivity of coefficients $a_{\vec{n},j}$, i.e., condition (B), is a part of folklore but we provide the proof below anyway. Analog of (C) for the diagonal step-line recurrences was proved in \cite{ApKalLLRocha06}. Before giving the proof of the part (C) for the nearest neighbor recurrence coefficients, we list several lemmas. Some of them are well-known but we state them for completeness of the exposition. Recall that $\Delta_j$ is the smallest interval containing $\supp \,\mu_j$. Without loss of generality we assume that $\Delta_1<\Delta_2<\ldots<\Delta_d$.

\begin{lemma} \label{sd_c1}We have representations
\begin{equation}\label{por1}
a_{\vec{n},j}=\frac{\displaystyle \int_{\mathbb{R}}
P_{\vec{n}}(x)\,x^{n_j}d\mu_j(x)}{\displaystyle \int_{\mathbb{R}}
P_{\vec{n}-\vec{e}_j}(x)\, x^{n_j-1}d\mu_j(x)}, \quad \vec{n}\in \mathbb{Z}_+^d, \quad j\in \{1,\ldots,d\},\quad n_j-1\ge 0,
\end{equation}
and
\begin{equation}\label{por}
b_{\vec{n}-\vec{e}_j,j}=\int_{\mathbb{R}}
x^{|\vec{n}|}Q_{\vec{n}}(x)-\int_{\mathbb{R}}
x^{|\vec{n}|-1}Q_{\vec{n}-\vec{e}_j}(x),\quad \vec{n}\in \mathbb{N}^d, \quad j\in \{1,\ldots,d\}\,.
\end{equation}

\end{lemma}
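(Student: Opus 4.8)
The plan is to read off both identities directly from the lattice recurrence relations \eqref{1.5} and \eqref{1.7} by testing them against the appropriate monomials and invoking the orthogonality relations \eqref{1.1}, \eqref{1.2} together with the normalization \eqref{n_2}. Perfectness of $\vec\mu$ (assumption (A) in \eqref{1.9}) guarantees that all the polynomials and forms below are well defined and that \eqref{1.5}, \eqref{1.7} hold.

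To obtain \eqref{por1}, I would fix $\vec n\in\Z_+^d$ and $j$ with $n_j\ge 1$, write \eqref{1.7} at the index $\vec n$, multiply both sides by $x^{n_j-1}$, and integrate against $\mu_j$. On the right-hand side, $P_{\vec n+\vec e_j}$ is orthogonal to $x^{n_j-1}$ in $L^2(\mu_j)$ because the $j$-th component of $\vec n+\vec e_j$ equals $n_j+1$; the term $b_{\vec n,j}P_{\vec n}$ contributes nothing since $P_{\vec n}\perp x^{n_j-1}$ in $L^2(\mu_j)$; and for $l\neq j$ the index $\vec n-\vec e_l$ still has $j$-th component $n_j$, so $P_{\vec n-\vec e_l}$ is likewise orthogonal to $x^{n_j-1}$ with respect to $\mu_j$ (this includes the degenerate cases $n_l=0$, where the corresponding term vanishes by convention). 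Hence only the term $a_{\vec n,j}P_{\vec n-\vec e_j}$ survives on the right and the left-hand side equals $\int x^{n_j}P_{\vec n}\,d\mu_j$, which yields \eqref{por1} after division. The one point needing justification is that the denominator $\int x^{n_j-1}P_{\vec n-\vec e_j}\,d\mu_j$ is nonzero: if it vanished, then the monic polynomial $P_{\vec n-\vec e_j}$ of degree $|\vec n|-1$ would satisfy all the orthogonality conditions defining the multi-index $\vec n$, contradicting the normality of $\vec n$.

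To obtain \eqref{por}, I would fix $\vec n\in\N^d$, write \eqref{1.5} at the index $\vec n$ with the chosen $j$, multiply by $x^{|\vec n|-1}$, and integrate over $\R$. Each form $Q_{\vec n+\vec e_l}$ is orthogonal to all monomials of degree at most $|\vec n+\vec e_l|-2=|\vec n|-1$ by \eqref{1.1}, so the entire sum $\sum_l a_{\vec n,l}Q_{\vec n+\vec e_l}$ drops out; the middle term contributes $b_{\vec n-\vec e_j,j}\int x^{|\vec n|-1}Q_{\vec n}=b_{\vec n-\vec e_j,j}$ by the normalization \eqref{n_2}; the left-hand side is $\int x^{|\vec n|}Q_{\vec n}$; and the first term on the right is exactly $\int x^{|\vec n|-1}Q_{\vec n-\vec e_j}$. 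Rearranging gives \eqref{por}.

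There is no substantial obstacle here: the argument is a bookkeeping of which orthogonality relation annihilates which term, combined with the normalization \eqref{n_2}. The only slightly delicate point is the non-vanishing of the denominator in \eqref{por1}, which is precisely where perfectness of $\vec\mu$ (equivalently, normality of the relevant multi-indices) is used; everything else is immediate from \eqref{1.1}, \eqref{1.2}, and \eqref{n_2}.
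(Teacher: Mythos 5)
Your proof is correct and follows exactly the paper's argument: multiply \eqref{1.7} by $x^{n_j-1}$ and integrate against $\mu_j$ for \eqref{por1}, multiply \eqref{1.5} by $x^{|\vec n|-1}$ and integrate for \eqref{por}, then let the orthogonality relations \eqref{1.1}, \eqref{1.2} and normalization \eqref{n_2} annihilate all but the desired terms. Your normality-based justification that the denominator in \eqref{por1} is nonzero is a valid (and more general) addition; the paper defers this point to a remark and ultimately exhibits it explicitly for Angelesco systems via the factorization of $P_{\vec n}$ in the proof of condition (B).
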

\begin{proof}
To get \eqref{por1}, consider \eqref{1.7}, multiply it by $x^{n_j-1}$ and integrate against $\mu_j$. To prove \eqref{por}, take \eqref{1.5}, multiply  it by $x^{|\vec{n}|-1}$ and integrate over the line. Orthogonality conditions \eqref{1.1} and normalization \eqref{n_2} give \eqref{por1} and \eqref{por}.
\end{proof}

\begin{remark}
 Formula \eqref{por1} is well-known (see, e.g., \cite{Ismail}). Later in the text, we will explain why the denominator in \eqref{por1} is non-zero.\end{remark}

We will use the following  lemma. Its first claim is well-known (\cite{Ismail}, theorem
23.1.4 and \cite{w1}). 
\begin{lemma}
\label{lem:a4}
 $P_{\vec{n}}$ has $n_j$ simple zeros on $\Delta_j$, the zeros of $P_{\vec{n}+\vec{e}_m}$ and $P_{\vec{n}}$ interlace for any \( m\in\{1,\ldots,m\} \). Moreover,  let \( \{x_{\vec n+\vec e_m,i}\}_{i=1}^{|\vec n|+1} \) be the zeros of \( P_{\vec n +\vec e_m} \) labeled in the increasing order. Then
\[
x_{\vec n+\vec e_j,1} < x_{\vec n+\vec e_k,1} < x_{\vec n+\vec e_j,2} < x_{\vec n+\vec e_k,2} < \ldots < x_{\vec n+\vec e_j,|\vec n|+1} < x_{\vec n+\vec e_k,|\vec n|+1}
\]
for any \( j<k \), \( j,k\in\{1,\ldots,d\} \). That is, the zeros of \( P_{\vec n +\vec e_k} \) and \( P_{\vec n +\vec e_j} \) interlace and the zeros of  \( P_{\vec n +\vec e_k} \) dominate the ones of  \( P_{\vec n +\vec e_j} \).
\end{lemma}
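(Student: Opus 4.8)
The plan is to reduce the statement to the two facts that are already classical (for these we cite Ismail, Theorem~23.1.4, and \cite{w1}): that $P_{\vec{n}}$ has exactly $n_j$ zeros in the interior $\Delta_j^\circ$ of $\Delta_j$, all simple and none elsewhere, and that the zeros of $P_{\vec{n}+\vec{e}_m}$ and $P_{\vec{n}}$ strictly interlace for every $m$. For completeness I would first recall the short argument for the former: if $P_{\vec{n}}$ had at most $n_j-1$ sign changes on $\Delta_j^\circ$ for some $j$, take $W$ to be the monic polynomial vanishing exactly at those sign changes, so $\deg W\le n_j-1$; then $P_{\vec{n}}W$ has constant sign on $\Delta_j$ and does not vanish $\mu_j$-a.e., contradicting $\int P_{\vec{n}}W\,\dd\mu_j=0$, which holds by \eqref{1.2}. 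Hence $P_{\vec{n}}$ has at least $n_j$ sign changes on each $\Delta_j^\circ$, and since $\deg P_{\vec{n}}=|\vec{n}|$ by normality (condition~(A)), there are exactly $n_j$ of them, all zeros are simple, and all of them lie in $\bigcup_j\Delta_j^\circ$. The interlacing of consecutive $P_{\vec{n}}$ and $P_{\vec{n}+\vec{e}_m}$ comes from the same circle of ideas; I will use it in the following form: writing $\xi_1<\cdots<\xi_{|\vec{n}|}$ for the zeros of $P_{\vec{n}}$, exactly one zero of $P_{\vec{n}+\vec{e}_m}$ lies in each of the $|\vec{n}|+1$ intervals into which the $\xi_i$ partition $\mathbb R$ (the two unbounded ones and the $|\vec{n}|-1$ bounded ones), each such zero is simple, and $P_{\vec{n}+\vec{e}_m}(\xi_i)\neq0$ for all $i$ (no common zeros with $P_{\vec{n}}$).

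For the new ``moreover'' part, fix $j<k$. Subtracting the recurrence \eqref{1.7} written at the index $\vec{n}$ for the direction $\vec{e}_j$ from the one for $\vec{e}_k$ (the very computation that produces \eqref{cc2}) gives the exact polynomial identity
\[
P_{\vec{n}+\vec{e}_j}(x)-P_{\vec{n}+\vec{e}_k}(x)=c\,P_{\vec{n}}(x),\qquad c:=b_{\vec{n},k}-b_{\vec{n},j},
\]
where $c>0$ because $b_{\vec{n},i}$ is increasing in $i$ — this is lemma~\ref{sd_monot}, which encodes the ordering $\Delta_1<\cdots<\Delta_d$.

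Now let $I$ be any one of the $|\vec{n}|+1$ intervals determined by the zeros $\xi_1<\cdots<\xi_{|\vec{n}|}$ of $P_{\vec{n}}$. On $I$ the difference $P_{\vec{n}+\vec{e}_j}-P_{\vec{n}+\vec{e}_k}=cP_{\vec{n}}$ has constant sign, and since $c>0$ this sign is that of $P_{\vec{n}}$ on $I$. By the interlacing recalled above, each of $P_{\vec{n}+\vec{e}_j}$ and $P_{\vec{n}+\vec{e}_k}$ has exactly one simple zero in $I$, and at each endpoint of $I$ that is a $\xi_i$ the two polynomials take the same nonzero value; going across $I$ each of them changes sign exactly once and in the same direction, so the one whose graph lies on the side of the common endpoint values reaches $0$ first. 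Since on $I$ the larger of the two is $P_{\vec{n}+\vec{e}_j}$ precisely when $P_{\vec{n}}>0$ on $I$, a short check of the few sign configurations (the sign of $P_{\vec{n}}$ on the intervals adjacent to $\xi_i$ and the value $P_{\vec{n}+\vec{e}_k}(\xi_i)$ being linked in the standard way, and the two unbounded intervals treated likewise) shows that in \emph{every} $I$ the zero of $P_{\vec{n}+\vec{e}_j}$ precedes the zero of $P_{\vec{n}+\vec{e}_k}$. Together with the trivial inequalities $z<\xi_i<z'$, where $z$ is the $P_{\vec{n}+\vec{e}_k}$-zero just below $\xi_i$ and $z'$ the $P_{\vec{n}+\vec{e}_j}$-zero just above, this yields the full chain of strict inequalities, i.e. the zeros of $P_{\vec{n}+\vec{e}_k}$ both interlace and dominate those of $P_{\vec{n}+\vec{e}_j}$.

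The only genuinely external input is lemma~\ref{sd_monot}, whose proof is independent of this one; beyond that, the work is the (routine) bookkeeping of signs at the endpoints and on the two unbounded intervals, which I expect to be the only mildly delicate point. If one prefers not to cite the interlacing of $P_{\vec{n}}$ with $P_{\vec{n}+\vec{e}_m}$ but to reprove it, the cleanest route is: first show $P_{\vec{n}}$ and $P_{\vec{n}+\vec{e}_m}$ are coprime, by induction on $|\vec{n}|$ using \eqref{1.7}, the positivity (B) of the $a_{\vec{n},l}$, and coprimality of $P_{\vec{n}}$ with each $P_{\vec{n}-\vec{e}_l}$; then track the $|\vec{n}|+1$ zeros of the pencil $P_{\vec{n}+\vec{e}_m}+tP_{\vec{n}}$, $t\in\mathbb R$, which (by coprimality) depend strictly monotonically on $t$ and, by the sign-change argument applied to each member of the pencil, remain real and simple, hence never collide and stay trapped between consecutive zeros of $P_{\vec{n}}$, giving the interlacing at $t=0$.
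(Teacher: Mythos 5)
Your reduction of the first two claims to the classical sign-change and interlacing arguments is fine, and the interval-by-interval comparison of $P_{\vec n+\vec e_j}$ and $P_{\vec n+\vec e_k}$ via the identity $P_{\vec n+\vec e_j}-P_{\vec n+\vec e_k}=(b_{\vec n,k}-b_{\vec n,j})P_{\vec n}$ is a workable mechanism. The problem is the input $c=b_{\vec n,k}-b_{\vec n,j}>0$. You assert that lemma~\ref{sd_monot} is ``independent of this one,'' but in the paper lemma~\ref{sd_monot} is proved \emph{from} the second claim of lemma~\ref{lem:a4}: the monotonicity $b_{\vec n,j}<b_{\vec n,k}$ is obtained there by writing $b_{\vec n,j}-b_{\vec n,k}$ as the sum of the differences $x_{\vec n+\vec e_j,i}-x_{\vec n+\vec e_k,i}$ and invoking exactly the domination you are trying to establish. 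The sign of $c$ and the direction of the domination are equivalent statements (your own argument shows that $c<0$ would reverse every inequality in the chain), so as written the proof is circular.

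The gap is repairable using the paper's own closing step. What you can get without lemma~\ref{sd_monot} is $c\neq 0$: if $b_{\vec n,j}=b_{\vec n,k}$ then $P_{\vec n+\vec e_j}=P_{\vec n+\vec e_k}$ would satisfy $|\vec n|+2$ orthogonality conditions and hence would need $|\vec n|+2$ sign changes, impossible for a polynomial of degree $|\vec n|+1$. With $c\neq0$ your interval analysis still shows that in every gap of $P_{\vec n}$ the two zeros are distinct and ordered consistently (the same polynomial comes first in every gap), i.e.\ the zeros interlace; the direction is then pinned down by counting, not by the sign of $c$: $P_{\vec n+\vec e_j}$ has $n_j+1$ zeros on $\Delta_j$ while $P_{\vec n+\vec e_k}$ has only $n_j$ there, and $\Delta_j<\Delta_k$, which forces the zeros of $P_{\vec n+\vec e_k}$ to dominate. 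This is precisely how the paper concludes; its route to the interlacing itself is different from yours --- it shows that every nontrivial combination $AP_{\vec n+\vec e_k}+BP_{\vec n+\vec e_j}$ has only real simple zeros, deduces that the two polynomials are coprime, and applies the standard non-vanishing-determinant argument --- but your difference identity is an acceptable substitute once the sign issue is handled as above.
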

\begin{proof}
We present the proof of the second claim, it can be easily adjusted to handle the first one as well. Given constants \( A,B \) such that \( |A|+|B|> 0 \), the polynomial \( AP_{\vec n +\vec e_k}(x) + BP_{\vec n +\vec e_j}(x) \) has at most \( |\vec n|+1 \) zeros and satisfies \( n_i \)  orthogonality conditions on \( \Delta_i \) for each \( i\in\{1,\ldots,d\} \). Therefore, it must have at least \( n_i \) zeros of odd multiplicity on \( \Delta_ i \) for each $i$. However, since the total number of real zeros is at most \( |\vec n|+1 \), we conclude that all of them are simple. We claim that \( P_{\vec n +\vec e_k} \) and \( P_{\vec n +\vec e_j} \) do not have a common zero. Indeed, if there were a common zero \( x_* \), then by taking \( A=P_{\vec n +\vec e_j}^\prime(x_*) \) and \( B=-P_{\vec n +\vec e_k}^\prime(x_*) \), we would obtain a polynomial with a double zero at \( x_* \) (\(|A|+|B|> 0\) holds as all the zeros of \( P_{\vec n +\vec e_k} \) and \( P_{\vec n +\vec e_j} \) are simple as well). Thus, the expression
\[
P_{\vec n +\vec e_j}(y)P_{\vec n +\vec e_k}(x) - P_{\vec n +\vec e_k}(y)P_{\vec n +\vec e_j}(x), 
\]
as a function of \(x\), vanishes at \( y \) and has only simple zeros. This implies that
\[
\det\left[\begin{matrix} P_{\vec n +\vec e_j}(y) & P_{\vec n +\vec e_k}(y) \medskip \\ P_{\vec n +\vec e_j}^\prime(y) & P_{\vec n +\vec e_k}^\prime(y) \end{matrix}\right] \neq 0
\]
for all \( y \). In a standard fashion (see, e.g., \cite{w1}, proof of theorem 2.1) this leads to the interlacing of the zeros of \( P_{\vec n +\vec e_k} \) and \( P_{\vec n +\vec e_j} \). Since \( \Delta_j<\Delta_k \) and \( P_{\vec n +\vec e_j} \) has \( n_j+1 \) zeros on \( \Delta_j \) while  \( P_{\vec n +\vec e_k} \) has \( n_j \) zeros  there, the domination property follows from interlacing.
\end{proof}

\begin{proof}[Proof of theorem~\ref{me1}: condition (B)]
According to lemma~\ref{lem:a4}, we can write
\[
P_{\vec{n}}=p_{\vec{n}}^{(1)} \cdots p_{\vec{n}}^{(d)}\,,
\]
where each polynomial $p^{(j)}_{\vec{n}}$ is monic and has $n_j$ zeros on $\Delta_j$.  Thus, we can rewrite \eqref{por1} as
\begin{equation}\label{loi}
a_{\n,j}=\frac{\displaystyle \int_{\Delta_j} \big(p_\n^{(j)}(x)\big)^2\prod_{i\neq j}p_\n^{(i)}(x)d\mu_j(x)}{\displaystyle \int_{\Delta_j} \big(p_{\n-\vec{e}_j}^{(j)}(x)\big)^2\prod_{i\neq j}p_{\n-\vec{e}_j}^{(i)}(x) d\mu_j(x)}\,.
\end{equation}
Since the products \( \prod_{i\neq j}p_\n^{(i)}(x) \) and \( \prod_{i\neq j}p_{\n-\vec e_j}^{(i)}(x) \) are non-vanishing and have the same sign on \( \Delta_j \) according to lemma~\ref{lem:a4}, the positivity of $a_{\n,j}$ follows.
\end{proof}

As before, let us write \( \Delta_j =[\alpha_j,\beta_j] \), \( j\in\{1,\ldots,d\} \). We further put \( g_i:=\alpha_{i+1}-\beta_i \), \(i\in\{1,\ldots, d-1\} \), and set $\Delta_{\max}:=[\alpha_1,\beta_d]$, $g_{\min}:=\min_i g_i$.

\begin{lemma}
\label{loi1}
Let $\n\in \mathbb{N}^d$. We have
\[
\sup_{x\in \Delta_j}\left|\frac{p_{\vec{n}}^{(m)}(x)}{p_{\vec{n}-\vec{e}_j}^{(m)}(x)}\right|\le \frac{|\Delta_{\max}|}{g_{\min}}, \quad m\neq j\,.
\]
\end{lemma}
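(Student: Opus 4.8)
The plan is to reduce the estimate to a telescoping product over interlacing zeros. First I would recall from the proof of condition (B) given above that \( P_\n = p_\n^{(1)}\cdots p_\n^{(d)} \), where \( p_\n^{(i)} \) is monic of degree \( n_i \) and has all of its \( n_i \) zeros simple and lying in \( \Delta_i^\circ \); since \( m\ne j \), the \( m \)-th coordinate of \( \n-\vec e_j \) still equals \( n_m \), so likewise \( p_{\n-\vec e_j}^{(m)} \) is monic of degree \( n_m \) with all zeros in \( \Delta_m^\circ \). Denote by \( y_1<\cdots<y_{n_m} \) and \( z_1<\cdots<z_{n_m} \) the zeros of \( p_\n^{(m)} \) and \( p_{\n-\vec e_j}^{(m)} \), respectively, so that the quotient to be estimated is \( \prod_{k=1}^{n_m}(x-y_k)/(x-z_k) \).

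The key structural input is interlacing. Applying lemma~\ref{lem:a4} to the pair of multi-indices \( \n-\vec e_j \) and \( \n=(\n-\vec e_j)+\vec e_j \) shows that the zeros of \( P_\n \) and \( P_{\n-\vec e_j} \) interlace; since all \( |\n| \) zeros of \( P_\n \) and all \( |\n|-1 \) zeros of \( P_{\n-\vec e_j} \) lie in \( \bigcup_i\Delta_i \), and the interval \( \Delta_m \) is separated from all other \( \Delta_i \), the \( 2n_m \) numbers \( \{y_k\}\cup\{z_k\} \) occupy a contiguous block of the globally sorted list of zeros and therefore interlace among themselves on \( \Delta_m^\circ \). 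Hence only two patterns are possible: either \( y_1<z_1<y_2<\cdots<y_{n_m}<z_{n_m} \) or \( z_1<y_1<z_2<\cdots<z_{n_m}<y_{n_m} \).

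Next I would fix \( x\in\Delta_j \). Because \( \Delta_1<\cdots<\Delta_d \), all the zeros \( y_k,z_k\in\Delta_m^\circ \) lie strictly on one side of \( x \); the two sides are interchanged by the substitution \( x\mapsto-x \) (which reverses the order of the intervals but preserves \( g_{\min} \) and \( |\Delta_{\max}| \)), so it is enough to treat \( j>m \), i.e. \( x>\beta_m\ge y_k,z_k \), whence every factor \( (x-y_k)/(x-z_k) \) is positive. In the pattern \( z_1<y_1<\cdots<z_{n_m}<y_{n_m} \) one has \( z_k<y_k<x \), so \( 0<x-y_k<x-z_k \) and the whole product is less than \( 1 \). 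In the pattern \( y_1<z_1<\cdots<y_{n_m}<z_{n_m} \) I would regroup
\[
\prod_{k=1}^{n_m}\frac{x-y_k}{x-z_k} = (x-y_1)\left(\prod_{k=1}^{n_m-1}\frac{x-y_{k+1}}{x-z_k}\right)\frac1{x-z_{n_m}};
\]
since \( z_k<y_{k+1}<x \), every factor in the middle product lies in \( (0,1) \), so the quotient is at most \( (x-y_1)/(x-z_{n_m}) \).

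It then remains to collect elementary bounds. In the second pattern the numerator satisfies \( x-y_1\le\beta_d-\alpha_1=|\Delta_{\max}| \) (both \( x \) and \( y_1 \) lie in \( [\alpha_1,\beta_d] \)), while \( x-z_{n_m}\ge x-\beta_m\ge\alpha_j-\beta_m\ge g_m\ge g_{\min} \) because \( \alpha_j-\beta_m \) dominates the sum of the gaps \( g_m,\ldots,g_{j-1} \); thus the quotient is \( \le|\Delta_{\max}|/g_{\min} \). In the first pattern the quotient is \( <1\le|\Delta_{\max}|/g_{\min} \), since \( d\ge2 \) and every gap \( g_i \) is contained in \( [\alpha_1,\beta_d] \). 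Combining the two patterns with the reflection reduction finishes the proof. I expect the only genuinely non-routine point to be the interlacing of the zeros of the two \( m \)-th factors; the rest is the telescoping bookkeeping, and the care needed there is to invoke lemma~\ref{lem:a4} with the correct base index and to exploit the separation of the \( \Delta_i \) to localize the interlacing to \( \Delta_m \).
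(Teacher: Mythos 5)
Your proof is correct and follows essentially the same route as the paper: both arguments invoke the interlacing of the zeros of \(P_{\vec n}\) and \(P_{\vec n-\vec e_j}\) from lemma~\ref{lem:a4}, split into the two possible interlacing patterns on \(\Delta_m\), bound the product by \(1\) in one case, and telescope the remaining case down to a single ratio of an extreme distance (at most \(|\Delta_{\max}|\)) to a gap (at least \(g_{\min}\)). The only difference is presentational: the paper sorts the distances \(|z_k-x|\) directly, whereas you sort the zeros by position and reduce to one side of \(x\) by a reflection, which amounts to the same bookkeeping.
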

\begin{proof}
Put $x_k=|z_{k,\vec{n},m}-x|$, $\xi_k=|z_{k,\vec{n}-\vec{e}_j,m}-x|$, where $x\in \Delta_j$ and $\{z_{k,\vec{n},m}\}$ are zeros of $p^{(m)}_{\vec{n}}$ on $\Delta_m$ and we assume that \( x_k,\xi_k \) are monotonically increasing with \( k \). It follows from lemma~\ref{lem:a4} that either \( x_i\leq \xi_i \), \( i\in\{1,\ldots,n_m\} \), in which case
\[
 \frac{x_1\cdots x_{n_m}}{\xi_1\cdots\xi_{n_m}} \leq 1 \leq \frac{|\Delta_{\max}|}{g_{\min}},
\]
or \( 0 < g_{\min} \leq \xi_1\leq x_1 \leq \xi_2 \leq x_2 \leq \ldots \leq \xi_{n_m} \leq x_{n_m}\leq \Delta_{\max} \), in which case
\begin{equation}
\label{sd_kl}
 \frac{x_1\ldots x_{n_m}}{\xi_1\ldots
\xi_{n_m}}= \left(\frac{x_1}{\xi_2} \cdots \frac{x_{n_m-1}}{\xi_{n_m}}\right)\cdot
\frac{x_{n_m}}{\xi_1} \le \frac{x_{n_m}}{\xi_1} \le \frac{|\Delta_{\max}|}{g_{\min}}. \qedhere
\end{equation}
\end{proof}

 If $\sigma$ is positive  measure on $\mathbb{R}$, denote the corresponding monic orthogonal polynomial by
$P_n(z,\sigma)$ or just $P_n(\sigma)$.

\begin{lemma}\label{li3} We have
\[
\|P_n(\sigma)\|_{\sigma}^2=\min_{Q:\, \deg Q=n, Q {\,\rm is\, monic}}\|Q\|_{\sigma}^2\,.
\]

\end{lemma}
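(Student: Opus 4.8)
The plan is to derive the minimality from the Pythagorean theorem in \( L^2(\sigma) \) together with the orthogonality relations \eqref{1.11}. First I would record that the monic orthogonal polynomials \( P_0(\sigma),\ldots,P_n(\sigma) \) form a basis of the space of polynomials of degree at most \( n \); this uses that \( \supp\sigma \) is an infinite set, so that each \( \|P_k(\sigma)\|_\sigma>0 \) and the \( P_k(\sigma) \) are linearly independent. Consequently, given any monic \( Q \) of degree \( n \), the difference \( Q-P_n(\sigma) \) has degree at most \( n-1 \) and can be written as \( Q-P_n(\sigma)=\sum_{k=0}^{n-1}c_kP_k(\sigma) \) for suitable real constants \( c_k \).

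Next I would use that \eqref{1.11} gives \( \langle P_n(\sigma),x^l\rangle_\sigma=0 \) for \( l\in\{0,\ldots,n-1\} \), hence \( \langle P_n(\sigma),P_k(\sigma)\rangle_\sigma=0 \) for every \( k<n \). Expanding the norm of \( Q=P_n(\sigma)+\sum_{k=0}^{n-1}c_kP_k(\sigma) \) and using that all quantities involved are real, the cross term vanishes and one obtains
\[
\|Q\|_\sigma^2=\|P_n(\sigma)\|_\sigma^2+\Big\|\sum_{k=0}^{n-1}c_kP_k(\sigma)\Big\|_\sigma^2\ge\|P_n(\sigma)\|_\sigma^2.
\]
Since \( P_n(\sigma) \) is itself monic of degree \( n \), the infimum over monic polynomials of degree \( n \) is attained and equals \( \|P_n(\sigma)\|_\sigma^2 \); moreover equality forces every \( c_k=0 \), i.e. \( Q=P_n(\sigma) \), so the minimizer is in fact unique.

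There is no real obstacle here: the argument is the standard least-squares / Fourier-coefficient computation. The only point requiring a word of care is the legitimacy of expanding \( Q-P_n(\sigma) \) in the basis \( \{P_k(\sigma)\}_{k=0}^{n-1} \), which is guaranteed by the standing assumption that \( \supp\sigma \) is not a finite set of points, ensuring that orthogonal polynomials of all degrees exist and have positive norm.
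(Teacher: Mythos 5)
Your argument is correct and is exactly the standard computation the paper is alluding to when it dismisses the proof with ``This follows from the orthogonality conditions'': write \( Q = P_n(\sigma) + (Q-P_n(\sigma)) \) with the second summand of degree at most \( n-1 \), kill the cross term by orthogonality, and apply Pythagoras. Nothing to add; your extra remarks on the uniqueness of the minimizer and on why the lower-degree polynomials span are harmless refinements of the same proof.
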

\begin{proof}
This follows from the orthogonality conditions.
\end{proof}

\begin{lemma} 
\label{li4}
Let $\sigma$ be positive measure on $\mathbb{R}$ with compact support. Set $\Delta:=\mathrm{Ch}(\supp\,\sigma)$. Then
\[
\quad \sup_{n\in \mathbb{Z}_+} \frac{\|P_{n+1}(\sigma)\|^2_{\sigma}}{\|P_{n}(\sigma)\|_{\sigma}^2}\le (|\Delta|/2)^2\,.
\]
\end{lemma}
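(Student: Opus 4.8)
The plan is to reduce the statement to the extremal property of orthogonal polynomials recorded in Lemma~\ref{li3}. Since $P_{n+1}(\sigma)$ has the smallest $\sigma$-norm among all \emph{monic} polynomials of degree $n+1$, it suffices to produce one convenient monic competitor of degree $n+1$ whose $\sigma$-norm is controlled by $(|\Delta|/2)\|P_n(\sigma)\|_\sigma$.

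First I would fix $\lambda$ to be the midpoint of the interval $\Delta=\mathrm{Ch}(\supp\,\sigma)=[\alpha,\beta]$, so that $|x-\lambda|\le|\Delta|/2$ for every $x\in\Delta$, and in particular $\sigma$-a.e., because $\supp\,\sigma\subseteq\Delta$. Then $Q(x):=(x-\lambda)P_n(x,\sigma)$ is monic of degree $n+1$, and Lemma~\ref{li3} together with the pointwise bound on $|x-\lambda|$ on the support of $\sigma$ gives
\[
\|P_{n+1}(\sigma)\|_\sigma^2\;\le\;\|Q\|_\sigma^2\;=\;\int(x-\lambda)^2P_n^2(x,\sigma)\,d\sigma(x)\;\le\;\Bigl(\frac{|\Delta|}{2}\Bigr)^2\|P_n(\sigma)\|_\sigma^2 .
\]
Since the choice of $\lambda$ does not depend on $n$, taking the supremum over $n\in\mathbb{Z}_+$ yields the claim.

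There is essentially no obstacle here; the only point to keep in mind is that $\supp\,\sigma$ is contained in $\Delta$ by the definition of the convex hull, which is exactly what makes the elementary estimate for $|x-\lambda|$ valid throughout the support. Alternatively, the statement is equivalent to the bound \eqref{lope} applied to $\sigma$, via the identities $\|P_n(\sigma)\|_\sigma=m_n$ and $a_{n-1}=m_n^2/m_{n-1}^2$, but the direct argument above is shorter and does not require passing through the normalized recurrence \eqref{1.19}.
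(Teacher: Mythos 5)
Your proof is correct. The paper itself disposes of this lemma by pointing back to the discussion between \eqref{1.17} and \eqref{lope}: there one identifies $\|P_{n+1}\|_\sigma^2/\|P_n\|_\sigma^2$ with the recurrence coefficient $a_n=c_n^2$, writes $c_n=\int(x-\lambda)p_n(x)p_{n+1}(x)\,d\sigma(x)$ using the orthonormal recurrence \eqref{1.19}, and applies Cauchy--Schwarz with $\lambda$ the midpoint of $\Delta$. You instead invoke the $L^2(\sigma)$-minimality of the monic orthogonal polynomial (Lemma~\ref{li3}) against the competitor $(x-\lambda)P_n(x,\sigma)$, which gives
\[
\|P_{n+1}(\sigma)\|_\sigma^2\le\int(x-\lambda)^2P_n^2(x,\sigma)\,d\sigma(x)\le\bigl(|\Delta|/2\bigr)^2\|P_n(\sigma)\|_\sigma^2
\]
directly. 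Both arguments rest on the same elementary fact $|x-\lambda|\le|\Delta|/2$ on $\supp\,\sigma$, but yours bypasses the normalization to orthonormal polynomials and the recurrence entirely, which makes it marginally more self-contained; the paper's route has the side benefit of producing the bound \eqref{lope} on the off-diagonal Jacobi entries themselves, which is used elsewhere. Your closing remark correctly identifies the equivalence of the two formulations via $a_{n-1}=m_n^2/m_{n-1}^2$.
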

\begin{proof} 
See the explanation between \eqref{1.17} -- \eqref{lope}.
\end{proof}

Define
\[
d\sigma_\n^{(j)}:=\Bigl(\prod_{m\neq j}\big|p_{\vec{n}}^{(m)}\big|\Bigr)\cdot d\mu_j\,.
\]
Then the following lemma trivially holds.

\begin{lemma} 
\label{li5}
Polynomial $p_\n^{(j)}$ is \( n_j \)-th monic orthogonal polynomial with respect to the measure $\sigma_\n^{(j)}$, i.e.,
$p_\n^{(j)}=P_{n_j}\big(\sigma_\n^{(j)}\big)$.
\end{lemma}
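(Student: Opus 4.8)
The plan is to verify that $p_\n^{(j)}$ satisfies the three defining properties of the $n_j$-th monic orthogonal polynomial associated with $\sigma_\n^{(j)}$: it is monic, it has degree exactly $n_j$, and it is orthogonal in $L^2\big(\sigma_\n^{(j)}\big)$ to every polynomial of degree at most $n_j-1$. The first two are immediate from the factorization $P_\n = p_\n^{(1)}\cdots p_\n^{(d)}$ furnished by Lemma~\ref{lem:a4}, in which each $p_\n^{(j)}$ is monic with exactly $n_j$ zeros, all lying on $\Delta_j$. First I would also record that $\sigma_\n^{(j)}$ is a bona fide positive Borel measure: its density $\prod_{m\neq j}\big|p_\n^{(m)}\big|$ relative to $\mu_j$ is non-negative and vanishes only at finitely many points, so $\supp\sigma_\n^{(j)} = \supp\mu_j$ is an infinite set, and hence the monic orthogonal polynomial $P_{n_j}\big(\sigma_\n^{(j)}\big)$ exists and is unique.

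The only substantive observation is about signs. For each $m\neq j$, all zeros of $p_\n^{(m)}$ lie on $\Delta_m$, and the intervals $\Delta_1<\cdots<\Delta_d$ are pairwise disjoint; hence $p_\n^{(m)}$ does not vanish anywhere on $\Delta_j$ and keeps a constant sign there. Consequently there is a fixed $\varepsilon\in\{-1,1\}$, independent of $x$, such that
\[
\prod_{m\neq j}\big|p_\n^{(m)}(x)\big| = \varepsilon\prod_{m\neq j}p_\n^{(m)}(x), \qquad x\in\Delta_j .
\]

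Then, for every $l\in\{0,\ldots,n_j-1\}$,
\[
\int x^l p_\n^{(j)}(x)\,d\sigma_\n^{(j)}(x) = \varepsilon\int x^l p_\n^{(j)}(x)\prod_{m\neq j}p_\n^{(m)}(x)\,d\mu_j(x) = \varepsilon\int x^l P_\n(x)\,d\mu_j(x) = 0
\]
by the type~II orthogonality relations \eqref{1.2}. Thus $p_\n^{(j)}$ is a monic polynomial of degree $n_j$ orthogonal with respect to $\sigma_\n^{(j)}$ to all polynomials of lower degree, and by uniqueness this forces $p_\n^{(j)} = P_{n_j}\big(\sigma_\n^{(j)}\big)$. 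There is essentially no obstacle here beyond the one-line check that $\sigma_\n^{(j)}$ is a non-trivial positive measure; the rest is the bookkeeping in the displayed chain of equalities, which is precisely why the lemma is recorded as trivial.
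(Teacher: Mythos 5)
Your proof is correct and is exactly the routine verification the paper omits (the lemma is stated there with no proof, merely as ``trivially'' holding): the factorization from Lemma~\ref{lem:a4} gives monicity and degree $n_j$, the constant sign of $\prod_{m\neq j}p_\n^{(m)}$ on $\Delta_j$ converts the absolute value in $d\sigma_\n^{(j)}$ into a fixed factor $\varepsilon$, and the orthogonality then reduces to \eqref{1.2}. Nothing is missing.
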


For the proof of next lemma, see, e.g., \cite{NikishinSorokin}, p. 135, Proposition 3.4 and \cite{peralta}, Proposition 2.2.
\begin{lemma}
\label{li6}
$A_{\vec{n}}^{(j)}$ has $n_{j}-1$ simple zeros on $\Delta_j$.
\end{lemma}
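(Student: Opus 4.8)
The plan is to combine the non‑degeneracy of type~I forms for perfect systems with the classical sign‑counting (``Chebyshev'') argument, adapted to the disjoint‑interval structure of an Angelesco system. Fix \( j \) with \( n_j\ge 1 \) (the case \( n_j=0 \) being vacuous since then \( A_{\vec n}^{(j)}\equiv 0 \) by convention). The first step is to record that \( A_{\vec n}^{(j)}\not\equiv 0 \): since Angelesco systems are perfect (part~(A) of Theorem~\ref{me1}), \( \vec n \) is normal, and it is a standard property of perfect systems that normality of \( \vec n \) forces \( A_{\vec n}^{(j)}\not\equiv 0 \) (in fact \( \deg A_{\vec n}^{(j)}=n_j-1 \)) whenever \( n_j\ge 1 \). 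If one prefers to see this directly, note that \( A_{\vec n}^{(j)}\equiv 0 \) would make \( \big(A_{\vec n}^{(k)}\big)_{k\ne j} \) a type~I form for the Angelesco — hence perfect — subsystem \( (\mu_k)_{k\ne j} \) of index \( (n_k)_{k\ne j} \) that in addition satisfies the orthogonality relation at degree \( |\vec n|-2\ge|\vec n|-n_j-1 \); normality of that subsystem then forces the whole tuple to vanish, i.e. \( Q_{\vec n}\equiv 0 \), contradicting \eqref{n_2}. In particular \( A_{\vec n}^{(j)} \) has at most \( n_j-1 \) zeros.

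The heart of the argument is the reverse bound: \( A_{\vec n}^{(m)} \) changes sign at least \( n_m-1 \) times in the open interval \( \Delta_m^\circ \), for every \( m \). I would argue by contradiction. Suppose \( A_{\vec n}^{(j_0)} \) changes sign only at \( y_1<\cdots<y_s \) in \( \Delta_{j_0}^\circ \) with \( s\le n_{j_0}-2 \), and for \( k\ne j_0 \) let \( y_1^{(k)}<\cdots<y_{s_k}^{(k)} \) be the sign‑change points of \( A_{\vec n}^{(k)} \) in \( \Delta_k^\circ \), so \( s_k\le n_k-1 \). Set \( \widetilde r(x):=\prod_{i=1}^s(x-y_i)\prod_{k\ne j_0}\prod_{i=1}^{s_k}(x-y_i^{(k)}) \). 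Because \( \Delta_1,\ldots,\Delta_d \) are pairwise disjoint, on each \( \Delta_m \) the factors of \( \widetilde r \) located in \( \Delta_m \) exactly cancel the sign changes of \( A_{\vec n}^{(m)} \) there while all the other factors are non‑vanishing on \( \Delta_m \); hence \( A_{\vec n}^{(m)}\widetilde r \) has a constant sign \( \sigma_m\in\{\pm1\} \) on \( \Delta_m^\circ \). These signs need not agree, so I would correct them by multiplying \( \widetilde r \) by linear factors \( (x-a) \) with \( a \) in the (nonempty, open) gaps between consecutive segments: with \( \Delta_1<\cdots<\Delta_d \), a point \( a\in(\sup\Delta_i,\inf\Delta_{i+1}) \) flips \( \sigma_1,\ldots,\sigma_i \) and fixes \( \sigma_{i+1},\ldots,\sigma_d \). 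Since the flip patterns ``\( \{1,\ldots,i\} \)'', \( i=1,\ldots,d-1 \), span all subsets of \( \{1,\ldots,d-1\} \) over \( \mathbb Z/2 \), at most \( d-1 \) such factors suffice to produce a polynomial \( r\not\equiv 0 \) with \( A_{\vec n}^{(m)}(x)r(x)\ge 0 \) on \( \Delta_m \) for every \( m \), and
\[
\deg r\le s+\sum_{k\ne j_0}s_k+(d-1)\le(n_{j_0}-2)+\sum_{k\ne j_0}(n_k-1)+(d-1)=|\vec n|-2.
\]

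Finally I would feed \( r \) into the type~I orthogonality \eqref{1.1}: \( 0=\sum_{m=1}^d\int_{\Delta_m}A_{\vec n}^{(m)}(x)r(x)\,d\mu_m(x) \) is a sum of nonnegative terms, so every term vanishes; in particular \( \int_{\Delta_{j_0}}A_{\vec n}^{(j_0)}r\,d\mu_{j_0}=0 \) with nonnegative integrand forces \( A_{\vec n}^{(j_0)}r \) to vanish on the infinite set \( \supp\mu_{j_0} \), whence \( A_{\vec n}^{(j_0)}\equiv 0 \), contradicting the first step. This proves the sign‑change bound, and together with ``at most \( n_j-1 \) zeros'' it shows that \( A_{\vec n}^{(j)} \) has exactly \( n_j-1 \) zeros, all simple and all lying in \( \Delta_j^\circ \). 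The only delicate point is the bookkeeping in the middle step: the degree budget \( |\vec n|-2 \) is exactly tight, and it closes precisely because the deficiency hypothesis gives \( s\le n_{j_0}-2 \) and there are exactly \( d-1 \) gaps available to correct the \( d \) signs \( \sigma_m \) (the sign \( \sigma_d \) being untouchable but not needing correction). The non‑degeneracy input of the first step is the only other ingredient requiring justification.
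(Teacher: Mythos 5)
Your proof is correct. The paper does not prove Lemma~\ref{li6} itself but defers to \cite{NikishinSorokin} (Proposition 3.4, p.~135) and \cite{peralta} (Proposition 2.2), and your argument --- non-degeneracy of the type~I form via perfectness, plus the sign-change count closed by a degree-$(|\vec n|-2)$ polynomial built from the sign-change points and at most $d-1$ gap factors fed into \eqref{1.1} --- is essentially the standard argument given in those references, with the degree bookkeeping handled correctly (the only unaddressed corner case, components with $n_k=0$, is harmless since such components contribute nothing and need no sign correction).
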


Denote by \( \kappa_\n \) the product of the leading coefficients of the polynomials $A_{\vec{n}}^{(j)}$ and define
 \[
M_\n(z):=\kappa_\n^{-1}\prod_{j=1}^d A_\n^{(j)}(z) = z^{|\n|-d} + \mathcal O\big(z^{|\n|-d-1} \big) \quad \text{as} \quad z\to\infty.
\]

\begin{lemma}
\label{li7}
Given \( \n\in\N^d \), there exists a polynomial $D_\n(x)=\prod_{i=1}^{d-1}(x-\xi_{\n,i})$, where \( \xi_{\n,i}\in\{\alpha_i,\beta_i\} \), such that \begin{equation}
\label{sd_g6}
\sum_{j=1}^d \int_{\Delta_j} \left|A_{\vec{n}}^{(j)}D_\n M_\n\right| d\mu_j=1\,.
\end{equation}
\end{lemma}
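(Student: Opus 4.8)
The plan is to reduce the identity with absolute values to the normalization \eqref{n_2}. First I would note that for \emph{any} monic polynomial \( D_\n \) of degree \( d-1 \) the product \( D_\n M_\n \) is monic of degree \( |\n|-1 \), and since \( Q_\n=\sum_jA_\n^{(j)}\,d\mu_j \) is supported on \( \bigcup_j\Delta_j \), orthogonality \eqref{1.1} together with \eqref{n_2} gives
\[
\sum_{j=1}^d\int_{\Delta_j}A_\n^{(j)}(x)D_\n(x)M_\n(x)\,d\mu_j(x)=\int_{\mathbb R}D_\n(x)M_\n(x)Q_\n(x)=\int_{\mathbb R}x^{|\n|-1}Q_\n(x)=1 .
\]
Hence it suffices to choose the endpoints \( \xi_{\n,i}\in\{\alpha_i,\beta_i\} \) so that each integrand \( A_\n^{(j)}D_\n M_\n \) is \emph{nonnegative} on \( \Delta_j \); then the sum on the left coincides with \( \sum_j\int_{\Delta_j}\big|A_\n^{(j)}D_\n M_\n\big|\,d\mu_j \), which proves \eqref{sd_g6}.

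For the sign bookkeeping, observe that on \( \Delta_j \) one has \( A_\n^{(j)}M_\n=\kappa_\n^{-1}\big(A_\n^{(j)}\big)^2\prod_{m\neq j}A_\n^{(m)} \), and by Lemma~\ref{li6} every \( A_\n^{(m)} \) has all of its zeros inside \( \Delta_m \); since the intervals are pairwise disjoint, \( A_\n^{(m)} \) keeps a constant sign on \( \Delta_j \) for each \( m\neq j \). Thus \( A_\n^{(j)}M_\n \) has a fixed sign \( \epsilon_j\in\{+1,-1\} \) on \( \Delta_j \) off the zero set of \( A_\n^{(j)} \) (where \( D_\n M_\n \) vanishes as well, as \( A_\n^{(j)}\mid D_\n M_\n \)). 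Likewise, the zeros of \( D_\n \) are endpoints of \( \Delta_1,\dots,\Delta_{d-1} \), so \( D_\n \) has constant sign on every \( \Delta_j^\circ \); moreover, switching \( \xi_{\n,i} \) from \( \alpha_i \) to \( \beta_i \) does not alter the sign of \( x-\xi_{\n,i} \) on any \( \Delta_j \) with \( j\neq i \) (the whole interval lies to one side of \( \Delta_i \)), while it reverses it on \( \Delta_i^\circ \). Therefore, for \( j\in\{1,\dots,d-1\} \) we may pick \( \xi_{\n,j} \) so that \( \sgn\big(D_\n|_{\Delta_j}\big)=\epsilon_j \), making \( A_\n^{(j)}D_\n M_\n\ge0 \) on \( \Delta_j \) without disturbing the signs on the other intervals. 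For \( j=d \) there is no freedom left: \( D_\n>0 \) on \( \Delta_d \) (all its zeros lie to the left), and each \( A_\n^{(m)} \), \( m<d \), also has all zeros to the left of \( \Delta_d \), so its sign there equals \( \sgn(c_m) \), where \( c_m \) is the leading coefficient of \( A_\n^{(m)} \). Since \( \kappa_\n=\prod_{m=1}^dc_m \), this forces \( \epsilon_d=\sgn(c_d) \), so the construction works precisely when \( c_d>0 \).

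To prove \( c_d>0 \) I would argue in two steps. Comparing the coefficients of \( x^{n_d} \) in the \( d\mu_d \)-component of \eqref{1.5} (the only term of that degree on the right-hand side is \( a_{\n,d}A_{\n+\vec e_d}^{(d)} \)) yields \( c_d^{(\n)}=a_{\n,d}\,c_d^{(\n+\vec e_d)} \); since \( a_{\n,d}>0 \) by condition (B) of Theorem~\ref{me1} (already established), the sign of \( c_d^{(\n)} \) does not change when the last coordinate of \( \n \) varies, so we may assume \( n_d=1 \), in which case \( A_\n^{(d)}\equiv c_d \) is a constant. Pairing \( Q_\n \) with the monic polynomial \( P_{\n-\vec e_d} \) (of degree \( |\n|-1 \)), relations \eqref{1.1} and \eqref{n_2} give \( \int_{\mathbb R}P_{\n-\vec e_d}(x)Q_\n(x)=1 \); for \( j<d \) the polynomial \( P_{\n-\vec e_d} \) is \( \mu_j \)-orthogonal to all polynomials of degree \( \le n_j-1 \), hence to \( A_\n^{(j)} \), so those contributions vanish and \( c_d\int_{\Delta_d}P_{\n-\vec e_d}\,d\mu_d=1 \). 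Finally, the \( d \)-th coordinate of \( \n-\vec e_d \) is \( 0 \), so all \( |\n|-1 \) zeros of \( P_{\n-\vec e_d} \) lie in \( \Delta_1\cup\dots\cup\Delta_{d-1} \) by Lemma~\ref{lem:a4}; being monic, \( P_{\n-\vec e_d}>0 \) on \( \Delta_d \), so \( \int_{\Delta_d}P_{\n-\vec e_d}\,d\mu_d>0 \) and \( c_d=\big(\int_{\Delta_d}P_{\n-\vec e_d}\,d\mu_d\big)^{-1}>0 \). The genuinely delicate point of the whole argument is exactly this control of the sign on the rightmost interval \( \Delta_d \); the rest is routine sign counting.
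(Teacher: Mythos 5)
Your proof is correct and follows essentially the same route as the paper: reduce \eqref{sd_g6} to the normalization \eqref{n_2} via the monicity of \( D_\n M_\n \), and then use the \( d-1 \) independent binary choices \( \xi_{\n,i}\in\{\alpha_i,\beta_i\} \) to force every integrand \( A_\n^{(j)}D_\n M_\n \) to be nonnegative on \( \Delta_j \). The one place where you go beyond the paper is the claim that \( A_\n^{(d)}D_\n M_\n>0 \) on \( \Delta_d \), which the paper merely asserts as ``necessarily positive''; your argument for \( c_d>0 \) (the leading-coefficient recursion from \eqref{1.5} plus pairing \( Q_\n \) with \( P_{\n-\vec e_d} \)) is a correct justification of it, although it can also be obtained for free a posteriori: once all the signs are made to agree, the identity \( \sum_j\int_{\Delta_j}A_\n^{(j)}D_\n M_\n\,d\mu_j=1>0 \) forces the common sign to be \( +1 \).
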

\begin{proof}
Since \( D_\n M_\n \) is a monic polynomial of degree \( |\n|-1 \), we get from orthogonality conditions \eqref{1.1} and normalization \eqref{n_2} that
\[
\sum_{j=1}^d \int_{\Delta_j} A_\n^{(j)}D_\n M_\n d\mu_j = \int_\R D_\n M_\n Q_\n =1\,.
\]
Notice first that the polynomial $ A_\n^{(j)}D_\n M_\n$ does not change its sign on $\Delta_j$ for any choice of  \( \xi_{\n,i}\in\{\alpha_i,\beta_i\} \), \( i\in\{1,\ldots,d-1\} \). To prove the lemma, choose \( \xi_{\n,i}\in\{\alpha_i,\beta_i\} \), starting with \( i=d-1 \) and continuing down to \( i=1 \), so that \( A_\n^{(j)}D_\n M_\n \) has the same sign on \( \Delta_j \) as \( A_\n^{(d)}D_\n M_\n \) has on \( \Delta_d \) (the latter is necessarily positive).
\end{proof}

The next lemma follows from the proof of theorem 5 in \cite{peralta} (see also \cite{w1}).
\begin{lemma}
\label{li8}
The zeros of $A_{\vec{n}}^{(j)}$ and $A_{\vec{n}+\vec{e}_l}^{(j)}$ interlace for any \( l\in\{1,\ldots,d \} \).
\end{lemma}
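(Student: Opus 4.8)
The plan is to mimic the proof of Lemma~\ref{lem:a4}, but working with the linear forms $\{Q_\n\}$ rather than with the polynomials $\{P_\n\}$. Fix $j,l\in\{1,\ldots,d\}$ and put $\vec m:=\n+\vec e_l$, so that $m_l-1=n_l$ and $m_i-1=n_i-1$ for $i\ne l$. By Lemma~\ref{li6}, both $A_\n^{(j)}$ and $A_{\vec m}^{(j)}$ have all of their zeros simple and lying in the open interval $\Delta_j^\circ$, with $n_j-1$ of them for $A_\n^{(j)}$ and $m_j-1\in\{n_j-1,n_j\}$ of them for $A_{\vec m}^{(j)}$; in particular $\deg A_{\vec m}^{(l)}=m_l-1=n_l$, while $\deg A_\n^{(l)}\le n_l-1$. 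For real $\mu,\lambda$ with $|\mu|+|\lambda|>0$ consider the linear form $F_{\mu,\lambda}:=\mu Q_\n+\lambda Q_{\vec m}=\sum_{i=1}^d B_{\mu,\lambda}^{(i)}\,\dd\mu_i$ with coefficient polynomials $B_{\mu,\lambda}^{(i)}:=\mu A_\n^{(i)}+\lambda A_{\vec m}^{(i)}$, so $\deg B_{\mu,\lambda}^{(i)}\le n_i-1$ for $i\ne l$ and $\deg B_{\mu,\lambda}^{(l)}\le n_l$. First I would note that $F_{\mu,\lambda}\not\equiv0$: comparing the $l$-th coefficient polynomials and using the degree discrepancy just recorded, the identity $F_{\mu,\lambda}\equiv0$ forces $\lambda=0$ and then $\mu=0$.

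Next I would record the two sign-change estimates for $F_{\mu,\lambda}$ on $\bigcup_i\Delta_i^\circ$. Since $F_{\mu,\lambda}$ annihilates $1,x,\ldots,x^{|\n|-2}$ by \eqref{1.1}, and the intervals $\Delta_1<\cdots<\Delta_d$ are separated by $d-1$ gaps, the standard argument gives a lower bound: if $F_{\mu,\lambda}$ had only $s$ sign changes on $\bigcup_i\Delta_i^\circ$, one multiplies it by a real polynomial vanishing at these $s$ points together with one suitably chosen point in each of the gaps that is needed to equalize signs (at most $d-1$ such points), obtaining a polynomial of degree at most $s+(d-1)$ whose product with $F_{\mu,\lambda}$ is of one sign and not identically zero on $\supp\vec\mu$; orthogonality then forces $s+(d-1)\ge|\n|-1$, i.e. $s\ge|\n|-d$. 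On the other hand, the number of sign changes of $F_{\mu,\lambda}$ on each $\Delta_i^\circ$ does not exceed the number of sign changes of $B_{\mu,\lambda}^{(i)}$ there, hence is at most $\deg B_{\mu,\lambda}^{(i)}$; summing gives the upper bound $|\n|-d+1$. Thus $F_{\mu,\lambda}$ may lose at most one sign change relative to the maximum permitted by the degrees of its components.

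The heart of the matter, then, is the following: \emph{if the zeros of $A_\n^{(j)}$ and $A_{\vec m}^{(j)}$ in $\Delta_j^\circ$ do not interlace, then for some real $(\mu,\lambda)\neq(0,0)$ the polynomial $B_{\mu,\lambda}^{(j)}$ has a zero of multiplicity $\ge2$ located in $\Delta_j^\circ$.} Granting this, $B_{\mu,\lambda}^{(j)}$ would have at most $\deg B_{\mu,\lambda}^{(j)}-2$ sign changes on $\Delta_j^\circ$, so $F_{\mu,\lambda}$ would have at most $|\n|-d-1$ sign changes, contradicting the lower bound $|\n|-d$ from the previous paragraph; this forces interlacing and completes the proof (the case $l=j$ is handled verbatim, now with $\deg B_{\mu,\lambda}^{(j)}\le n_j$, the single extra degree of freedom again absorbed by the same count). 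The passage from ``non-interlacing'' to ``a real combination with a double zero inside $\Delta_j^\circ$'' is immediate when $A_\n^{(j)}$ and $A_{\vec m}^{(j)}$ share a zero $x_*\in\Delta_j^\circ$ — take $(\mu,\lambda)=\big((A_{\vec m}^{(j)})'(x_*),-(A_\n^{(j)})'(x_*)\big)\ne(0,0)$ — and in the remaining cases it follows by tracking the (necessarily real, since no common zeros) zeros of $B_t^{(j)}=A_\n^{(j)}+tA_{\vec m}^{(j)}$ as $t$ ranges over $\R$: a zero of $B_t^{(j)}$ can leave $\Delta_j^\circ$ only through an endpoint or by colliding with another real zero, and no zero can cross an endpoint because $B_t^{(j)}$ takes a $t$-independent nonzero value there, so any violation of the interlacing order forces a collision, hence a double zero, inside $\Delta_j^\circ$. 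Localizing this collision to $\Delta_j^\circ$ (rather than outside $\Delta_j$) is the delicate step; it is precisely the Wronskian/continuity argument in the proof of Theorem~5 of \cite{peralta} (see also \cite{w1}), which I would adapt with the only change being that the ambient interval of orthogonality is $\Delta_j$ instead of the whole line. I expect this localization to be the main obstacle; everything else is the bookkeeping of sign changes across the union of intervals, entirely parallel to Lemma~\ref{lem:a4}.
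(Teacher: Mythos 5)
The paper does not actually prove this lemma --- it only points to the proof of Theorem~5 in \cite{peralta} (see also \cite{w1}) --- so what you have written is a reconstruction of that cited argument rather than a comparison with a proof in the text. Your central counting step is correct and is indeed the right engine: \( F_{\mu,\lambda}=\mu Q_\n+\lambda Q_{\n+\vec e_l} \) is orthogonal to \( 1,\ldots,x^{|\n|-2} \), hence has at least \( |\n|-d \) sign changes on \( \bigcup_i\Delta_i^\circ \), while the degrees of its components allow at most \( |\n|-d+1 \); a zero of multiplicity \( \ge 2 \) of \( B_{\mu,\lambda}^{(j)} \) inside \( \Delta_j^\circ \) would cost two units and produce a contradiction. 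Two points in your write-up, however, do not survive scrutiny. First, the assertion that ``\( B_t^{(j)} \) takes a \( t \)-independent nonzero value'' at the endpoints of \( \Delta_j \) is false: \( B_t^{(j)}(\alpha_j)=A_\n^{(j)}(\alpha_j)+tA_{\n+\vec e_l}^{(j)}(\alpha_j) \) depends on \( t \) and vanishes for one value of \( t \), so the zero-tracking argument as stated does not prevent a zero from escaping through an endpoint. The fix is to drop the homotopy altogether and argue via the Wronskian \( W=A_\n^{(j)}\big(A_{\n+\vec e_l}^{(j)}\big)'-\big(A_\n^{(j)}\big)'A_{\n+\vec e_l}^{(j)} \): if the zeros failed to interlace there would be two consecutive zeros \( x_1<x_2 \) of one polynomial with the other of constant sign on \( [x_1,x_2] \), forcing \( W \) to vanish at some \( x_0\in(x_1,x_2) \), and since all zeros lie in \( \Delta_j^\circ \) by Lemma~\ref{li6}, the point \( x_0 \) is automatically in \( \Delta_j^\circ \); the combination with coefficients \( \big(A_{\n+\vec e_l}^{(j)}(x_0),-A_\n^{(j)}(x_0)\big) \) (or the derivative version at a common zero) then has the forbidden double zero. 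The ``localization'' you flag as the delicate step is therefore automatic and needs no adaptation of \cite{peralta}.

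Second, there is a genuinely missing case: your contradiction presumes \( B_{\mu,\lambda}^{(j)}\not\equiv 0 \), but when \( l\neq j \) the polynomials \( A_\n^{(j)} \) and \( A_{\n+\vec e_l}^{(j)} \) have the same degree \( n_j-1 \) and could a priori be proportional, in which case the distinguished combination kills the \( j \)-th component entirely and your count gives only \( \le |\n|-n_j-d+2 \) sign changes --- not below \( |\n|-d \) when \( n_j=2 \). To close this, observe that if \( B_{\mu,\lambda}^{(j)}\equiv 0 \) then \( F_{\mu,\lambda} \) is supported on only \( d-1 \) of the intervals, so the gap-filling polynomial in your lower-bound argument needs only \( d-2 \) extra zeros and the lower bound improves to \( |\n|-d+1 \), which contradicts \( |\n|-n_j-d+2 \) for every \( n_j\ge 2 \) (and the case \( n_j=1 \) is vacuous). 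With these two repairs the argument is complete and is, in substance, the one the paper imports from \cite{peralta} and \cite{w1}, run in parallel with the proof of Lemma~\ref{lem:a4}.
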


\medskip

\begin{proof}[Proof of theorem~\ref{me1}: condition (C)] {\bf The first bound in (C) from \eqref{1.9}.} By \eqref{loi} and lemma~\ref{loi1}, we get that
\[
a_{\vec{n},j} \le  \left(\frac{|\Delta_{\max}|}{g_{\min}}\right)^{d-1}   \frac{\displaystyle \int_{\mathbb{R}} \big(p_\n^{(j)}\big)^2d\sigma_\n^{(j)}}{\displaystyle \int_{\mathbb{R}} \big(p_{\n-\vec{e}_j}^{(j)}\big)^2d\sigma_\n^{(j)}}\,.
\]
Then, it follows from lemmas \ref{li5}, \ref{li4}, and \ref{li3}, that
\[
\frac{\displaystyle \int_{\mathbb{R}} \big(p_\n^{(j)}\big)^2d\sigma_\n^{(j)}}{\displaystyle \int_{\mathbb{R}} \big(p_{\n-\vec{e}_j}^{(j)}\big)^2d\sigma_\n^{(j)}} = 
\frac{\displaystyle \int_{\mathbb{R}} \big(P_{n_j}\big(\sigma_\n^{(j)}\big)\big)^2d\sigma_\n^{(j)}}{\displaystyle \int_{\mathbb{R}} \big(p_{\n-\vec{e}_j}^{(j)}\big)^2d\sigma_\n^{(j)}} \le
\left(\frac{|\Delta_j|}{2}\right)^2 \frac{\displaystyle \int_{\mathbb{R}} \big(P_{n_j-1}\big(\sigma_\n^{(j)}\big)\big)^2d\sigma_\n^{(j)}}{\displaystyle \int_{\mathbb{R}} \big(p_{\n-\vec{e}_j}^{(j)}\big)^2d\sigma_\n^{(j)}} \leq \left(\frac{|\Delta_j|}{2}\right)^2.
\]
Thus,
\[
\sup_{\n\in \mathbb{N}^d} a_{\n,j}\le \left(\frac{|\Delta_{\max}|}{g_{\min}}\right)^{d-1} \left(\frac{|\Delta_j|}{2}\right)^2\,.
\]

\smallskip

\noindent
{\bf The second bound in (C) from \eqref{1.9}.} It follows from \eqref{por} that
\[
b_{\n,j} = Y_{\n+\vec e_j} - Y_\n, \quad Y_{\vec{n}}:= \int_{\mathbb{R}} x^{|\vec{n}|}Q_\n(x).
\]
Put
\[
\eta_\n := \int_{\mathbb{R}} x(M_\n D_\n)(x)Q_\n(x) =\sum_{j=1}^d  \int_{\Delta_j}x (A_\n^{(j)}D_\n M_\n)(x)d\mu_j(x)\,.
\]
Orthogonality conditions \eqref{1.1} and normalization \eqref{n_2} yield that 
\[
\eta_\n = Y_{\vec{n}}+C_\n + \widetilde C_\n\,,
\]
where $C_\n$ is defined by $M_\n(x)=x^{|\vec{n}|-d}+C_\n x^{|\vec{n}|-d-1}+\cdots$ and  $\widetilde C_\n$ is defined by $D_\n(x)=x^{d-1}+\widetilde C_\n x^{d-2}+\cdots$. It follows from \eqref{sd_g6} that
\[
\sup_{\n\in \mathbb{Z}_+^d} |\eta_{\vec{n}}|\le \sup_{x\in \Delta_{\max}}|x|.
\]
Furthermore, since each \( \xi_{\n,i}\in\{\alpha_i,\beta_i\} \), \( i\in\{ 1,\ldots,d-1 \} \), we have that
\[
\big |\widetilde C_{\n+\vec e_j} - \widetilde C_\n\big|=\left |\sum_{i=1}^{d-1}\xi_{\n+\vec e_j,i} - \sum_{i=1}^{d-1}\xi_{\n,i}\right| \leq \sum_{i=1}^{d-1} \big|\xi_{\n+\vec e_j,i} - \xi_{\n,i} \big| \leq |\Delta_{\max}|.
\]
Finally, if we denote the zeros of \( M_\n \) by $\{x_{\n,i}\}_{i=1}^{|\n|-d}$ in the increasing order, it holds that 
\[
\big|C_{\n+\vec e_j}-C_\n\big| = \left| \sum_{i=1}^{|\n|-d+1} x_{\n+\vec e_j,i} - \sum_{i=1}^{|\n|-d} x_{\n,i} \right| \leq |\Delta_{\max}| + \sup_{x\in \Delta_{\max}}|x|
\]
by lemmas~\ref{li6} and \ref{li8}. Since \( |\Delta_{\max}| \leq 2 \sup_{x\in \Delta_{\max}}|x| \), we have that
\[
\sup_{\n\in \N^d,j\in \{1,\ldots,d\}} |b_{\n,j}|\le 7\sup_{x\in \Delta_{\max}}|x|. \qedhere
\]
\end{proof}

\begin{remark}
The arguments we have given above imply that
\[
\sup \limits_{\n\in \Z_+^d,j\in \{1,\ldots,d\}}a_{\n,j}<\infty\,,\,  \sup \limits_{\n \in \mathbb{Z}_+^d,j\in \{1,\ldots,d\}}|b_{\vec{n},j}|<\infty \,,
\]
that is, we can replace \( \N \) with \( \Z_+ \) in \eqref{1.9}. Indeed, consider all $\{a_{\vec{n}, l}\}$ and    $\{b_{\vec{n}, l}\}$ for which at least one coordinate in $\vec{n}$ is zero. Among them, we first take those $\vec{n}$ for which exactly one component, say $n_j$, in $\vec{n}$ is equal to zero and $l=j$. For that family, the boundedness of $\{b_{\vec{n},l}\}$ has been proven in the above theorem and recall that $a_{\vec{n},l}=0$ for such indices.
  To prove uniform estimate for other coefficients, we can argue by induction in $d$. Indeed, for $d=2$, the recurrence coefficients evaluated on the margins are uniformly bounded because they are recurrence coefficients of one-dimensional Jacobi matrices with compactly supported measures of orthogonality. For general $d$, we notice that the polynomials of first/second type with indices on the margin are in fact the polynomials of the first/second type with respect to $d-1$ orthogonality measures in Angelesco system and we can argue by induction. 
\end{remark}

\begin{remark}
We want to give another proof of the uniform estimate of $b_{\vec{n},j}$. This argument is taken from \cite{ApKalLLRocha06}. Divide recursion \eqref{1.7} by $xP_\n(x)$ and integrate over the contour $\Gamma$ which encircles $\{0\}\cup_{j=1}^d \Delta_j$ to get
\[
\frac{1}{2\pi\ic}\int_{\Gamma}\left( 1-\frac{P_{\n+\vec{e}_j}(z)}{z P_\n(z)}  \right)dz=b_{\vec{n},j}+\frac{1}{2\pi \ic}\int_{\Gamma}\sum_{l=1}^d a_{\vec{n},l}\frac{P_{\n-\vec{e}_l}(z)}{zP_\n(z)}dz\,.
\]
The last term is zero by residue calculus at infinity. Using the interlacing property of zeros, we can write
\[
|b_{\vec{n},j}|\le \frac{1}{2\pi }\int_{\Gamma}\left| \frac{P_{\n+\vec{e}_j}(z)}{z P_\n(z)}  \right||dz|\le   C_{\Delta_{\max}}\,,
\]
where one needs to use a variation of \eqref{sd_kl}.
\end{remark}

The next lemma shows that the coefficients $\{b_{\vec{n},j}\}$ in fact are monotonic in $j$. 

\begin{lemma}\label{sd_monot} 
For all \( \vec n\in\mathbb Z_+^d \) and any \( j<k \), \( j,k\in\{1,\ldots,d\} \) it holds that
\[
b_{\vec n,j} < b_{\vec n,k}.
\]
\end{lemma}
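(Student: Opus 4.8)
The plan is to reduce the monotonicity to the domination of zeros already contained in Lemma~\ref{lem:a4}. First I would record the algebraic identity behind \eqref{cc2}: writing the recurrence \eqref{1.7} at the same index \( \vec n \) for the two directions \( j \) and \( k \) and subtracting, the terms \( a_{\vec n,l}P_{\vec n-\vec e_l} \) cancel and one is left with
\[
P_{\vec n+\vec e_k}(z) - P_{\vec n+\vec e_j}(z) = \big(b_{\vec n,j} - b_{\vec n,k}\big)P_{\vec n}(z),
\]
valid for every \( \vec n\in\mathbb Z_+^d \); here I use that an Angelesco system is perfect (Theorem~\ref{me1}), so that \( P_{\vec n} \) is monic of degree \( |\vec n| \), and \( P_{\vec n+\vec e_j},P_{\vec n+\vec e_k} \) are monic of degree \( |\vec n|+1 \). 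Hence the left-hand side has degree at most \( |\vec n| \), and comparing the coefficients of \( z^{|\vec n|} \) on both sides gives
\[
b_{\vec n,j} - b_{\vec n,k} = \sum_{i=1}^{|\vec n|+1}\big(x_{\vec n+\vec e_j,i} - x_{\vec n+\vec e_k,i}\big),
\]
where \( \{x_{\vec m,i}\}_i \) denotes the zeros of \( P_{\vec m} \) listed in increasing order (recall that in a monic polynomial the coefficient of the next-to-leading power equals minus the sum of the roots).

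Next I would invoke the last part of Lemma~\ref{lem:a4}: for \( j<k \) the zeros of \( P_{\vec n+\vec e_k} \) strictly dominate those of \( P_{\vec n+\vec e_j} \), i.e. \( x_{\vec n+\vec e_j,i} < x_{\vec n+\vec e_k,i} \) for every \( i\in\{1,\ldots,|\vec n|+1\} \). Therefore every summand above is strictly negative and \( b_{\vec n,j} - b_{\vec n,k} < 0 \), which is the assertion. As a sanity check, the case \( \vec n=\vec 0 \) fits in without change: there \( P_{\vec e_j}(x) = x - b_{\vec 0,j} \) and \( P_{\vec e_k}(x) = x - b_{\vec 0,k} \), the single zero of \( P_{\vec e_j} \) lies on \( \Delta_j \), that of \( P_{\vec e_k} \) lies on \( \Delta_k \), and \( \Delta_j<\Delta_k \).

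There is essentially no obstacle beyond Lemma~\ref{lem:a4}: the whole content of the monotonicity in \( j \) is precisely the domination of the zeros of \( P_{\vec n+\vec e_j} \) by those of \( P_{\vec n+\vec e_k} \), and the rest is bookkeeping with leading coefficients. If one prefers to avoid comparing coefficients, an equivalent route is to let \( z\to+\infty \) along the real axis in \( \big(P_{\vec n+\vec e_k}(z)-P_{\vec n+\vec e_j}(z)\big)/P_{\vec n}(z) \) using the factored forms: for \( z \) beyond all the zeros each factor of \( P_{\vec n+\vec e_j} \) exceeds the corresponding factor of \( P_{\vec n+\vec e_k} \), so the quotient is negative, and its limit equals \( b_{\vec n,j}-b_{\vec n,k} \); strictness again comes from the strict interlacing of Lemma~\ref{lem:a4}.
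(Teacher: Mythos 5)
Your proof is correct and follows essentially the same route as the paper: subtract the two nearest-neighbor recurrences at index \( \vec n \) to obtain \( (b_{\vec n,j}-b_{\vec n,k})P_{\vec n} = P_{\vec n+\vec e_k}-P_{\vec n+\vec e_j} \), identify \( b_{\vec n,j}-b_{\vec n,k} \) with the difference of the sums of the zeros via the subleading coefficient, and conclude from the zero-domination statement in Lemma~\ref{lem:a4}. The only difference is cosmetic (the limit-at-infinity variant and the \( \vec n=\vec 0 \) sanity check), so there is nothing to add.
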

\begin{proof}
It follows from the recurrence relations that
\[
\big(b_{\vec n,j} - b_{\vec n,k}\big)P_{\vec n}(x) = P_{\vec n + \vec e_k}(x) -  P_{\vec n + \vec e_j}(x).
\]
Since \( P_{\vec n}(x) \) is a monic polynomial and the second coefficient of any monic polynomial is minus the sum of its zeros, we have that
\[
b_{\vec n,j} - b_{\vec n,k} = \sum_{i=1}^{|\vec n|+1} \big( x_{\vec n+\vec e_j,i}-x_{\vec n+\vec e_k,i}\big),
\]
where \( \{x_{\vec n+\vec e_m,i}\} \) are the zeros of \( P_{\vec n +\vec e_m} \) labeled in the increasing order. The claim now follows from the second claim in lemma~\ref{lem:a4}.
\end{proof}



 \begin{lemma} 
 Suppose $\vec{\mu}$ defines an Angelesco system and  $\Delta_{\max}\subset [-R,R]$. Then
\begin{equation}\label{sd_f2}
|L_n(z)|\le  (|z|-R)^{-|\vec{n}|}, \quad |z|>R\,.
\end{equation}
\end{lemma}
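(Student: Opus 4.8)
The plan is to reduce the bound to Lemma~\ref{li7}, whose function is precisely to replace the \emph{signed} normalization \eqref{n_2} of the form $Q_\n$ by an \emph{absolute} one. Fix $\n\in\N^d$. By Lemma~\ref{li6} each polynomial $A_\n^{(j)}$ has degree exactly $n_j-1$ with all its zeros lying on $\Delta_j$, so that $M_\n=\kappa_\n^{-1}\prod_{j=1}^dA_\n^{(j)}$ is a monic polynomial of degree $|\n|-d$ whose zeros all lie in $\Delta_{\max}$. Multiplying by the monic polynomial $D_\n(x)=\prod_{i=1}^{d-1}(x-\xi_{\n,i})$, $\xi_{\n,i}\in\{\alpha_i,\beta_i\}$, furnished by Lemma~\ref{li7}, I obtain a monic polynomial $T_\n:=M_\n D_\n$ of degree $|\n|-1$ whose zeros all lie in $\Delta_{\max}\subseteq[-R,R]$ and which, by \eqref{sd_g6}, satisfies $\sum_{j=1}^d\int_{\Delta_j}\big|A_\n^{(j)}T_\n\big|\,d\mu_j=1$.

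First I would establish a reproducing-type identity: for $|z|>R$, so that $T_\n(z)\neq0$, write
\[
\frac1{z-x}=\frac{T_\n(x)}{T_\n(z)(z-x)}+\frac1{T_\n(z)}\cdot\frac{T_\n(z)-T_\n(x)}{z-x}\,,
\]
where the last fraction, as a function of $x$, is a polynomial of degree at most $|\n|-2$, hence annihilated by integration against $Q_\n$ in view of the orthogonality relations \eqref{1.1}. Recalling \eqref{Ln} and \eqref{sad1}, integrating the displayed identity against $Q_\n$ therefore gives
\[
L_\n(z)=\frac1{T_\n(z)}\int\frac{T_\n(x)\,Q_\n(x)}{z-x}=\frac1{T_\n(z)}\sum_{j=1}^d\int_{\Delta_j}\frac{\big(A_\n^{(j)}T_\n\big)(x)}{z-x}\,d\mu_j(x)\,.
\]
Letting $z\to\infty$ here and using $\int T_\n Q_\n=1$ recovers $L_\n(z)=z^{-|\n|}+\mathcal O\big(z^{-|\n|-1}\big)$, in agreement with \eqref{LnInfty}, which is a convenient consistency check.

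To finish I would estimate crudely: for $x\in\Delta_j\subseteq[-R,R]$ and $|z|>R$ one has $|z-x|\ge|z|-R$, so pulling this factor out of each integral and invoking \eqref{sd_g6},
\[
|L_\n(z)|\le\frac1{|T_\n(z)|}\cdot\frac1{|z|-R}\sum_{j=1}^d\int_{\Delta_j}\big|\big(A_\n^{(j)}T_\n\big)(x)\big|\,d\mu_j(x)=\frac1{|T_\n(z)|\,(|z|-R)}\,.
\]
Since $T_\n$ is monic of degree $|\n|-1$ with all zeros in $[-R,R]$, for $|z|>R$ we have $|T_\n(z)|=\prod_{\zeta}|z-\zeta|\ge(|z|-R)^{|\n|-1}$, and combining the last two displays yields $|L_\n(z)|\le(|z|-R)^{-|\n|}$, as claimed. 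The only genuine idea here is the appeal to Lemma~\ref{li7}: the naive choice $T_\n(x)=x^{|\n|-1}$ only gives $L_\n(z)=z^{-(|\n|-1)}\int x^{|\n|-1}Q_\n(x)/(z-x)$, and since $Q_\n$ is a \emph{signed} measure the normalization $\int x^{|\n|-1}Q_\n=1$ gives no control of $\sum_j\int|x|^{|\n|-1}|A_\n^{(j)}|\,d\mu_j$; Lemma~\ref{li7} is exactly the device that supplies a monic polynomial of the right degree against which the summed total variation of the measures $A_\n^{(j)}d\mu_j$ equals $1$, with the sign changes of the $A_\n^{(j)}$ precisely cancelled.
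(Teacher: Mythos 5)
Your proof is correct and follows essentially the same route as the paper's: both decompose the Cauchy kernel against the monic polynomial $O_\n=M_\n D_\n$ of degree $|\n|-1$ supplied by Lemma~\ref{li7}, kill the polynomial part of degree $|\n|-2$ by the orthogonality \eqref{1.1}, and then combine the normalization \eqref{sd_g6} with $|z-x|\ge|z|-R$ and $|O_\n(z)|\ge(|z|-R)^{|\n|-1}$. The paper leaves the last lower bound implicit; you make it explicit, which is the only (cosmetic) difference.
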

\begin{proof}
Recall that $A_\n^{(j)}$ has $n_j-1$ simple zeros on $\Delta_j$. Let us abbreviate  \( O_\n=M_\n D_\n \), see lemma~\ref{li7}, and write
\[
\int_{\mathbb{R}} \frac{Q_\n(x)}{x-z}=\frac{1}{O_\n(z)}\int_{\mathbb{R}}
\frac{(O_\n(z)-O_\n(x))Q_\n(x)}{x-z}+\frac{1}{O_\n(z)}\int_{\mathbb{R}}
\frac{O_\n(x)Q_\n(x)}{x-z}\,.
\]
Since $(O_\n(z)-O_\n(x))/(z-x)$ is a polynomial in $x$ of degree $|\vec{n}|-2$, the first summand of the left-hand side of the equality above is zero. For the second one, we can write
\begin{multline*}
\left|\frac{1}{O_\n(z)}\int_{\mathbb{R}} \frac{O_\n(x)Q_\n(x)}{x-z}\right|=\left|\frac{1}{O_\n(z)} \int_{\mathbb{R}}
\frac{\sum_{j=1}^d \int_{\Delta_j} (A_\n^{(j)}D_\n M_\n)(x) d\mu_j(x)}{x-z}\right| \le \\
\frac{1}{|O_\n(z)|} \int_{\mathbb{R}} \frac{\sum_{j=1}^d \int_{\Delta_j} \big|(A_\n^{(j)}D_\n M_\n)(x)\big|d\mu_j(x)  }{|x-z|} \le
\frac{1}{(|z|-R)^{|\vec{n}|}},\, \quad |z|>R,
\end{multline*}
due to \eqref{sd_g6}.
\end{proof}

\section{}

\label{ApB}

\subsection{Strong asymptotics of MOPs}

Let $\big(\widehat\mu_1,\ldots,\widehat\mu_d\big)$ be a vector of Markov functions of the measures \( \mu_i \), that is,
\[
\widehat\mu_i(z) := \int\frac{d\mu_i(x)}{z-x} = \frac1{2\pi\ic}\int_{\Delta_i}\frac{\rho_i(x)}{x-z}\:d x, \qquad z\in\overline{\mathbb{C}}\setminus \Delta_i.
\]
The above definition explains the somewhat perplexing normalization in \eqref{weights} as \( (\widehat\mu_{i+} - \widehat\mu_{i-})(x) = \rho_i(x) \), \( x\in \Delta_i^\circ \), by Sokhotski-Plemelj formulae. Further, let linearized error function \( R_\n^{(i)} \) be given by \eqref{Rnj} and polynomials \( P_\n^{(i)} \) be given by \eqref{Pnj}.

\begin{theorem}
\label{cor:typeIIb}
Under the conditions of theorem~\ref{thm:typeII}, it holds for all \( \n\in\mathcal N_\vc \) that
\[
\left( \widehat\mu_i - \frac{P_\n^{(i)}}{P_\n}\right)(z) = \frac{R_\n^{(i)}(z)}{P_\n(z)} = \frac{1 + \mathcal O\big(|\n|^{-1}\big)}{w_{\n,i}(z)}\frac{\big(S_\n\Phi_\n\big)^{(i)}(z)}{\big(S_\n\Phi_\n\big)^{(0)}(z)},
\]
\( i\in\{1,\ldots,d\} \), uniformly on closed subsets of \( \overline{\mathbb{C}} \setminus \bigcup_{i=1}^d \Delta_{\vc,i} \).
\end{theorem}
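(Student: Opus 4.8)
The first equality in the statement is immediate from \eqref{Rnj}, \eqref{Pnj} and the definition of \( \widehat\mu_i \), so the real content is the asymptotic formula for \( R_\n^{(i)}/P_\n \). The plan is to deduce it directly from the strong asymptotics of \( P_\n \) already recorded in Theorem~\ref{thm:typeII}, by feeding the boundary asymptotics of \( P_\n \) on \( \Delta_{\n,i}^\circ \) into the Cauchy-transform representation of \( R_\n^{(i)} \) and then recognizing the resulting integrand through the jump relations that define the model functions \( S_\n^{(k)},\Phi_\n^{(k)} \). This is the hand-written version of reading off the \((1,i+1)\)-entry of the solution of the matrix Riemann--Hilbert problem for MOPs, whose \((1,1)\)-entry produces Theorem~\ref{thm:typeII}; if one prefers the industrial route, one simply runs the same Deift--Zhou steepest descent (normalization by the \( g \)-functions, lens opening around each \( \Delta_{\n,i} \) using analyticity and non-vanishing of \( \rho_i \), matching against the global parametrix built from \( S_\n^{(k)},\Phi_\n^{(k)} \) and Bessel local parametrices at \( \alpha_{\n,i},\beta_{\n,i} \), ending with the small-norm problem \( R=I+\mathcal O(|\n|^{-1}) \)) and reads off the appropriate matrix entry.

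First I would record, using \eqref{weights}, that \( R_\n^{(i)}(z)=\frac1{2\pi\ic}\int_{\Delta_i}\frac{\rho_i(x)P_\n(x)}{x-z}\,\dd x \); this is holomorphic in \( \overline{\mathbb C}\setminus\Delta_i \) and, by orthogonality, decays as \( \mathcal O(z^{-n_i-1}) \) at infinity. On compact subsets of \( \Delta_{\n,i}^\circ \) I would substitute the second line of Theorem~\ref{thm:typeII}, namely \( P_\n(x)=(1+\mathcal O(|\n|^{-1}))\gamma_\n\big[(S_\n\Phi_\n)_+^{(0)}+(S_\n\Phi_\n)_-^{(0)}\big](x) \). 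The algebraic heart of the matter is then the identity that \( \rho_i\big[(S_\n\Phi_\n)_+^{(0)}+(S_\n\Phi_\n)_-^{(0)}\big] \) equals the jump of \( (S_\n\Phi_\n)^{(i)}/w_{\n,i} \) across \( \Delta_{\n,i}^\circ \): this follows by combining the boundary relation \( S_{\n\pm}^{(i)}=S_{\n\mp}^{(0)}\big(\rho_iw_{\n,i+}\big) \) from the proposition on the Szeg\H o-type functions, the crosswise identification \( \Phi_{\n\pm}^{(0)}=\Phi_{\n\mp}^{(i)} \) on \( \Delta_{\n,i} \) that is built into the construction of \( \RS_\n \), and \( w_{\n,i-}=-w_{\n,i+} \) on \( \Delta_{\n,i}^\circ \).

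Since the divisor of \( \Phi_\n \) places a zero of order \( n_i \) at \( \infty^{(i)} \) and \( w_{\n,i}(z)\sim z \) at infinity, the function \( (S_\n\Phi_\n)^{(i)}/w_{\n,i} \) is holomorphic in \( \overline{\mathbb C}\setminus\Delta_{\n,i} \), has no poles there, and decays as \( \mathcal O(z^{-n_i-1}) \); hence it is reconstructed exactly by the Cauchy integral of its own jump. Comparing with the integral representation of \( R_\n^{(i)} \) above, this gives \( R_\n^{(i)}(z)=\big(1+\mathcal O(|\n|^{-1})\big)\gamma_\n (S_\n\Phi_\n)^{(i)}(z)/w_{\n,i}(z) \), and dividing by the first line of Theorem~\ref{thm:typeII} cancels the constant \( \gamma_\n \) and produces the asserted formula. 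Uniformity on closed subsets of \( \overline{\mathbb C}\setminus\bigcup_i\Delta_{\vc,i} \) follows from the uniformity built into Theorem~\ref{thm:typeII}, the convergence \( \alpha_{\n,i}\to\alpha_{\vc,i} \), \( \beta_{\n,i}\to\beta_{\vc,i} \) (so a fixed such closed set avoids \( \bigcup_i\Delta_{\n,i} \) for all large \( \n \)), and the continuity in \( \vc \) of \( S_\vc^{(k)} \) and \( \chi_\vc \).

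The step I expect to be the main obstacle is the error control \emph{near the endpoints} \( \alpha_{\n,i},\beta_{\n,i} \) and, when the pushing effect occurs, \emph{on the pushed-out part} \( \Delta_i\setminus\Delta_{\n,i} \), which is not governed by the two-term asymptotics used above yet still carries mass of \( \mu_i \). One must show that both contributions to the Cauchy integral are negligible relative to the leading term \( \gamma_\n(S_\n\Phi_\n)^{(i)}(z) \), uniformly for \( z \) in the given compact set: near the endpoints this uses the local (Bessel-parametrix) bound \( |P_\n|\lesssim|\gamma_\n(S_\n\Phi_\n)^{(0)}| \) together with the integrable \( |x-\alpha_{\n,i}|^{-1/4} \)-type growth of \( S_\n^{(0)} \), and on the pushed-out intervals it uses the strict variational inequality of the vector equilibrium problem, which makes \( |P_\n| \) exponentially smaller there than \( |\gamma_\n(S_\n\Phi_\n)^{(0)}| \) is on \( \Delta_{\n,i} \). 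In the fully Riemann--Hilbert version these points are absorbed into the local parametrix construction and the estimate \( R=I+\mathcal O(|\n|^{-1}) \); I regard the algebraic identification of the limit as the conceptual content and these endpoint/pushing estimates as the technical core.
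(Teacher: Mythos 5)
Your reduction of the statement to the asymptotics of \( R_\n^{(i)} \), the representation \( R_\n^{(i)}(z)=\frac1{2\pi\ic}\int_{\Delta_i}\frac{\rho_i(x)P_\n(x)}{x-z}\,\dd x \), and the algebraic identification of \( \rho_i\big[(S_\n\Phi_\n)_+^{(0)}+(S_\n\Phi_\n)_-^{(0)}\big] \) with the jump of \( (S_\n\Phi_\n)^{(i)}/w_{\n,i} \) across \( \Delta_{\n,i}^\circ \) are all correct. Your parenthetical ``industrial route'' is in fact exactly what the paper does: theorem~\ref{cor:typeIIb} is obtained by reading off the \((1,i+1)\) entry of \( \boldsymbol Y_\n=\boldsymbol C_\n\boldsymbol Z_\n\boldsymbol M_\n\boldsymbol D_\n \) in \eqref{Y1}, with \( \boldsymbol Z_\n=\boldsymbol I+\boldsymbol{\mathcal O}\big(|\n|^{-1}\big) \), in the same way that the \((1,1)\) entry gives theorem~\ref{thm:typeII}.

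The route you actually develop, however --- feeding the interior asymptotics of \( P_\n \) into the Cauchy transform --- has a genuine gap, and it is not located where you place it. The obstacle is not primarily the endpoints or the pushed-out intervals; it is the bulk. On compact subsets of \( \Delta_{\n,i}^\circ \) the error in the second formula of theorem~\ref{thm:typeII} is of size \( \mathcal O\big(|\n|^{-1}\big)\gamma_\n\big|(S_\n\Phi_\n)^{(0)}_\pm(x)\big| \), and the only a priori bound for its Cauchy transform is \( \mathcal O\big(|\n|^{-1}\big)\gamma_\n\max_x\big|(S_\n\Phi_\n)^{(0)}_\pm(x)\big| \). But the target \( \gamma_\n\big(S_\n\Phi_\n\big)^{(i)}(z)/w_{\n,i}(z) \) is exponentially small in \( |\n| \) relative to that maximum for \( z \) in a fixed compact set away from \( \Delta_{\vc,i} \), because \( \log\big|\Phi_\n^{(i)}\big|=|\n|\,V^{\omega_{\n,i}}+\mathrm{const} \) and the harmonic function \( V^{\omega_{\n,i}} \) lies strictly below its maximum over \( \Delta_{\n,i} \) off the interval. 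The exponential smallness of \( R_\n^{(i)} \) is produced by orthogonality-induced cancellation in the integral, and a generic perturbation of \( P_\n \) that is only controlled in relative size \( \mathcal O\big(|\n|^{-1}\big) \) destroys that cancellation: your argument delivers an additive error that swamps the main term, not the claimed multiplicative \( 1+\mathcal O\big(|\n|^{-1}\big) \). Repairing this requires knowing that the error in \( P_\n \) is itself nearly orthogonal to \( x^k\dd\mu_i \), \( k\le n_i-1 \) --- information carried precisely by the matrix structure \( \boldsymbol Z_\n\boldsymbol M_\n\boldsymbol D_\n \), where the common right factor \( \Phi_\n^{(i)} \) in the \((i+1)\)-st column keeps main term and error on the same exponential scale. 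So the matrix-entry argument is not an optional alternative here; it is the proof.
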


Clearly, the error of approximation is small in \( D_{\n,i}^+ \) and is large in \( D_{\n,i}^- \), where
\[
D_{\n,i}^+ := \big\{z\in\mathbb{C}:\big|\Phi_\n^{(i)}(z)/\Phi_\n^{(0)}(z)\big|<1\big\} \quad \text{and} \quad D_{\n,i}^- := \big\{z\in\mathbb{C}:\big|\Phi_\n^{(i)}(z)/\Phi_\n^{(0)}(z)\big|>1\big\}.
\]
It is known \cite{GRakh81,Y16} that the domains \( D_{\n,i}^\pm \) converge in the Hausdorff metric to certain domains \( D_{\vc,i}^\pm \) when \( |\n|\to\infty \), \( \n\in\mathcal N_\vc \). The divergence domain \( D_{\vc,i}^- \) is always bounded, possibly empty, and necessarily contains \( \Delta_i\setminus \Delta_{\vc,i} \), see Figure~\ref{fig:pushing}. The ratio \( |\Phi_\n^{(i)}/\Phi_\n^{(0)}| \) is geometrically small on closed subsets of \( D_{\vc,i}^+ \).

\begin{figure}[!ht]
\centering
\includegraphics[scale=.55]{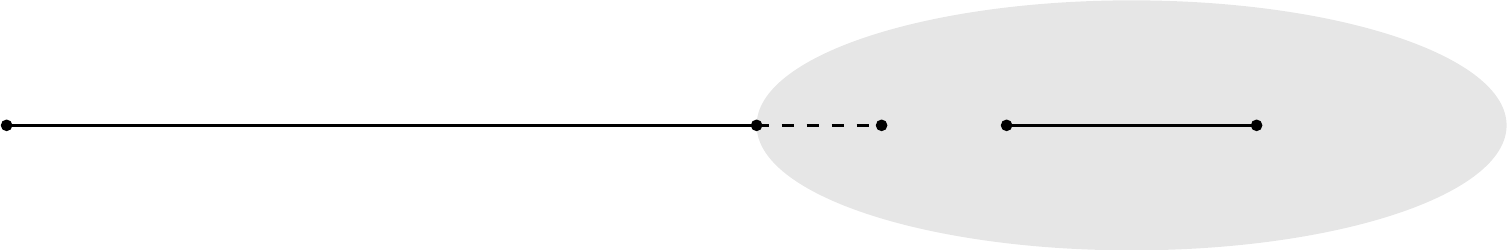}
\caption{\small Schematic representation of the pushing effect and a divergence domain in the case of 2 intervals (in this case $D_{\vc,2}^-=\varnothing$).}
\begin{picture}(0,0)
\put(-123,54){$\alpha_1$}
\put(-5,54){$\beta_{\vc,1}$}
\put(17,54){$\beta_1$}
\put(36,54){$\alpha_2$}
\put(76,54){$\beta_2$}
\put(56,70){$D_{\vc,1}^-$}
\end{picture}
\label{fig:pushing}
\end{figure}

\begin{proof}[Proof of theorems~\ref{thm:typeII} and~\ref{cor:typeIIb}]

Theorem~\ref{thm:typeII}  and corollary~\ref{cor:typeIIb} were proven in \cite[Theorem~2.5]{Y16}.  Extension to multiple orthogonal polynomials \cite{GerKVA01} of by now classical approach of Fokas, Its, and Kitaev \cite{FIK91,FIK92} connecting orthogonal polynomials to matrix Riemann-Hilbert problems was used followed by the asymptotic analysis based on the non-linear steepest descent method of Deift and Zhou \cite{DZ93}. The following definitions will be important for the remaining proofs in this section. Set
\begin{equation}
\label{Y}
\boldsymbol Y_\n := \left(\begin{array}{cccc}
P_{\vec n} & R_{\vec n}^{(1)} & \cdots & R_{\vec n}^{(d)} \smallskip\\
m_{\vec n,1}P_{\vec n-\vec e_1} & m_{\vec n,1}R_{\vec n-\vec e_1}^{(1)} & \cdots & m_{\vec n,1}R_{\vec n-\vec e_1}^{(d)} \\
\vdots & \vdots & \ddots & \vdots \\
m_{\vec n,d}P_{\vec n-\vec e_d} & m_{\vec n,d}R_{\vec n-\vec e_d}^{(1)} & \cdots & m_{\vec n,d}R_{\vec n-\vec e_d}^{(d)}
\end{array}\right),
\end{equation}
where the constants $m_{\vec n,i}$ are such that \( \lim_{z\to\infty}m_{\vec n,i}R_{\vec n-\vec e_i}^{(i)}(z)z^{n_i}=1 \). Further, let \( \chi_\n(\z) \) be given by \eqref{chi} on \( \RS_\n \) and the constants \( \{A_{\n,i},B_{\n,i} \}_{i=1}^d \) be as in \eqref{ABs}. Define
\begin{equation}\label{sd_jk1}
\Upsilon_{\n,i}(\z) := A_{\n,i}/\big(\chi_\n(\z)-B_{\n,i}\big), \quad i\in\{1,\ldots,d\}.
\end{equation}
Clearly, it holds that
\begin{equation}
\label{Upsilons}
\Upsilon_{\n,i}^{(i)}(\z) = z + \mathcal O(1) \quad \text{and} \quad \Upsilon_{\n,i}^{(0)}(\z) = A_{\n,i}\big( z^{-1} + B_{\n,i}z^{-2} + \mathcal O\big(z^{-3}\big) \big)
\end{equation}
as \( z\to\infty \). Let
\begin{equation}
\label{M}
\boldsymbol M_\n:=\left(\begin{array}{cccc}
S_\n^{(0)} & S_\n^{(1)}/w_{\n,1} & \cdots & S_\n^{(d)}/w_{\n,d} \medskip\\
\big( S_\n\Upsilon_{\n,1}\big)^{(0)} & \big( S_\n\Upsilon_{\n,1}\big)^{(1)}/w_{\n,1} & \cdots &\big( S_\n\Upsilon_{\n,1}\big)^{(d)}/w_{\n,d} \smallskip \\
\vdots & \vdots & \ddots & \vdots \\
\big( S_\n\Upsilon_{\n,d}\big)^{(0)} & \big( S_\n\Upsilon_{\n,d}\big)^{(1)}/w_{\n,1} & \cdots & \big( S_\n\Upsilon_{\n,d}\big)^{(d)}/w_{\n,d}
\end{array}\right)
\end{equation}
and \( \boldsymbol C_\n \) be the diagonal matrix of constants such that
\[
\lim_{z\to\infty}\boldsymbol C_\n(\boldsymbol M_\n \boldsymbol D_\n)(z)z^{-\sigma(|\n|)} = \boldsymbol I,  \quad \boldsymbol D_\n :=\diag\big(\Phi_{\vec n}^{(0)},\ldots,\Phi_{\vec n}^{(d)}\big),
\]
where $\displaystyle \sigma(\vec n) := \diag\left(|\vec n|, -n_1,\ldots,-n_d\right)$.
\begin{figure}[!ht]
\centering
\includegraphics[scale=1]{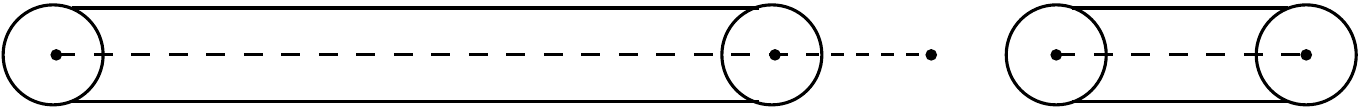}
\caption{\small Contour \( \Sigma \) (solid lines) in the case of two intervals \( \Delta_{\vc,1}=[\alpha_1,\beta_{\vc,1}] \) and \( \Delta_{\vc,2}=[\alpha_2,\beta_2] \).}
\begin{picture}(0,0)
\put(70,67){$\Sigma$}
\put(-184,42){$\alpha_1$}
\put(22,42){$\beta_{\vc,1}$}
\put(70,42){$\beta_1$}
\put(106,42){$\alpha_2$}
\put(178,42){$\beta_2$}
\end{picture}
\label{fig:Sigma}
\end{figure}
 Then it was shown in the proof of \cite[Theorem~2.5]{Y16} that there exists a contour \( \Sigma \), see Figure~\ref{fig:Sigma}, that can be made to avoid any given closed set \( K\subset \overline{\mathbb{C}}\setminus\bigcup_{i=1}^d\Delta_{\vc,i} \) except for the part \( K\cap\bigcup_{i=1}^d(\Delta_i\setminus \Delta_{\vc,i} \)) and any given compact set \( F\subset\bigcup_{i=1}^d \Delta_{\vc,i}^\circ \) such that
\begin{equation}
\label{Y1}
\boldsymbol Y_\n = \boldsymbol C_\n \boldsymbol Z_\n \boldsymbol M_\n \boldsymbol D_\n \quad \text{and} \quad \boldsymbol Y_{\n\pm} = \boldsymbol C_\n \boldsymbol Z_\n \boldsymbol M_{\n\pm} \boldsymbol D_{\n\pm}(\boldsymbol I \pm(1/\rho_l) \boldsymbol E_{l+1,1} )
\end{equation}
on \( K \) and on \( F\cap \Delta_{\vc,l} \), \( l\in\{1,\ldots,d\} \), respectively, where \( \boldsymbol Z_\n \) is holomorphic in \( \overline{\mathbb{C}}\setminus \Sigma \), all but \( (i+1) \)-st  column of \( \boldsymbol Z_\n \) are holomorphic across \( \Delta_i\setminus \Delta_{\vc,i} \), \( \boldsymbol Z_\n(\infty) = \boldsymbol I \), and \( \boldsymbol Z_\n = \boldsymbol I + \boldsymbol{\mathcal O}\big(|\n|^{-1}\big) \) uniformly in \( \overline{\mathbb{C}} \). Let \( Z_k := [\boldsymbol Z_\n]_{1,k+1} -\delta_{0k} \), where  \( \delta_{ij} \) is the usual Kronecker symbol, \( k\in\{0,\ldots,d\} \). Then we get that
\[
[\boldsymbol Z_\n \boldsymbol M_\n]_{1,1} = S_\n^{(0)}\left(1+\sum_{l=0}^dZ_l\Upsilon_{\n,l}^{(0)}\right) =  \left(1 + \mathcal O\big(|\n|^{-1}\big)\right)S_\n^{(0)}
\]
uniformly on \( K \), where the second equality holds because \(  Z_k = O\big(|\n|^{-1}\big) \) uniformly in \( \overline{\mathbb{C}} \) (including the traces on \( \Sigma \)) and the functions \( \Upsilon_{\n,l}^{(0)} \) converge to the functions \( \Upsilon_{\vc,l}^{(0)} \) also uniformly on \( K \). This proves the first asymptotic formula of theorem~\ref{thm:typeII}. Similarly, we have that
\[
[\boldsymbol Z_\n \boldsymbol M_\n]_{j+1,1} = \left(1 + \mathcal O\big(|\n|^{-1}\big)\right)\big(\Upsilon_{\n,j}S_\n\big)^{(0)}
\]
uniformly on closed subsets of \( \overline{\mathbb{C}}\setminus\bigcup_{i=1}^d\Delta_{\vc,i} \), \( j\in\{1,\ldots,d\} \). Therefore, it follows from \eqref{Upsilons} and the choice of \( \gamma_\n \) that
\begin{equation}
\label{mns}
m_{\n,j} = \left(1 + \mathcal O\big(|\n|^{-1}\big)\right)A_{\n,j}[\boldsymbol C_\n]_{j+1,j+1}\gamma_\n^{-1},
\end{equation}
\( j\in\{1,\ldots,d\} \). Furthermore, it holds that
\[
[\boldsymbol Z_\n \boldsymbol M_\n]_{1,i+1} = S_\n^{(i)}\left(1+\sum_{l=0}^dZ_l\Upsilon_{\n,l}^{(i)}\right)/w_{\n,i} =  \left(1 + \mathcal O\big(|\n|^{-1}\big)\right)S_\n^{(i)}/w_{\n,i}
\]
uniformly on closed subsets of \( \overline{\mathbb C}\setminus\Delta_{\vc,i} \), \( i\in\{1,\ldots,d\} \), where one needs to observe that even though \( \Upsilon_{\n,i}^{(i)} \) has a pole  at infinity, \( Z_i \) has a zero there, and therefore the desired estimate is obtained via the maximum modulus principle for holomorphic functions. Thus,
\begin{equation}
\label{RniAs}
R_{\n,i}(z) = \left(1 + \mathcal O\big(|\n|^{-1}\big)\right)\gamma_\n\big(S_\n\Phi_\n\big)^{(i)}(z)/w_{\n,i}(z)
\end{equation}
uniformly on closed subsets of \( \overline{\mathbb C}\setminus\Delta_{\vc,i} \), \( i\in\{1,\ldots,d\} \),  which proves Theorem~\ref{cor:typeIIb}. Finally, we get on \( F\subset \Delta_{\vc,k}^\circ \) that
\begin{eqnarray*}
P_\n &=& \gamma_\n\big(S_\n^{(0)}\Phi_\n^{(0)}\big)_\pm\left(1+\sum_{l=0}^dZ_l\Upsilon_{\n,l\pm}^{(0)}\right) \pm \gamma_\n \big(S_\n^{(k)}\Phi_\n^{(k)}\big)_\pm \left(1+\sum_{l=0}^dZ_l\Upsilon_{\n,l\pm}^{(k)}\right)/(\rho_lw_{\n,k\pm}) \\
&=& \gamma_\n\big(S_\n^{(0)}\Phi_\n^{(0)}\big)_\pm\left(1+\sum_{l=0}^dZ_l\Upsilon_{\n,l\pm}^{(0)}\right) + \gamma_\n\big(S_\n^{(0)}\Phi_\n^{(0)}\big)_\mp\left(1+\sum_{l=0}^dZ_l\Upsilon_{\n,l\mp}^{(0)}\right)
\end{eqnarray*}
by the properties of the functions \( S_\n^{(l)} \) and since \( F^{(0)}_\pm = F^{(l)}_\mp \) on \( \Delta_{\n,l} \) for a rational function \( F \) on \( \RS_\n \). This proves the second asymptotic formula of theorem~\ref{thm:typeII}.
\end{proof}

\begin{proof}[Proof of theorem~\ref{thm:typeI}]

We can decompose matrix \( \boldsymbol M_\n \) in \eqref{M} as
\[
\boldsymbol M_\n = \boldsymbol \Upsilon_\n\boldsymbol S_\n, \quad \boldsymbol S_\n := \diag\left(S_\n^{(0)},S_\n^{(1)}/w_{\n,1},\ldots,S_\n^{(d)}/w_{\n,d}\right), \quad [\boldsymbol\Upsilon_\n]_{l+1,k+1} = \Upsilon_{\n,l}^{(k)},
\]
where for convenience we put \( \Upsilon_{\n,0} \equiv 1 \). Let \( \Pi_{\n,i}(\z) \), \( i\in\{1,\ldots,d\} \), be a rational function on \( \RS_\n \) with zero/pole divisor and normalization given by
\[
\infty^{(0)} + 2\big(\infty^{(1)} + \cdots + \infty^{(d)} \big) - \infty^{(i)} - \mathcal D_\n \quad \text{and} \quad \Pi_{\n,i}^{(i)}(z) = z^{-1} + \mathcal O\big(z^{-2}\big).
\]
Observe that \( \Pi_{\n,i} = g_{\n,i}\Pi_\n\Upsilon_{\n,i} \) for some normalizing constants \( g_{\n,i} \), \( i\in\{1,\ldots,d\} \). Set \( \boldsymbol \Pi_\n \) to be the matrix such that \( \big[\boldsymbol \Pi_\n \big]_{k+1,j+1} = \Pi_{\n,j}^{(k)} \), where we put \( \Pi_{\n,0}:=\Pi_\n \). Then it holds that
\[
\big[\boldsymbol\Upsilon_\n \boldsymbol \Pi_\n \big]_{l+1,j+1} = \sum_{k=0}^d\big(\Upsilon_{\n,l}\Pi_{\n,j}\big)^{(k)},
\]
which is necessarily a meromorphic function on \( \overline{\mathbb{C}} \). As it can have at most square root singularities at the points \( \{\alpha_{\n,i},\beta_{\n,i}\} \), \( i\in\{1,\ldots,d\} \), it is a polynomial. It is further clear from the behavior of this function at infinity that \( \big[\boldsymbol\Upsilon_\n \boldsymbol \Pi_\n \big]_{l+1,j+1} \equiv \delta_{lj} \). That is,
\begin{equation}
\label{M-inverse}
\boldsymbol M_\n^{-1} = \boldsymbol S_\n^{-1}\boldsymbol\Pi_\n.
\end{equation}

Similarly to the matrix \( \boldsymbol Y_\n \), define
\[
\widehat{\boldsymbol Y}_\n := \left(\begin{matrix} L_\n & -A_\n^{(1)} & \cdots & -A_\n^{(d)} \medskip \\ -d_{\n,1} L_{\n+\vec e_1}  & d_{\n,1} A_{\n+\vec e_1}^{(1)} & \cdots & d_{\n,1}A_{\n+\vec e_1}^{(d)}\medskip \\ \vdots & \vdots & \ddots & \vdots \medskip \\ -d_{\n,d} L_{\n+\vec e_d}  & d_{\n,d} A_{\n+\vec e_d}^{(1)} & \cdots &  d_{\n,d}A_{\n+\vec e_d}^{(d)} \end{matrix}\right),
\]
where the constant \( d_{\n,i} \) is chosen so that the polynomial \( d_{\n,i}A_{\n+\vec e_i}^{(i)} \) is monic. It was shown in \cite[Theorem~4.1]{GerKVA01} that
\[
\widehat{\boldsymbol Y}_\n = \big(\boldsymbol Y_\n^\mathsf{T}\big)^{-1}.
\]
Hence, it follows from \eqref{Y1} and \eqref{M-inverse} that on closed subsets of \( \overline{\mathbb{C}}\setminus \bigcup_{i=1}^d\Delta_{\vc,i} \) it holds that
\[
\widehat{\boldsymbol Y}_\n = \boldsymbol C_\n^{-1}\widehat{\boldsymbol Z}_\n\boldsymbol \Pi_\n^\mathsf{T}\boldsymbol S_\n^{-1}\boldsymbol D_\n^{-1},
\]
where \( \widehat{\boldsymbol Z}_\n := \big(\boldsymbol Z_\n^{-1}\big)^{\mathsf T} = \boldsymbol I + \big[\widehat Z_{l,j}\big]_{l,j=1}^{d+1} \) with \( \widehat Z_{l,j}(\infty) =0 \) and \( \widehat Z_{l,j} = \mathcal O(\big|\n|^{-1}\big) \) uniformly in \( \overline{\mathbb{C}} \). Then
\[
\big[\widehat{\boldsymbol Z}_\n\boldsymbol\Pi_\n^{\mathsf T}\big]_{1,1} = \Pi_\n^{(0)} + \sum_{k=0}^d\widehat Z_{1,k+1}\Pi_{\n,k}^{(0)} = \left( 1 + \sum_{k=0}^dg_{\n,k}\widehat Z_{1,k+1}\Upsilon_{\n,k}^{(0)} \right) \Pi_\n^{(0)} \\ = \big( 1+\mathcal O\big(|\n|^{-1}\big) \big) \Pi_\n^{(0)}
\]
uniformly on closed subsets of \( \overline{\mathbb{C}}\setminus \bigcup_{i=1}^d\Delta_{\vc,i} \), where the last equality follows from the fact that the functions \( \Upsilon_{\n,i}^{(0)} \) converge to \( \Upsilon_{\vc,i}^{(0)} \) uniformly on \( \overline{\mathbb{C}}\setminus\bigcup_{i=1}^d\{\alpha_{\vc,i},\beta_{\vc,i}\} \) (including the traces on \( \bigcup_{i=1}^d\Delta_{\vc,i}^\circ \)) and the constants \( g_{\n,i} \) converge to some constants \( g_{\vc,k} \). Therefore, the last claim of the theorem follows. Similarly we get that
\[
\big[\widehat{\boldsymbol Z}_\n\boldsymbol\Pi_\n^{\mathsf T}\big]_{l+1,1} = \left(1 + \sum_{k=0}^d\widehat Z_{l+1,k+1}\frac{g_{\n,k}\Upsilon_{\n,k}^{(0)}}{g_{\n,l}\Upsilon_{\n,l}^{(0)}}\right) \Pi_{\n,l}^{(0)} = \big( 1+\mathcal O\big(|\n|^{-1}\big) \big)\Pi_{\n,l}^{(0)}
\]
uniformly on closed subsets of \( \overline{\mathbb{C}}\setminus \bigcup_{i=1}^d\Delta_{\vc,i} \), \( l\in\{1,\ldots,d\} \). Since \( L_{\n+\vec e_l}(z) = z^{-|\n|-1} + \mathcal O\big(z^{-|\n|-2}\big) \) as \( z\to\infty \), we also get that
\begin{equation}
\label{Lnl}
L_{\n+\vec e_l}(z) = \left(1 + \mathcal O\big(|\n|^{-1}\big)\right) \frac{\Upsilon_{\n,l}^{(0)}(z)}{A_{\n,l}} \frac{\Pi_\n^{(0)}(z)}{\gamma_\n(S_\n\Phi_\n)^{(0)}(z)}
\end{equation}
uniformly on closed subsets of \( \overline{\mathbb{C}}\setminus \bigcup_{i=1}^d\Delta_{\vc,i} \), \( l\in\{1,\ldots,d\} \). It further holds that
\[
\big[\widehat{\boldsymbol Z}_\n\boldsymbol\Pi_\n^{\mathsf T}\big]_{1,l+1} = \Pi_\n^{(l)} + \sum_{k=0}^d\widehat Z_{1,k+1}\Pi_{\n,k}^{(l)} = \big( 1+\mathcal O\big(|\n|^{-1}\big) \big) \Pi_\n^{(l)}
\]
uniformly on closed subsets of \( \overline{\mathbb{C}}\setminus \bigcup_{i=1}^d\Delta_{\vc,i} \), \( l\in\{1,\ldots,d\} \), where we need to use the maximum modulus principle and vanishing of \( \widehat Z_{1,l+1} \) at infinity to cancel the pole of \( \Upsilon_{\n,l}^{(l)} \). This estimate immediately proves the first asymptotic formula of the theorem on closed subsets of \( \overline{\mathbb{C}}\setminus \bigcup_{j=1}^d\Delta_{\vc,j} \). Since the ratio \( \gamma_\n A_\n^{(i)}(S_\n\Phi_\n)^{(i)}/(\Pi_\n^{(i)}w_{\n,i}) \) is holomorphic outside \( \Delta_{\n,i} \), the asymptotic formula is valid on closed subsets of  \( \overline{\mathbb{C}}\setminus \Delta_{\vc,i} \) again by the maximum modulus principle for holomorphic functions. Finally, the second relation in \eqref{Y1} and \eqref{M-inverse} give us
\[
\widehat{\boldsymbol Y}_{\n\pm} = \boldsymbol C_\n^{-1} \widehat{\boldsymbol Z}_\n \boldsymbol \Pi_{\n\pm}^\mathsf{T}\boldsymbol S_{\n\pm}^{-1} \boldsymbol D_{\n\pm}^{-1} \big(\boldsymbol I\mp(1/\rho_l)\boldsymbol E_{1,l+1}\big)
\]
on any compact subset of \( \Delta_{\vc,l}^\circ \). Therefore,
\[
\big[ \widehat{\boldsymbol Y}_\n\big]_{1,l+1} = \big( 1+\mathcal O\big(|\n|^{-1}\big) \big)\frac{\Pi_{\n\pm}^{(l)}w_{\n,l\pm}}{\gamma_\n(S_\n\Phi_\n)_\pm^{(l)}} \mp (1/\rho_l)\big( 1+\mathcal O\big(|\n|^{-1}\big) \big) \frac{ \Pi_{\n\pm}^{(0)}
}{\gamma_\n(S_\n\Phi_\n)_\pm^{(0)}}.
\]
Since \(  \mp(1/\rho_l)\Pi_{\n\pm}^{(0)}/(S_\n\Phi_\n)_\pm^{(0)} = \Pi_{\n\mp}^{(l)}w_{\n,l\mp}/(S_\n\Phi_\n)_\mp^{(l)} \)  on \( \Delta_{\n,l}
\) the second asymptotic formula of the theorem now easily follows.
\end{proof}

\subsection{Recurrences}

\begin{proof}[Proof of theorem~\ref{thm:recurrence}]

It can be deduced from orthogonality relations \eqref{1.2} that
\[
R_\n^{(i)}(z) = -\frac{h_{\n,i}}{2\pi\mathrm i}\frac1{z^{n_i+1}} + \mathcal O\big(z^{-n_i-2}\big), \quad h_{\n,i} := \int P_\n(x)x^{n_i}\dd\mu_i(x),
\]
\( i\in\{1,\ldots,d\} \). In particular, we have that $m_{\vec n,i}=-2\pi\mathrm i/h_{\vec n-\vec e_i,i}$ in \eqref{Y}. Since
\[
-\frac{h_{\n,i}}{2\pi\mathrm i} = \gamma_\n\frac{1+\mathcal O\left(|\n|^{-1}\right)}{[\boldsymbol C_\n]_{i+1,i+1}}
\]
by \eqref{RniAs} and the definition of the matrix \( \boldsymbol C_\n \), we get from \eqref{por1} and \eqref{mns} that
\[
a_{\n,i} = h_{\n,i}/ h_{\n-\vec e_i,i} = \left(1+\mathcal O\left(|\n|^{-1}\right)\right)A_{\n,i},
\]
\( i\in\{1,\ldots,d\} \). Furthermore, it follows from \eqref{por}, \eqref{Ln}, and \eqref{LnInfty} that
\[
zL_{\n+\vec e_i}(z) - L_\n(z) = b_{\n,i}z^{-|\n|-1} + \mathcal O\big(z^{-|\n|-2}\big)
\]
as \( z\to\infty \), \( i\in\{1,\ldots,d\} \). Hence, we get from \eqref{sd_asl}, \eqref{Upsilons}, and \eqref{Lnl} that
\[
b_{\n,i} = \left(1+\mathcal O\left(|\n|^{-1}\right)\right)B_{\n,i},
\]
\( i\in\{1,\ldots,d\} \). As mentioned in the proof of theorem~\ref{thm:typeII}, it holds that
\[
\lim_{\mathcal N_\vc}A_{\n,i} = A_{\vc,i} \quad \text{and} \quad \lim_{\mathcal N_\vc} B_{\n,i} = B_{\vc,i},
\]
\( i\in\{1,\ldots,d\} \), from which the claim of the theorem easily follows.
\end{proof}

Nearest-neighbor recurrences \eqref{1.7} lead to other recurrence relations for multiple orthogonal polynomials \eqref{1.2}, in particular, the so-called \emph{step-line recurrence}. Given an index $n\in\N$, it can be uniquely written as $n=md+i$, $i\in\{0,\ldots,d-1\}$. Set
\begin{equation}
\label{step-line}
P_n(x) = P_{\vec i(n)}(x), \quad \text{where} \quad \vec i(n) := \big(\underbrace{m+1,\ldots,m+1}_{i\text{ times}},\underbrace{m,\ldots,m}_{d-i\text{ times}}\big), \quad |\:\vec i(n)\:|=n.
\end{equation}
It is known \cite{ApKalLLRocha06} that the polynomials $P_n(x)$ satisfy $(d+2)$-term recurrence relations
\begin{equation}
\label{sl-rec}
xP_n(x) = P_{n+1}(x) + \sum_{k=0}^d\gamma_{n,k}P_{n-k}(x).
\end{equation}
In \cite[Theorem~1.2 and Lemma~4.2]{ApKalLLRocha06} it was shown that the existence of the ratio asymptotics for the polynomials \( P_{\vec i(n)}(z) \) is equivalent to the existence of the limits for the recurrence coefficients \( \gamma_{n,k} \), which were computed for \( d=2\). With the help of theorem~\ref{thm:recurrence} we can say more.

\begin{corollary}
\label{cor:step-line}
In the setting of theorem~\ref{thm:recurrence}, let polynomials $P_n(x)$ be defined by \eqref{step-line} and $\{\gamma_{n,k}\}_{k=0}^d$ be as in \eqref{sl-rec}.  If we set $A_j:=A_{\vc,j}$ and $B_j:=B_{\vc,j}$ for $\vc=(1/d,\ldots,1/d)$, then for $i\in\{0,\ldots,d-1\}$ it holds that
\begin{equation}
\label{step-line-limit}
\lim_{m\to\infty}\left\{
\begin{array}{l}
\gamma_{md+i,0} = B_{i+1}, \medskip \\
\gamma_{md+i,1}=A_1+\cdots+A_d, \medskip \\
\gamma_{md+i,k} = \sum_{j=1}^dA_j\prod_{l=0}^{k-2}(B_j-B_{i-l}), \quad k\in\{2,\ldots,d\},
\end{array}
\right.
\end{equation}
where we understand the subindices of $B$'s cyclicly, that is, $B_{-j}=B_{d-j}$ for $j\in\{0,\ldots, d-1\}$.
\end{corollary}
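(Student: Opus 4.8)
The plan is to pass to the limit directly in the step-line recurrence \eqref{sl-rec} and then solve the resulting algebraic relation. First I would record how the step line sits inside $\Z_+^d$: for $n=md+i$ with $i\in\{0,\dots,d-1\}$ one has $\vec i(n+1)=\vec i(n)+\vec e_{i+1}$ (with $\vec e_d$ when $i=d-1$), so along the step line the coordinate incremented at $\vec i(s)$ is $(s\bmod d)+1$, read cyclically. In particular $\big\{\vec i(md+i)\big\}_m$ is a sequence $\mathcal N_\vc$ as in \eqref{multi-indices} with $\vc=(1/d,\dots,1/d)$, so Corollary~\ref{cor:typeIIa} applies: writing $\chi:=\chi_\vc\big(z^{(0)}\big)$ and $T_j:=1/(\chi-B_j)$ (all $B$-indices cyclic, $B_j:=B_{\vc,j}$, $A_j:=A_{\vc,j}$), it gives $\lim P_{\vec i(s)}(z)/P_{\vec i(s)+\vec e_{(s\bmod d)+1}}(z)=T_{(s\bmod d)+1}$ uniformly on closed subsets of $\overline{\mathbb C}\setminus\bigcup_j\Delta_j$, and hence $P_{n-k}/P_{n+1}\to T_{i+1}T_i\cdots T_{i+1-k}$. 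The ratio asymptotics of $P_{\vec i(n)}$ being available, \cite[Theorem~1.2 and Lemma~4.2]{ApKalLLRocha06} guarantees that the limits $\gamma_k^{(i)}:=\lim_m\gamma_{md+i,k}$ exist for every $i$ and $k$ (alternatively, these coefficients are uniformly bounded, so it would be enough to identify an arbitrary subsequential limit).

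Dividing \eqref{sl-rec} by $P_{n+1}$ and letting $m\to\infty$ along the residue class $i$ then gives
\[
x\,T_{i+1}\ =\ 1+\sum_{k=0}^d\gamma_k^{(i)}\,T_{i+1}T_i\cdots T_{i+1-k},
\]
where $x$ and $\chi$ are linked by the surface equation $x=\chi+\sum_{l=1}^d A_l/(\chi-B_l)$ coming from the remark containing \eqref{cccc1_1} (substitute $\Upsilon_{\vc,l}\big(z^{(0)}\big)=A_l/(\chi-B_l)$). Both sides are rational in $\chi$, so this holds \emph{identically} in $\chi$. Multiplying through by $(\chi-B_{i+1})\prod_{j=1}^d(\chi-B_j)$, using $(\chi-B_{i+1})T_{i+1}=1$ and $T_{i+1}T_i\cdots T_{i+1-k}\cdot(\chi-B_{i+1})\prod_j(\chi-B_j)=Q_k(\chi)$ with $Q_k(\chi):=\prod_{t=0}^{d-k-1}(\chi-B_{i+1+t})$ (so $Q_0=\prod_j(\chi-B_j)$, $Q_d\equiv1$), one reaches the polynomial identity
\[
B_{i+1}\prod_{j=1}^d(\chi-B_j)+\sum_{l=1}^d A_l\prod_{j\neq l}(\chi-B_j)\ =\ \sum_{k=0}^d\gamma_k^{(i)}\,Q_k(\chi).
\]

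Next I would extract the coefficients from this identity by descending degree. As $\deg Q_k=d-k$, matching the $\chi^d$-terms gives $\gamma_0^{(i)}=B_{i+1}$; subtracting $B_{i+1}Q_0$ and matching $\chi^{d-1}$-terms gives $\gamma_1^{(i)}=A_1+\dots+A_d$; what is left is $\sum_l A_l\prod_{j\neq l}(\chi-B_j)=\sum_{k=2}^d\gamma_k^{(i)}Q_k(\chi)$. Since $Q_k$ is divisible by $(\chi-B_{i+1})$ for $k\le d-1$ while $Q_d\equiv1$, evaluating at $\chi=B_{i+1}$ yields $\gamma_d^{(i)}=A_{i+1}\prod_{m\neq i+1}(B_{i+1}-B_m)$; subtracting it, dividing by $(\chi-B_{i+1})$, and evaluating at $\chi=B_{i+2}$ yields $\gamma_{d-1}^{(i)}$, and iterating this peeling-off at the cyclically consecutive nodes $\chi=B_{i+1},B_{i+2},\dots$ determines all $\gamma_k^{(i)}$. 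Reorganizing the resulting telescoping products identifies them with $\sum_{j=1}^d A_j\prod_{l=0}^{k-2}(B_j-B_{i-l})$ for $k\ge2$; here the summands with $j\in\{i,i-1,\dots,i-k+2\}$ vanish thanks to the factor $(B_j-B_j)$, which is exactly the mechanism that makes the successive evaluations consistent. (Equivalently, one may plug the claimed closed form back into the polynomial identity and verify it by expanding in elementary symmetric functions of the $B_j$.) Since the limits exist and are pinned down by the polynomial identity, $\lim_m\gamma_{md+i,k}$ is as asserted.

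The step I expect to be the main obstacle is this last extraction: maintaining the cyclic index bookkeeping while peeling off the $\gamma_k^{(i)}$ and checking that the peeled-off quantities really coincide with the compact formula $\sum_j A_j\prod_{l=0}^{k-2}(B_j-B_{i-l})$ for all $k\ge2$ — i.e., that the two polynomial bases $\{Q_k\}$ and $\big\{\prod_{j\neq l}(\chi-B_j)\big\}$ are related by precisely this (triangular) transition. A secondary point that needs care is the convergence of $\gamma_{n,k}$ along a fixed phase $i$, for which I would either cite the equivalence in \cite{ApKalLLRocha06} or argue self-containedly from uniform boundedness plus uniqueness of the solution of the limiting polynomial identity.
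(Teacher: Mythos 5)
Your argument is correct in substance but takes a genuinely different route from the paper's. The paper never divides by \(P_{n+1}\) nor passes to a limiting functional equation: it works exactly, at finite \(n\), with the nearest-neighbour recurrences. Subtracting two instances of \eqref{1.7} gives the unfolding identity \eqref{step}, and iterating it while matching degrees expresses each \(\gamma_{n,k}\) as an explicit polynomial in the coefficients \(a_{\n,j},b_{\n,j}\); theorem~\ref{thm:recurrence} then yields \eqref{step-line-limit} directly, with existence of the limits (and their uniform boundedness) coming for free. Your route instead consumes the stronger output of corollary~\ref{cor:typeIIa} together with the spectral-curve equation \(z=\chi+\sum_{l}A_l/(\chi-B_l)\), and identifies the limits as the coordinates of \(B_{i+1}\prod_{j}(\chi-B_j)+\sum_{l}A_l\prod_{j\ne l}(\chi-B_j)\) in the triangular basis \(Q_k(\chi)=\prod_{t=0}^{d-k-1}(\chi-B_{i+1+t})\). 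Your final ``transition of bases'' step does work: it is exactly the Newton interpolation formula for \(\prod_{j\ne l}(\chi-B_j)\) at the nodes \(B_{i+1},B_{i+2},\dots\), whose divided differences are precisely \(\prod_{s=0}^{k-2}(B_l-B_{i-s})\) (vanishing for the \(l\) you indicate), so the claimed closed form checks out. What your approach buys is a conceptual identification of the limits through the algebraic curve; what it costs is that the existence of \(\lim_m\gamma_{md+i,k}\) must be imported, either from \cite{ApKalLLRocha06} or from uniform boundedness plus the uniqueness of the solution of your polynomial identity --- and the boundedness itself is most painlessly obtained from the paper's exact expressions, so this step is citable but not free. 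One small slip: after determining \(\gamma_1^{(i)}\), the line ``what is left is \(\sum_{l}A_l\prod_{j\ne l}(\chi-B_j)=\sum_{k=2}^d\gamma_k^{(i)}Q_k(\chi)\)'' should either run the sum from \(k=1\) or have \(\gamma_1^{(i)}Q_1\) subtracted on the left; this does not affect your subsequent evaluations at \(\chi=B_{i+1},B_{i+2},\dots\), which annihilate \(Q_1\) anyway.
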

\begin{proof}

Let $n=md+i$, $i\in\{0,\ldots,d-1\}$. It follows from \eqref{1.7} that
\[
zP_n(z) = P_{n+1}(z) + b_{\vec i(n),i+1}P_n(z) + \sum_{j=1}^da_{\vec i(n),j}P_{\vec i(n)-\vec e_j}.
\]
As the sum on the right-hand side of the equality above has degree at most $n-1$, the first limit in \eqref{step-line-limit} follows.  It can be inferred from \eqref{1.7} that
\begin{eqnarray}
P_{\vec i(n-l)-\vec e_j} &=& P_{\vec i(n-l)-\vec e_{i-l}} + \big(b_{\vec i(n-l)-\vec e_j-\vec e_{i-l},j} - b_{\vec i(n-l)-\vec e_j-\vec e_{i-l},i-l} \big) P_{\vec i(n-l)-\vec e_j-\vec e_{i-l}} \nonumber \\
\label{step}
&=& P_{n-l-1} + \big(b_{\vec i(n-l-1)-\vec e_j,j} - b_{\vec i(n-l-1)-\vec e_j,i-l} \big) P_{\vec i(n-l-1)-\vec e_j},
\end{eqnarray}
where we understand that \( \vec e_{i-l} = \vec e_{d+i-l} \) when \( i-l\leq 0 \). By using \eqref{step} with \( l=0 \), we get that
\[
\sum_{j=1}^da_{\vec i(n),j}P_{\vec i(n)-\vec e_j} = \sum_{j=1}^da_{\vec i(n),j} P_{n-1} + \sum_{j=1}^da_{\vec i(n),j}(b_{\vec i(n-1)-\vec e_j,j}-b_{\vec i(n-1)-\vec e_j,i}) P_{\vec i(n-1)-\vec e_j}.
\]
As the last sum above has degree at most $n-2$, the second limit in \eqref{step-line-limit} is proved. Observe that
\[
u_{n,j} := a_{\vec i(n),j}(b_{\vec i(n-1)-\vec e_j,j} - b_{\vec i(n-1)-\vec e_j,i}) \to A_j(B_j - B_i)
\]
as \( n\to\infty \). By using \eqref{step} with \( l = 1 \), we get that
\[
\sum_{j=1}^d u_{n,j}P_{\vec i(n-1)-\vec e_j} = \sum_{j=1}^d u_{n,j}P_{n-2} + \sum_{j=1}^d u_{n,j}(b_{\vec i(n-2)-\vec e_j,j}-b_{\vec i(n-2)-\vec e_j,i-1}) P_{\vec i(n-2)-\vec e_j}.
\]
As the last sum above has degree at most $n-2$, the  limit for \( \gamma_{md+i,2} \) in \eqref{step-line-limit} is established. Clearly, the rest of the limits can be easily shown by induction on \( k \).
\end{proof}

\bibliographystyle{plain}

\begin{thebibliography}{10}

\bibitem{aizenman}
M.~Aizenman and S.~Warzel.
\newblock {\em Random operators}, volume 168 of {\em Graduate Studies in
  Mathematics}.
\newblock American Mathematical Society, Providence, RI, 2015.
\newblock Disorder effects on quantum spectra and dynamics.

\bibitem{Ang19}
A.~Angelesco.
\newblock Sur deux extensions des fractions continues alg\'ebraiques.
\newblock {\em Comptes Rendus de l'Academie des Sciences, Paris}, 168:262--265,
  1919.

\bibitem{Apt_Szego}
A.~I. Aptekarev.
\newblock Asymptotics of polynomials of simultaneous orthogonality in the
  {A}ngelescu case.
\newblock {\em Mat. Sb. (N.S.)}, 136(178)(1):56--84, 1988.

\bibitem{Apt_TAMS}
A.~I. Aptekarev, A.~Branquinho, and W.~Van~Assche.
\newblock Multiple orthogonal polynomials for classical weights.
\newblock {\em Trans. Amer. Math. Soc.}, 355(10):3887--3914, 2003.

\bibitem{vv6}
A.~I. Aptekarev, G.~L\'opez~Lagomasino, and A.~Mart\'inez-Finkelshtein.
\newblock On {N}ikishin systems with discrete components and weak asymptotics
  of multiple orthogonal polynomials.
\newblock {\em Uspekhi Mat. Nauk}, 72(3(435)):3--64, 2017.

\bibitem{Ap98}
A.I. Aptekarev.
\newblock Multiple orthogonal polynomials.
\newblock {\em J. Comput. Appl. Math.}, 99:423--448, 1998.

\bibitem{Ap14}
A.I. Aptekarev.
\newblock Spectral problems of high-order recurrences.
\newblock {\em Amer. Math. Soc. Transl.}, 233:43--61, 2014.

\bibitem{ApDerVA15}
A.I. Aptekarev, M.~Derevyagin, and W.~Van Assche.
\newblock On 2{D} discrete {S}chr\"odinger operators associated with multiple
  orthogonal polynomials.
\newblock {\em J. Phys. A}, 48(6), 2015.

\bibitem{ApDerMikiVA16}
A.I. Aptekarev, M.~Derevyagin, H.~Miki, and W.~{Van Assche}.
\newblock Multidimensional {T}oda lattices: continuous and discrete time.
\newblock {\em SIGMA}, 2016.
\newblock 30 pages.

\bibitem{ApDerVA16}
A.I. Aptekarev, M.~Derevyagin, and W.~{Van Assche}.
\newblock Discrete integrable systems generated by {Hermite-Pad\'e}
  approximants.
\newblock {\em Nonlinearity}, 29(5):1487--1506, 2016.

\bibitem{ApKalLLRocha06}
A.I. Aptekarev, V.A. Kalyagin, G.~L\'opez Lagomasino, and I.A. Rocha.
\newblock On the limit behavior of recurrence coefficients for multiple
  orthogonal polynomials.
\newblock {\em J. Approx. Theory}, 139:346--370, 2006.

\bibitem{ApKalS09}
A.I. Aptekarev, V.A. Kalyagin, and E.B. Saff.
\newblock Higher-order three-term recurrences and asymptotics of multiple
  orthogonal polynomials.
\newblock {\em Constr. Approx.}, 30(2):175--223, 2009.

\bibitem{ApKal_Pal98}
A.I. Aptekarev and V.A. Kalyaguine.
\newblock Complex rational approximation and difference operators.
\newblock In {\em Proceedings of the {T}hird {I}nternational {C}onference on
  {F}unctional {A}nalysis and {A}pproximation {T}heory, {V}ol. {I}
  ({A}cquafredda di {M}aratea, 1996)}, number 52, Vol. I in Rend. Circ. Mat.
  Palermo (2) Suppl., pages 3--21, 1998.

\bibitem{ApLLRocha05}
A.I. Aptekarev, G.~L\'opez Lagomasino, and I.A. Rocha.
\newblock Asymptotic behavior of the ratio of {H}ermite-{P}ad\'e polynomials
  for {N}ikishin systems.
\newblock {\em Mat. Sb.}, 196(8):3--20, 2005.

\bibitem{AS_PAS92}
A.I. Aptekarev and H.~Stahl.
\newblock Asymptotics of {H}ermite-{P}ad\'e polynomials.
\newblock In A.~A. Gonchar and E.~B. Saff, editors, {\em Progress in
  Approximation Theory}, volume~19 of {\em Springer Ser. Comput. Math.}, pages
  127--167, Springer-Verlag, Berlin, 1992.

\bibitem{Bob_DIS04}
A.I. Bobenko.
\newblock Discrete differential geometry. {I}ntegrability as consistency.
\newblock In {\em Discrete integrable systems}, volume 644 of {\em Lecture
  {N}otes in {P}hys.}, pages 85--110. Springer, Berlin, 2004.

\bibitem{DZ93}
P.~Deift and X.~Zhou.
\newblock A steepest descent method for oscillatory {R}iemann-{H}ilbert
  problems. {A}symptotics for the m{K}d{V} equation.
\newblock {\em Ann. of Math.}, 137:295--370, 1993.

\bibitem{den1}
S.A. Denisov.
\newblock On the preservation of absolutely continuous spectrum for
  {S}chr\"odinger operators.
\newblock {\em J. Funct. Anal.}, 231(1):143--156, 2006.

\bibitem{gu}
U.~Fidalgo and G.~L\'opez Lagomasino.
\newblock Nikishin systems are perfect.
\newblock {\em Constr. Approx.}, 34(3):297--356, 2011.

\bibitem{peralta}
U.~Fidalgo, S.~Medina Peralta, and J.~M\'inguez~Ceniceros.
\newblock Mixed type multiple orthogonal polynomials: perfectness and
  interlacing properties of zeros.
\newblock {\em Linear Algebra Appl.}, 438(3):1229--1239, 2013.

\bibitem{vv3}
Galina Filipuk, Maciej Haneczok, and Walter Van~Assche.
\newblock Computing recurrence coefficients of multiple orthogonal polynomials.
\newblock {\em Numer. Algorithms}, 70(3):519--543, 2015.

\bibitem{FIK91}
A.S. Fokas, A.R. Its, and A.V. Kitaev.
\newblock Discrete {P}anlev\'e equations and their appearance in quantum
  gravity.
\newblock {\em Comm. Math. Phys.}, 142(2):313--344, 1991.

\bibitem{FIK92}
A.S. Fokas, A.R. Its, and A.V. Kitaev.
\newblock The isomonodromy approach to matrix models in {2D} quantum
  gravitation.
\newblock {\em Comm. Math. Phys.}, 147(2):395--430, 1992.

\bibitem{froese}
R.~Froese, F.~Halasan, and D.~Hasler.
\newblock Absolutely continuous spectrum for the {A}nderson model on a product
  of a tree with a finite graph.
\newblock {\em J. Funct. Anal.}, 262(3):1011--1042, 2012.

\bibitem{GerKVA01}
J.S. Geronimo, A.B. Kuijlaars, and W.~{Van Assche}.
\newblock Riemann-{Hilbert} problems for multiple orthogonal polynomials.
\newblock In {\em Special functions 2000: current perspective and future
  directions}, number~30 in NATO Sci. Ser. II Math. Phys. Chem., pages 23--59,
  Dordrecht, 2001. Kluwer Acad. Publ.

\bibitem{GRakh81}
A.A. Gonchar and E.A. Rakhmanov.
\newblock On convergence of simultaneous {P}ad\'e approximants for systems of
  functions of {M}arkov type.
\newblock {\em Trudy Mat. Inst. Steklov}, 157:31--48, 1981.
\newblock English transl. in {\it {P}roc. {S}teklov {I}nst. {M}ath.} 157, 1983.

\bibitem{w1}
M.~Haneczok and W.~Van Assche.
\newblock Interlacing properties of zeros of multiple orthogonal polynomials.
\newblock {\em J. Math. Anal. Appl.}, 389, 2012.

\bibitem{Herm73}
C.~Hermite.
\newblock Sur la fonction exponentielle.
\newblock {\em C. R. Acad. Sci. Paris}, 77:18--24, 74--79, 226--233, 285--293,
  1873.

\bibitem{Kal94}
V.A. Kalyagin.
\newblock Hermite-{P}ad\'e approximants and spectral analysis of nonsymmetric
  operators.
\newblock {\em Mat. Sb.}, 185(6):79--100, 1994.

\bibitem{keller}
M.~Keller.
\newblock On the spectral theory of operators on trees.
\newblock {\em PhD thesis}, 2011.

\bibitem{klein}
A.~Klein.
\newblock Absolutely continuous spectrum in the {A}nderson model on the {B}ethe
  lattice.
\newblock {\em Math. Res. Lett.}, 1(4):399--407, 1994.

\bibitem{MR3687129}
A.V. Komlov, R.V. Pal'velev, S.P. Suetin, and E.M. Chirka.
\newblock Hermite-{P}ad\'e approximants for meromorphic functions on a compact
  {R}iemann surface.
\newblock {\em Uspekhi Mat. Nauk}, 72(4(436)):95--130, 2017.

\bibitem{vv7}
G.~L\'opez~Lagomasino and S.~Medina Peralta.
\newblock On the convergence of type {I} {H}ermite-{P}ad\'e approximants.
\newblock {\em Adv. Math.}, 273:124--148, 2015.

\bibitem{vv1}
Doron~S. Lubinsky and Walter Van~Assche.
\newblock Simultaneous {G}aussian quadrature for {A}ngelesco systems.
\newblock {\em Jaen J. Approx.}, 8(2):113--149, 2016.

\bibitem{Mah68}
K.~Mahler.
\newblock Perfect systems.
\newblock {\em Compositio Math.}, 19:95--166, 1968.

\bibitem{Mar95}
A.A. Markov.
\newblock Deux d\'emonstrations de la convergence de certaines fractions
  continues.
\newblock {\em Acta Math.}, 19:93--104, 1895.

\bibitem{MR3489559}
A.~Mart\'inez-Finkelshtein, E.A. Rakhmanov, and S.P. Suetin.
\newblock Asymptotics of type {I} {H}ermite-{P}ad\'e polynomials for
  semiclassical functions.
\newblock In {\em Modern trends in constructive function theory}, volume 661 of
  {\em Contemp. Math.}, pages 199--228. Amer. Math. Soc., Providence, RI, 2016.

\bibitem{vv2}
Andrei Mart\'inez-Finkelshtein and Walter Van~Assche.
\newblock What is$\ldots$ a multiple orthogonal polynomial?
\newblock {\em Notices Amer. Math. Soc.}, 63(9):1029--1031, 2016.

\bibitem{morters_peres}
Peter Morters and Yuval Peres.
\newblock {\em Brownian motion}, volume~30 of {\em Cambridge Series in
  Statistical and Probabilistic Mathematics}.
\newblock Cambridge University Press, Cambridge, 2010.

\bibitem{walt1}
F.~Ndayiragije and W.~Van~Assche.
\newblock Multiple {M}eixner polynomials and non-{H}ermitian oscillator
  {H}amiltonians.
\newblock {\em J. Phys. A}, 46(50):505201, 17, 2013.

\bibitem{vv5}
Fran\c{c}ois Ndayiragije and Walter Van~Assche.
\newblock Asymptotics for the ratio and the zeros of multiple {C}harlier
  polynomials.
\newblock {\em J. Approx. Theory}, 164(6):823--840, 2012.

\bibitem{Nik79}
E.M. Nikishin.
\newblock A system of {M}arkov functions.
\newblock {\em Vestnik Moskovskogo Universiteta Seriya 1, Matematika
  Mekhanika}, 34(4):60--63, 1979.
\newblock Translated in \emph{Moscow University Mathematics Bulletin} 34(4),
  63--66, 1979.

\bibitem{Nik80}
E.M. Nikishin.
\newblock On simultaneous {P}ad\'e approximants.
\newblock {\em Mat. Sb.}, 113:499--519, 1980.
\newblock English transl. in {\it {M}ath. {USSR} {S}b.} 41:409--425, 1982.

\bibitem{NikishinSorokin}
E.M. Nikishin and V.N. Sorokin.
\newblock {\em Rational Approximation and Orthogonality}, volume~92 of {\em
  Translations of Math. Monographs}.
\newblock Amer. Math. Soc., Providence, RI, 1991.

\bibitem{Nut84}
J.~Nuttall.
\newblock Asymptotics of diagonal {H}ermite-{P}ad\'e polynomials.
\newblock {\em J. Approx. Theory}, 42(4):299--386, 1984.

\bibitem{Pade92}
H.~Pad\'e.
\newblock Sur la repr\'esentation approch\'ee d'une fonction par des fractions
  rationnelles.
\newblock {\em Ann. Sci Ecole Norm. Sup.}, 9(3):3--93, 1892.

\bibitem{MR3807896}
E.A. Rakhmanov.
\newblock Zero distribution for {A}ngelesco {H}ermite--{P}ad\'e polynomials.
\newblock {\em Uspekhi Mat. Nauk}, 73(3(441)):89--156, 2018.

\bibitem{MR3137137}
E.A. Rakhmanov and S.P. Suetin.
\newblock Distribution of zeros of {H}ermite-{P}ad\'e polynomials for a pair of
  functions forming a {N}ikishin system.
\newblock {\em Mat. Sb.}, 204(9):115--160, 2013.

\bibitem{Soro}
V.~N. Sorokin.
\newblock Generalization of classical orthogonal polynomials and convergence of
  simultaneous {P}ad\'e approximants.
\newblock {\em Trudy Sem. Petrovsk.}, (11):125--165, 245, 247, 1986.

\bibitem{Ismail}
W.~{Van Assche}.
\newblock {\em Chapter 23, Classical and Quantuum Orthogonal Polynomials in One
  Variable (by M.E.H. Ismail)}, volume~98 of {\em Encyclopedia of Mathematics
  and its Applications}.
\newblock Cambridge University Press, 2005.

\bibitem{VA11}
W.~{Van Assche}.
\newblock Nearest neighbor recurrence relations for multiple orthogonal
  polynomials.
\newblock {\em J. Approx. Theory}, 163:1427--1448, 2011.

\bibitem{MR3489551}
W.~{Van Assche}.
\newblock Ratio asymptotics for multiple orthogonal polynomials.
\newblock In {\em Modern trends in constructive function theory}, volume 661 of
  {\em Contemp. Math.}, pages 73--85. Amer. Math. Soc., Providence, RI, 2016.

\bibitem{vv4}
Walter Van~Assche, Galina Filipuk, and Lun Zhang.
\newblock Multiple orthogonal polynomials associated with an exponential cubic
  weight.
\newblock {\em J. Approx. Theory}, 190:1--25, 2015.

\bibitem{Y16}
M.~Yattselev.
\newblock Strong asymptotics of {H}ermite-{P}ad\'e approximants for {A}ngelesco
  systems.
\newblock {\em Canad. J. Math.}, 68(5):1159--1200, 2016.
\newblock \url{http://dx.doi.org/10.4153/CJM-2015-043-3}.

\end{thebibliography}

\end{document}